\documentclass[12pt]{article}
\usepackage{float}       
\usepackage{placeins}    
\usepackage{caption}     
\usepackage{tikz}
\usepackage{enumerate}
\usepackage[all,cmtip]{xy}
\usetikzlibrary{arrows,matrix,positioning}
\usetikzlibrary{arrows.meta}
\usepackage{amsmath}
\usepackage{xcolor}
\usepackage{tikz}
\usetikzlibrary{arrows.meta}

\definecolor{routegray}{gray}{0.82}

\usepackage{amsmath}
\usepackage{amssymb}
\usepackage[table]{xcolor}
\usepackage{ytableau}

\definecolor{routegray}{gray}{0.82}

\ytableausetup{
  boxsize=1.8em,
  centertableaux
}
\newcommand{\deletedbox}{*(gray!55)\phantom{0}}
\newcommand{\jdtbox}{*(black)\phantom{0}}

\usepackage{soul}
\tikzset{>=stealth',
  head/.style = {fill = white, text=black},
  plaque/.style = {draw, rectangle, minimum size = 10mm},
  pil/.style={->,thick},
  junct/.style = {draw,circle,inner sep=0.5pt,outer sep=0pt, fill=black}
  }

\usepackage[english]{babel}
\usepackage{ytableau}
\usepackage[a4paper,top=2cm,bottom=2cm,left=3cm,right=3cm,marginparwidth=1.75cm]{geometry}

\usepackage{amssymb}
\usepackage{amsmath}
\usepackage[title]{appendix}

\usepackage{siunitx}
\PassOptionsToPackage{hyphens}{url}\usepackage[colorlinks=true, pdfstartview=FitV, linkcolor=blue, citecolor=blue, urlcolor=blue]{hyperref}
\usepackage{cleveref}
\usepackage[utf8]{inputenc}
\usepackage{csquotes}
\usepackage{booktabs}
\usepackage{longtable}
\usepackage{adjustbox}
\usepackage{array}
\usepackage{url}
\usepackage{titlesec}
\usepackage{authblk}
\usepackage{xcolor} 
\usepackage{mathtools}
\numberwithin{equation}{section}

\newcommand{\ctab}[1]{\vcenter{\hbox{#1}}}

\usepackage{amsthm}

\newtheoremstyle{normalnote}
  {3pt}   
  {3pt}   
  {\itshape} 
  {}      
  {\bfseries} 
  {.}     
  {.5em}  
  {\thmname{#1}\thmnumber{ #2}\thmnote{ {\normalfont(#3)}}}

\theoremstyle{normalnote}
\newtheorem{theorem}{Theorem}[section]
\newtheorem{lemma}[theorem]{Lemma}
\newtheorem{proposition}[theorem]{Proposition}
\newtheorem{corollary}[theorem]{Corollary}

\newenvironment{myproof}
{\noindent\textit{Proof.}}
{\hfill$\blacksquare$}
\titleformat{\subsection}
  {\mdseries\itshape\large} 
  {\thesubsection}{1em}{} 

\crefformat{figure}{#2Figure~#1#3}
\Crefformat{figure}{#2Figure~#1#3}
\crefformat{table}{#2Table~#1#3}
\Crefformat{table}{#2Table~#1#3}
\crefformat{section}{#2Section~#1#3}
\Crefformat{section}{#2Section~#1#3}

\usepackage{soul}
\usepackage{xcolor}
\newtheorem{definition}[theorem]{Definition}
\newtheorem{remark}[theorem]{Remark}%

\newtheorem{conjecture}[theorem]{Conjecture}

\DeclareMathOperator{\co}{co}

\DeclareMathOperator{\SSYT}{SSYT}
\DeclareMathOperator{\SVT}{SVT}
\DeclareMathOperator{\RPP}{RPP}
\DeclareMathOperator{\SVRPP}{SVRPP}
\DeclareMathOperator{\ceq}{ceq}
\DeclareMathOperator{\wt}{wt}
\DeclareMathOperator{\ex}{ex}

\DeclareMathOperator{\OS}{OS}

\def\l{{\lambda}}
\def\m{{\mu}}

\def\qed{{\hfill$\blacksquare$}}

\newcommand{\ircont}{\mathrm{ircont}}

\begin{document}

\begin{center}
{\Large\bf A  Lam--Postnikov--Pylyavskyy   inequality for hybrid Grothendieck polynomials}

\vskip 6mm
{\small   }
Peter L. Guo, Mingyang Kang, Jiaji Liu

\end{center}

\begin{abstract}
We prove  a multivariate Lam--Postnikov--Pylyavskyy type inequality for hybrid Grothendieck polynomials, unifying and refining   results for stable and dual stable Grothendieck polynomials    established   by  Chan--Chen--Pak--Soskin. 
We also conjecture  extensions   of  the    Lam--Postnikov--Pylyavskyy inequality  and   a  conjecture by Thomas--Yong to the (equivariant) Schubert and Grothendieck polynomial setting.


\end{abstract}

\section{Introduction}\label{sec:introduction}
\subsection{Lam--Postnikov--Pylyavskyy  inequality}

The Lam--Postnikov--Pylyavskyy (LPP) inequality \cite{LamPostnikovPylyavskyy2007} is a celebrated result  in the Schur positivity and log-concavity phenomenon. For partitions $\lambda=(\lambda_1\geq \lambda_2\geq \cdots)$ and $\mu=(\mu_1\geq \mu_2\geq \cdots)$,  define their join $\lambda\vee\mu$ and meet $\lambda\wedge\mu$ as 
\[
  (\lambda\vee\mu)_i=\max\{\lambda_i,\mu_i\},
  \qquad
  (\lambda\wedge\mu)_i=\min\{\lambda_i,\mu_i\}.
\]
Let $s_\lambda(\mathbf x)$ be the Schur function associated with $\lambda$. For two symmetric functions $f$ and $g$, write $f\leq_s g$ if $g-f$ is Schur positive, namely,
\[
g-f\in \sum_\lambda \mathbb R_{\ge0} \cdot s_\lambda(\mathbf x).
\]
The Schur LPP inequality states that 

\begin{theorem}[\text{\cite[Theorem 5]{LamPostnikovPylyavskyy2007}}]\label{llp-kk}
For   partitions $\lambda$ and $\mu$, we have 
\[
   s_\lambda(\mathbf x)s_\mu(\mathbf x)
   \leq_s
   s_{\lambda\vee\mu}(\mathbf x)s_{\lambda\wedge\mu}(\mathbf x).
\]
\end{theorem}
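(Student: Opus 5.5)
We would follow the original Lam--Postnikov--Pylyavskyy strategy and reduce Theorem~\ref{llp-kk} to a Schur positivity statement about Jacobi--Trudi type determinants, proved by an injection on families of nonintersecting lattice paths; the comparison with skew Schur functions makes this reduction transparent.

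Set $n=\max\{\ell(\lambda),\ell(\mu)\}$ and pad both partitions with zeros to length $n$. By Jacobi--Trudi, $s_\lambda=\det(h_{\lambda_i-i+j})$. The product $s_\lambda s_\mu$ equals the skew Schur function of the disjoint union $\lambda*\mu$ of the two shapes placed diagonally. Similarly $s_{\lambda\vee\mu}s_{\lambda\wedge\mu}$ is the skew Schur function of $(\lambda\vee\mu)*(\lambda\wedge\mu)$. The cleanest formulation is the LPP cell-transfer statement. Write the two shapes as skew shapes $\lambda/\emptyset$ and $\mu/\emptyset$. For skew shapes $\lambda/\nu$ and $\mu/\rho$, the cell transfer theorem asserts
\[
s_{\lambda/\nu}\,s_{\mu/\rho}\leq_s s_{(\lambda\vee\mu)/(\nu\vee\rho)}\,s_{(\lambda\wedge\mu)/(\nu\wedge\rho)}.
\]
Our theorem is the special case $\nu=\rho=\emptyset$.

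\textbf{Proof of cell transfer.} We expand both sides as Lindström--Gessel--Viennot sums in the noncommutative (Fomin--Greene) setting. Concretely, we work with the Lam--Postnikov--Pylyavskyy framework: Schur positivity of a symmetric function $f$ is equivalent to $f$ having nonnegative coefficients in the noncommutative Schur functions, when evaluated in the plactic-type algebra generated by operators $u_i$ satisfying the Knuth relations. Each product $s_{\lambda/\nu}s_{\mu/\rho}$ becomes a signed sum over pairs of path families, with each family consisting of $n$ paths from sources determined by $\nu$ (resp.\ $\rho$) to sinks determined by $\lambda$ (resp.\ $\mu$).

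We then define the following swap on pairs of families. Overlay the two families. For each index $i$, the $i$-th paths of the two families start at adjacent sources and end at adjacent sinks. Whenever they intersect, exchange their tails from the last intersection point onward. This sends a pair with endpoints $(\nu,\lambda),(\rho,\mu)$ to one with endpoints $(\nu\vee\rho,\lambda\vee\mu),(\nu\wedge\rho,\lambda\wedge\mu)$, and it preserves the weight. It is also sign-compatible with the Lindström--Gessel--Viennot cancellation.

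The upshot is that the difference of the two sides becomes a sum of "noncrossing-defect" terms, each a product of noncommutative $h$'s in the correct order. These are positive in the Fomin--Greene sense, since products of noncommutative complete homogeneous functions expand positively into noncommutative Schur functions.

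\textbf{Main obstacle.} The hard step is verifying that the tail-swap is compatible with the LGV involution simultaneously for all $n$ path pairs. Without this, the signs do not cancel to leave a manifestly positive remainder. The first approach we would try is the LPP trick of encoding both determinants as a single $2n\times 2n$ minor of a totally nonnegative matrix of noncommutative $h$'s. The inequality would then follow from a Temperley--Lieb / Kazhdan--Lusztig immanant positivity (Rhoades--Skandera) applied to the difference of two products of complementary minors. This reduces the combinatorics to the known positivity of Temperley--Lieb immanants.
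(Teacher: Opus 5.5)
The paper does not prove this statement itself. It quotes it from Lam--Postnikov--Pylyavskyy and recovers it only as the $\mathbf t=\mathbf w=\mathbf 0$ (i.e.\ $\zeta=\mathbf 0$) case of Theorem~\ref{main-1}. In that case every oscillating sequence is constant, the orchestra condition is a trivial count, and all of the Schur positivity comes from the Chan--Chen--Pak--Soskin Schur orchestra inequality (Theorem~\ref{cik-oo}). So your argument has to stand on its own, and it has a genuine gap.

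Passing to the cell-transfer statement is fine, but the tail swap does not do what you need. Exchanging tails (or taking upper and lower envelopes) of the $i$-th paths preserves weight only for commuting weights. At best it gives a weight-preserving injection on pairs of tableaux, i.e.\ \emph{monomial} positivity of $s_{\lambda\vee\mu}s_{\lambda\wedge\mu}-s_\lambda s_\mu$, which is far weaker than Schur positivity. In the plactic (Fomin--Greene) setting that actually detects Schur positivity, the weight of a pair is the concatenated word $w(P)\,w(Q)$. Swapping tails moves letters from one factor into the other, and the new word is in general not Knuth equivalent to the old one, so the map is not weight-preserving there. Two smaller problems: the sources $\nu_i-i$ and $\rho_i-i$ need not be adjacent, and a swap at index $i$ can create intersections with the paths of index $i\pm1$. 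Finally, your claim that the difference becomes a sum of ordered products of noncommutative $h$'s has no argument behind it.

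Your fallback is the right skeleton of the LPP proof, but both of its essential inputs are missing.

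(a) Write each product as a product of complementary minors $\Delta_{I,I'}(H)\,\Delta_{\overline I,\overline{I'}}(H)$ of one Jacobi--Trudi-type matrix $H$; this is not a single $2n\times 2n$ minor. By Rhoades--Skandera, each such product is the sum of Temperley--Lieb immanants over the noncrossing matchings compatible with the associated coloring. You must then prove that every matching compatible with the coloring for $(\lambda,\mu)$ is also compatible with the coloring for $(\lambda\vee\mu,\lambda\wedge\mu)$. This containment is the combinatorial heart of LPP, and you do not address it.

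(b) You need \emph{Schur} positivity of Temperley--Lieb immanants of Jacobi--Trudi matrices. This does not follow from total nonnegativity of these immanants on real matrices. In LPP it comes from Haiman's Schur positivity of Kazhdan--Lusztig immanants of Jacobi--Trudi matrices, via Rhoades--Skandera.

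With (a) and (b) supplied the argument goes through; without them the proposal does not prove the inequality.
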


For more thorough information about LPP type inequalities as well as general Schur positivity and log-concavity, we refer  to  the recent breakthrough  work by Speyer \cite{Speyer2026} and  Chan--Chen--Pak--Soskin \cite{CCPS}.

As an advanced  extension of the LPP inequality, Chan, Chen, Pak and Soskin \cite[Theorem 1.6]{CCPS} developed the Schur positivity version of the Ahlswede--Daykin inequality. As the main application, they resolved a Schur positivity conjecture  concerning stable Grothendieck polynomials   made  by  Mihalcea. Stable Grothendieck polynomials $G_\lambda(\mathbf{x})$   are the central  combinatorial  object  in the study of Schubert calculus in the K-theory of Grassmannians. A notable combinatorial construction of  $G_\lambda(\mathbf{x})$ was given   by  Buch \cite{Buch2002}   by using   set-valued tableaux.

Let  $\tilde{G}_\lambda(\mathbf{x})=(-1)^{|\lambda|}G_\lambda(-\mathbf{x})$  be the modification such that each monomial has nonnegative coefficient. The following was   conjectured by Mihalcea \cite{Mihalcea}.

\begin{theorem}[\text{\cite[Theorem 1.8]{CCPS}}]\label{thm-ek}
For partitions $\lambda$ and $\mu$, we have 
\[
   \widetilde{G}_\lambda(\mathbf x)\widetilde{G}_\mu(\mathbf x)
   \leq_s
   \widetilde{G}_{\lambda\vee\mu}(\mathbf x)\widetilde{G}_{\lambda\wedge\mu}(\mathbf x).
\]
\end{theorem}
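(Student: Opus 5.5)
The plan is to reduce the statement to a Schur-positivity version of the Ahlswede--Daykin (four functions) inequality on a distributive lattice, in the spirit of \cite[Theorem 1.6]{CCPS}. First I would expand $\widetilde G_\lambda$ in Schur functions. By Lenart's theorem, $\widetilde G_\lambda(\mathbf x)=\sum_{\nu\supseteq\lambda} a_{\lambda\nu}\, s_\nu(\mathbf x)$, where $a_{\lambda\nu}\ge 0$ is the number of increasing tableaux of skew shape $\nu/\lambda$ with row-strict and column-strict conditions as Lenart describes. Here $\nu/\lambda$ has at most $i-1$ boxes in row $i$, and the entries in row $i$ lie in $\{1,\dots,i-1\}$. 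The first concrete step is to encode $a_{\lambda\nu}$ as a sum over a distributive lattice $\mathcal L$ of ``decorated'' partitions: points of $\mathcal L$ record $\nu$ together with the filling, and we write $a_{\lambda\nu}=\sum_{\omega\in\mathcal L(\lambda,\nu)} 1$.

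The second step, which needs the most care in the encoding, is to show that the indicator function $F(\lambda,\omega)$ of ``$\omega$ is a valid Lenart filling over $\lambda$'' is log-supermodular jointly in $(\lambda,\omega)$. Concretely, if $\omega$ is valid over $\lambda$ and $\omega'$ is valid over $\mu$, then $\omega\vee\omega'$ should be valid over $\lambda\vee\mu$ and $\omega\wedge\omega'$ valid over $\lambda\wedge\mu$. Here joins and meets act coordinatewise on row lengths and on the entries read row by row. A convenient encoding stores each row as an integer vector of ``column positions of entry $k$'', so that the conditions become inequalities between coordinates. The lattice operations preserve such inequalities, since $\max$ and $\min$ of two solutions of a system of difference constraints are again solutions.

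Third, I would apply \cite[Theorem 1.6]{CCPS} (the Schur Ahlswede--Daykin inequality) with the four functions
\[
\alpha(\omega)=s_{\nu(\omega)}\,[\text{valid over }\lambda],\qquad \beta(\omega)=s_{\nu(\omega)}\,[\text{valid over }\mu],
\]
\[
\gamma(\omega)=s_{\nu(\omega)}\,[\text{valid over }\lambda\vee\mu],\qquad \delta(\omega)=s_{\nu(\omega)}\,[\text{valid over }\lambda\wedge\mu].
\]
The required pointwise hypothesis is $\alpha(\omega)\beta(\omega')\le_s \gamma(\omega\vee\omega')\,\delta(\omega\wedge\omega')$. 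Because $\nu(\omega\vee\omega')=\nu(\omega)\vee\nu(\omega')$ and $\nu(\omega\wedge\omega')=\nu(\omega)\wedge\nu(\omega')$, this follows from the support statement in step two together with Theorem~\ref{llp-kk}, applied as $s_{\nu}s_{\nu'}\le_s s_{\nu\vee\nu'}s_{\nu\wedge\nu'}$. Summing over the lattice then gives $\widetilde G_\lambda\widetilde G_\mu\le_s \widetilde G_{\lambda\vee\mu}\widetilde G_{\lambda\wedge\mu}$.

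The main obstacle is step two. Lenart's tableaux have conditions that depend on $\lambda$ (where the skew shape starts), and these must become monotone difference constraints in a lattice encoding where $\lambda$ and the filling move together. If the Lenart encoding resists this, I would fall back on a different formula: write $\widetilde G_\lambda$ as a Jacobi--Trudi type determinant, or as a sum of skew Schur functions $\sum_\nu s_{\nu/\lambda\text{-strip data}}$. I would then look for a variant where the row-by-row data are intervals in $\mathbb Z$, since max/min closure is automatic for those.
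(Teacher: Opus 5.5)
Your route is viable, but it is not the paper's. It is essentially the original Chan--Chen--Pak--Soskin argument: Lenart's expansion fed into the Ahlswede--Daykin--Schur inequality. The paper never proves this statement directly. It obtains it as the specialization $\mathbf t=\mathbf 0$, $\mathbf w=\mathbf 1$ of Theorem~\ref{main-1}: there $H_\lambda(\mathbf x;\mathbf 0,\mathbf 1)=\widetilde G_\lambda(\mathbf x)$, and positivity in $\mathbb R_{\ge0}[\mathbf t,\mathbf w]$ survives the specialization. The paper's Schur expansion comes from the dilation--contraction bijection with oscillating sequences. At $\mathbf t=\mathbf 0$ only sequences with trivial falling steps survive, i.e.\ chains of vertical strips. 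These are Lenart fillings after relabelling: a box of row $r$ added in the $k$-rising step gets entry $r-k$. So the underlying lattice is essentially yours. The real difference is the inequality. The paper does not quote AD--Schur but unpacks it: fibres over (join, meet), the orchestra inequality of Theorem~\ref{cor:straight-complementary-orchestra}, and the multivariate RLS inequality of Theorem~\ref{WRLS} with strip sizes as modular functions. That unpacking gives coefficientwise positivity in $\mathbf t,\mathbf w$, hence Corollary~\ref{k-0-p}. Your argument is shorter but yields only the unrefined statement.

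Step three must be corrected. As you formulate the tool --- an AD inequality for Schur-positive-valued functions under the pointwise hypothesis $\alpha(x)\beta(y)\le_s\gamma(x\vee y)\delta(x\wedge y)$ --- it is false. On $L=\{0<1\}$ take $\alpha=(s_1,s_1)$, $\beta=(1+s_1^2,\,1+s_1)$, $\gamma=(s_1,\,1+s_1)$, $\delta=(1+s_1^2,\,s_1)$. The four pointwise differences are $0$, $1+s_1^3$, $1+s_1^2$, $0$, yet $\gamma(L)\delta(L)-\alpha(L)\beta(L)=1-s_1+s_1^2$. So pointwise LPP is not what drives the argument. What is actually needed is:
\begin{itemize}
\item[(i)] nonnegative scalar weights obeying the classical AD condition --- here the indicators of the fibres $\{\lambda(\omega)=\lambda\}$, $\{\lambda(\omega)=\mu\}$, $\{\lambda(\omega)=\lambda\vee\mu\}$, $\{\lambda(\omega)=\lambda\wedge\mu\}$, which is exactly your support statement;
\item[(ii)] the map $\omega\mapsto\nu(\omega)$ being a lattice homomorphism into Young's lattice.
\end{itemize}
Schur positivity is then produced fibrewise by the orchestra inequality, as in Steps 2--3 of Section~\ref{sec:schur-positive}. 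You assert both (i) and (ii), so the proof goes through once the tool is invoked in this form.

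Step two is then routine, provided you use cumulative counts rather than positions of individual entries. An entry may be missing from a row, and box-by-box $\max/\min$ of entries is meaningless when shapes differ. Fill the boxes of $\lambda$ with $0$ and set $p_{r,k}=\#\{j:(r,j)\in\nu,\ T(r,j)\le k\}$ for $0\le k\le r-1$. Then $p_{r,0}=\lambda_r$ and $p_{r,r-1}=\nu_r$. Lenart's conditions become exactly $0\le p_{r,k}-p_{r,k-1}\le 1$ and $p_{r+1,k}\le p_{r,k-1}$ for $1\le k\le r$; these also force $\lambda$ and $\nu$ to be partitions. They are difference constraints, so the arrays form a distributive lattice under coordinatewise $\max$ and $\min$. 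Since $\lambda$ and $\nu$ are coordinate projections, the support statement is automatic and no fallback is needed.

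Finally, Lenart's sums are infinite, while AD--Schur concerns finite lattices. Compare degree-$d$ components after restricting to arrays with at most $d$ rows and $p_{1,0}\le\max\{\lambda_1,\mu_1\}$; every entry is then bounded by $\nu_1=\lambda_1$. This is what the paper does with $\OS^{l,m}_k$.
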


Recall that the Hall inner product on symmetric functions is defined by letting $<s_\lambda, s_\mu>=\delta_{\lambda, \mu}$. The orthogonal  dual to  $G_\lambda(\mathbf{x})$, denoted $g_\lambda(\mathbf{x})$, was systematically  investigated  by   Lam and Pylyavskyy \cite{LamPylyavskyy2007} which can be combinatorially generated by  reverse plane partitions. The following dual stable  Grothendieck LPP inequality  is another application of the Ahlswede--Daykin--Schur inequality.

\begin{theorem}[\text{\cite[Theorem 1.10]{CCPS}}]\label{thm-ek-d}
For  partitions $\lambda$ and $\mu$, we have 
\[
   g_\lambda(\mathbf x)g_\mu(\mathbf x)
   \leq_s
   g_{\lambda\vee\mu}(\mathbf x)g_{\lambda\wedge\mu}(\mathbf x).
\]
\end{theorem}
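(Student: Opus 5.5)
We prove $g_\lambda g_\mu \leq_s g_{\lambda\vee\mu}g_{\lambda\wedge\mu}$ by expanding both sides in the Schur basis and reducing to the Schur LPP inequality, Theorem~\ref{llp-kk}. The key input is the Lenart-type expansion of dual stable Grothendieck polynomials into Schur functions. By a result of Lam--Pylyavskyy,
\[
g_\lambda(\mathbf x)=\sum_{\nu\subseteq\lambda} c_{\lambda\nu}\, s_\nu(\mathbf x),\qquad c_{\lambda\nu}\geq 0,
\]
where $c_{\lambda\nu}$ counts certain elegant fillings of $\lambda/\nu$. These are strict-column, weak-row fillings whose entries in row $i$ lie in $\{1,\dots,i-1\}$. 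Equivalently, $c_{\lambda\nu}=\prod$-type counts that are determinants. Writing $f_\lambda(\nu)=c_{\lambda\nu}$, the claim becomes
\[
\sum_{\alpha,\beta} f_\lambda(\alpha)f_\mu(\beta)\, s_\alpha s_\beta\ \leq_s\ \sum_{\gamma,\delta} f_{\lambda\vee\mu}(\gamma) f_{\lambda\wedge\mu}(\delta)\, s_\gamma s_\delta .
\]

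The strategy follows Chan--Chen--Pak--Soskin's Ahlswede--Daykin--Schur framework. The partitions form a distributive lattice under $\vee,\wedge$, and the map $\nu\mapsto s_\nu$ satisfies the Schur LPP inequality $s_\alpha s_\beta\leq_s s_{\alpha\vee\beta}s_{\alpha\wedge\beta}$. An Ahlswede--Daykin (four functions) type statement valued in Schur-positive functions then yields the claim, provided the weight functions satisfy
\[
f_\lambda(\alpha)\,f_\mu(\beta)\ \leq\ f_{\lambda\vee\mu}(\alpha\vee\beta)\,f_{\lambda\wedge\mu}(\alpha\wedge\beta)
\]
for all $\alpha,\beta$. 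I will take the Ahlswede--Daykin--Schur inequality \cite[Theorem 1.6]{CCPS} as the black box. Its proof, as I envision it, uses the classical induction on the lattice of shapes, and Schur LPP enters at each merge step in place of the usual real-number inequality. Both sides are finite sums over sub-partitions of $\lambda\vee\mu$, so the restriction to a finite distributive lattice is harmless.

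The main obstacle is verifying the four-function condition for the coefficients $c_{\lambda\nu}$. I would try to obtain it from a scalar LPP/FKG-type inequality for the elegant-filling counts, ideally by an injection. Given a pair of fillings of $\lambda/\alpha$ and $\mu/\beta$, I would superimpose them row by row. In each row I would take the max/min of the boundary positions together with the corresponding entries, producing fillings of $(\lambda\vee\mu)/(\alpha\vee\beta)$ and $(\lambda\wedge\mu)/(\alpha\wedge\beta)$. Checking that this is well defined and injective is the technical heart.

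If the direct combinatorial merge fails, I would use the alternative that elegant-filling counts are given by a Lindström--Gessel--Viennot determinant of path counts. The row constraints make the relevant path system planar, and then I would apply the known lattice-path LPP/total-positivity argument. Because the row bounds $\{1,\dots,i-1\}$ depend only on the row index and not on the shape, row-wise max/min should commute with the constraint, which is why I expect this to work.
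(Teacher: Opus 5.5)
\textbf{Verdict: genuine gap.} Your reduction is sound as far as it goes. The Lam--Pylyavskyy elegant-filling expansion is correct, and once the pointwise condition $f_\lambda(\alpha)f_\mu(\beta)\le f_{\lambda\vee\mu}(\alpha\vee\beta)\,f_{\lambda\wedge\mu}(\alpha\wedge\beta)$ is known for all partitions $\alpha,\beta$, the Ahlswede--Daykin--Schur inequality of Chan--Chen--Pak--Soskin finishes the proof. But that condition is the entire content of the argument beyond the black box, and the route you propose for it fails.

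Encode an elegant filling of $\lambda/\alpha$ by the chain $\alpha=\kappa^0\subseteq\kappa^1\subseteq\cdots\subseteq\kappa^m=\lambda$, where $\kappa^j$ is $\alpha$ together with the boxes whose entries are $\le j$. Your row-wise superposition is then the coordinatewise $\max/\min$ of these chains. It is well defined: horizontal strips and the row condition $\kappa^j_i=\kappa^m_i$ for $i\le j+1$ both survive, exactly as in Lemma \ref{lemn-nb-0}. However, it is not injective. Take $\lambda=\mu=(2,2,2,2)$ and $\alpha=\beta=(2,2,1)$. Let $F$ have entries $1$ in row $3$ and $3,3$ in row $4$, and let $F'$ have $2$ in row $3$ and $1,3$ in row $4$. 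These fillings are incomparable, and the pairs $(F,F')$, $(F',F)$ and $(F\vee F',F\wedge F')$ all have the same image. The map $(x,y)\mapsto(x\vee y,x\wedge y)$ forgets which coordinate came from which factor, so injectivity cannot be expected; the ``technical heart'' you identify is false. The LGV fallback is not a substitute either. The row bounds make the count a flagged Jacobi--Trudi determinant with row-dependent entries. The Toeplitz cell-transfer argument of Lam--Postnikov--Pylyavskyy does not cover that without a planar-network construction you have not supplied.

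The missing idea is to use the lattice through the Ahlswede--Daykin inequality rather than an injection. Fix $m$ and a bounding box large enough for $\lambda$ and $\mu$; the chains above then form a finite distributive lattice. Let $X$ be the chains from $\alpha$ to $\lambda$ and $Y$ those from $\beta$ to $\mu$. Then $X\vee Y$ lies in the chain set for $(\alpha\vee\beta,\lambda\vee\mu)$, and $X\wedge Y$ lies in the chain set for $(\alpha\wedge\beta,\lambda\wedge\mu)$. The classical Ahlswede--Daykin inequality (Theorem \ref{MAD} with all four functions identically $1$ and $\mathbf q=\mathbf 1$) gives $|X|\,|Y|\le|X\vee Y|\,|X\wedge Y|$, which is exactly your four-function condition. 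With this repair your argument becomes the original Chan--Chen--Pak--Soskin proof.

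The paper instead obtains the statement as the specialization $\mathbf w=\mathbf 0$, $\mathbf t=\mathbf 1$ of Theorem \ref{main-1}. There, only oscillating sequences with empty vertical strips contribute, and these are precisely the chains encoding elegant fillings. Rather than passing through a pointwise inequality on partitions, the paper feeds this lattice directly into Theorem \ref{WRLS} and Theorem \ref{cor:straight-complementary-orchestra}, fiber by fiber. That is the form which survives the multivariate weights $\mathbf t,\mathbf w$.
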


\subsection{Main result}

The hybrid Grothendieck polynomial  $H_\lambda(\mathbf{x}; \mathbf{t}, \mathbf{w})$ with two families of  parameters $\mathbf{t}$ and $\mathbf{w}$ is the generating function 
\[
 H_{\lambda}(\mathbf{x};\mathbf{t},\mathbf{w})
  =
  \sum_{T\in\SVRPP(\lambda)}
\mathbf{x}^{\ircont(T)}\mathbf{t}^{\ceq(T)}\mathbf{w}^{\ex(T)}
\]
over set-valued reverse plane partitions of shape $\lambda$. This class of symmetric functions was introduced by the authors \cite{GuoKangLiu2025} originally in order to unify stable and dual stable Grothendieck polynomials. More specifically,   

\begin{remark}\label{rmk-agh}
We have  the following specializations.
 \begin{itemize}

\item   $t_i=0$ for $i\geq 1$:    ${H}_{\lambda}(\mathbf{x};\mathbf{0},\mathbf{w})$ is   the refined stable Grothendieck polynomial $G_\lambda(\mathbf x; \mathbf w)$  defined  by Chan  and Pflueger \cite{ChanPflueger2021}. If  further setting  every  $w_i=-1$, then ${H}_{\lambda}(\mathbf{x};\mathbf{0},\mathbf{-1})$ is  equal to $G_\lambda(\mathbf{x})$. Note also that  ${H}_{\lambda}(\mathbf{x};\mathbf{0},\mathbf{1})=\widetilde{G}_\lambda(\mathbf x)$.

\item   $w_i=0$ for $i\geq 1$:     ${H}_{\lambda}(\mathbf{x};\mathbf{t},\mathbf{0})$ is  the refined dual stable Grothendieck polynomial $g_\lambda(\mathbf x;\mathbf t)$  defined   by Galashin, Grinberg and Liu \cite{GalashinGrinbergLiu2016}. If further setting  every  $t_i=1$, then ${H}_{\lambda}(\mathbf{x};\mathbf{1},\mathbf{0})$ is   equal to $g_\lambda(\mathbf{x})$.

\end{itemize}   
\end{remark}
Though known properties suggest that  $H_\lambda(\mathbf{x}; \mathbf{t}, \mathbf{w})$ possesses remarkable properties, resembling those owned by stable and dual stable Grothendieck polynomial, some problems and conjectures are still open  in the hybrid setting \cite{GuoKangLiu2025}.

We    extend the ordinary  Schur positivity to a polynomial level. Suppose that $f$ and $g$ are symmetric functions in $\mathbf{x}$, with coefficients being polynomials in $\mathbf{z}=(z_1,z_2,\ldots)$. 
Write $f\leq_{s}^{\mathbf z}g$
if
\[
g-f\in \sum_\lambda \mathbb R_{\ge0}[\mathbf z] \cdot s_\lambda(\mathbf x).
\]

Our main result is stated as follows.  

\begin{theorem}\label{main-1}
 For partitions $\lambda$ and $\mu$, we have   \begin{equation}\label{eq:intro-Hp-lpp}
H_\lambda(\mathbf{x};\mathbf{t}, \mathbf{w})
   H_\mu(\mathbf{x};\mathbf{t},\mathbf{w})
   \leq_{s}^{\mathbf t,\mathbf w}
   H_{\lambda\vee\mu}(\mathbf{x};\mathbf{t}, \mathbf{w})
   H_{\lambda\wedge\mu}(\mathbf{x};\mathbf{t},\mathbf{w}).
\end{equation} 
\end{theorem}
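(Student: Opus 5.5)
Our plan is to reduce \eqref{eq:intro-Hp-lpp} to the Schur LPP inequality (Theorem~\ref{llp-kk}) via a Schur expansion of $H_\lambda$ whose coefficients are products, or at least monotone functions, of data coming from the two partitions. The key input I would use, presumably established in \cite{GuoKangLiu2025}, is a positive Schur expansion
\[
H_\lambda(\mathbf x;\mathbf t,\mathbf w)=\sum_{\nu} c_{\lambda\nu}(\mathbf t,\mathbf w)\, s_\nu(\mathbf x),\qquad c_{\lambda\nu}\in\mathbb R_{\ge0}[\mathbf t,\mathbf w].
\]
The coefficient $c_{\lambda\nu}$ is given by a flagged/row-bounded tableau model. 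I would refine it to a decomposition of the form
\[
H_\lambda=\sum_{\alpha}\Big(\prod_i \phi_i(\lambda_i,\alpha_i)\Big)\, s_{\lambda+\alpha}
\]
or, more realistically, a determinantal (Jacobi--Trudi type) formula
\[
H_\lambda=\det\big(h^{(i)}_{\lambda_i-i+j}\big),
\]
where the row-dependent entries involve $t_i,w_i$ attached to row $i$. This is the analogue of the Jacobi--Trudi formulas for $G_\lambda$ and $g_\lambda$ used in CCPS.

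With such a formula in hand, I would follow the strategy of Chan--Chen--Pak--Soskin. Their Ahlswede--Daykin--Schur inequality \cite[Theorem 1.6]{CCPS} says roughly the following. Suppose $f_1,f_2,f_3,f_4$ are functions from a distributive lattice of partitions to symmetric functions, and the pointwise condition
\[
f_1(\alpha)f_2(\beta)\le_s f_3(\alpha\vee\beta)f_4(\alpha\wedge\beta)
\]
holds. Then the same inequality holds after summing over ideals/filters. I would write
\[
H_\lambda=\sum_{\nu\supseteq\lambda}a(\lambda,\nu)\,s_\nu
\]
(for the $\mathbf w$ part, set-valued tableaux enlarge the shape) composed with a "shrinking" part coming from the $\mathbf t$ part; reverse plane partitions correspond to $s_\nu$ with $\nu\subseteq\lambda$ in the dual expansion. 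The pointwise inequality is then supplied by Theorem~\ref{llp-kk} applied to $s_\nu s_\rho$ together with a polynomial-coefficient check
\[
a(\lambda,\nu)\,a(\mu,\rho)\le a(\lambda\vee\mu,\nu\vee\rho)\,a(\lambda\wedge\mu,\nu\wedge\rho)
\]
in the coefficientwise order on $\mathbb R_{\ge0}[\mathbf t,\mathbf w]$. The statement over $\mathbb R_{\ge0}[\mathbf z]$ rather than $\mathbb R_{\ge0}$ should be harmless: the CCPS argument is an injection/monotonicity argument valid over any ordered semiring, in particular coefficientwise over monomials.

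The main obstacle is the mixed nature of $H_\lambda$. It is neither purely "growing" (stable) nor purely "shrinking" (dual stable), so one must find a single lattice-indexed positive expansion to which the Ahlswede--Daykin mechanism applies. I would first try to express $H_\lambda$ as a two-step expansion
\[
H_\lambda=\sum_{\kappa\subseteq\lambda}\ \sum_{\nu\supseteq\kappa} A(\lambda,\kappa;\mathbf t)\,B(\kappa,\nu;\mathbf w)\, s_\nu,
\]
with $A$ and $B$ row-factorized, for instance $A=\prod_i t$-weights depending only on $(\lambda_i,\kappa_i,i)$. Row-factorized weights make the log-supermodularity inequality for $A$ and $B$ reduce to a rowwise check $\{x,y\}\mapsto\{\max,\min\}$, which is automatic for products of single-row functions. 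I would then apply \cite[Theorem 1.6]{CCPS} twice, once for each layer.

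If the row-factorization fails because of the interlacing constraints between rows, the fallback is the following. Specialize to verify the claim on the extreme cases (Theorems~\ref{thm-ek} and~\ref{thm-ek-d}). Then treat the interlacing conditions as restrictions to a sublattice, checking that these restrictions are closed under $\vee,\wedge$. This closure check is exactly where I expect the real work to lie.
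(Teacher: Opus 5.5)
Your proposal has a genuine gap. It never supplies the structure everything depends on: a Schur expansion of $H_\lambda$ whose coefficients are generating functions over a finite distributive lattice with modular statistics.

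\textbf{The missing lattice model.} The candidates you list are speculative and do not work as stated.
\begin{itemize}
\item No product or Jacobi--Trudi formula for $H_\lambda$ is available.
\item The known tableau formula for $K_{\lambda,\mu}(\mathbf t,\mathbf w)$, via lattice-word set-valued reverse plane partitions, has no lattice structure. This is exactly the obstruction the paper must overcome.
\item Your two-layer ansatz (shrink by $\mathbf t$, then grow by $\mathbf w$, with row-factorized weights) does not match the true structure.
\end{itemize}
The paper builds a dilation--contraction bijection. Dilation RSK-inserts the non-minimal entries of row $k$ into the rows below. Contraction jeu-de-taquin slides out the boxes of row $k$ whose entry equals the maximum of the set directly above. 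This shows that $K_{\lambda,\mu}$ counts $\ell(\lambda)$-oscillating sequences: for $k=\ell(\lambda),\ldots,1$, one alternately adds a vertical strip strictly below row $k$ (weight $w_k$ per box) and removes a horizontal strip strictly below row $k-1$ (weight $t_{k-1}$ per box). These $2\ell(\lambda)-1$ interleaved steps, with shrinking frozen regions, do not collapse into one shrinking layer followed by one growing layer.

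Your claim that row-factorized weights are automatically log-supermodular is also false once a factor depends on two coordinates such as $(\lambda_i,\kappa_i)$. That is a genuine condition on each factor, and the strip conditions couple adjacent rows anyway. What is actually needed, and proved in the paper, is twofold: componentwise join and meet of oscillating sequences are again oscillating sequences, and the strip sizes are modular functions on this lattice.

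\textbf{The multivariate step is not harmless.} The Ahlswede--Daykin--Schur inequality of \cite{CCPS} works with real weights, so it cannot simply be transported to $\mathbb R_{\ge0}[\mathbf t,\mathbf w]$.
\begin{itemize}
\item Specializing $\mathbf t,\mathbf w$ to positive reals gives Schur positivity only at each positive point, not coefficients in $\mathbb R_{\ge0}[\mathbf t,\mathbf w]$.
\item You cannot apply it monomial by monomial. The coefficient $[\mathbf q^\zeta]$ of a product is a convolution over $\mathbf a+\mathbf b=\zeta$, not a product of a function of $\nu$ and a function of $\rho$.
\item The real proofs of Ahlswede--Daykin-type inequalities divide by positive quantities, so they do not transfer to coefficientwise inequalities between arbitrary polynomial-valued functions.
\item ``Applying the inequality twice'' does not compose, since the intermediate sums over $\kappa$ are not Schur functions.
\item Pointwise Schur LPP for $s_\nu s_\rho$ cannot drive the argument. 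The map $(\nu,\rho)\mapsto(\nu\vee\rho,\nu\wedge\rho)$ is not injective, and absorbing that non-injectivity is exactly the job of the Schur orchestra inequality.
\end{itemize}

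The paper handles this as follows.
\begin{itemize}
\item It fixes a monomial $\mathbf q^\zeta$, using one variable per step of the oscillating sequence.
\item It splits the coefficient $\Delta_\zeta$ into fibres $\{(\nu,\gamma):\nu\vee\gamma=\rho,\ \nu\wedge\gamma=\sigma\}$.
\item It applies a straight-shape variant of the Schur orchestra inequality to each fibre.
\item The required orchestra inequalities, for all $\zeta$ at once, follow from the polynomial inequality $\sum_{H\subseteq[t]}K_{\lambda,\nu_H}K_{\mu,\nu_{\overline H}}\le^{\mathbf q}\sum_{H\subseteq[t]}K_{\lambda\vee\mu,\nu_H}K_{\lambda\wedge\mu,\nu_{\overline H}}$.
\item That inequality is a multivariate Reuter--Lov\'asz--Saks inequality. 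It is deduced from Chan--Pak's multivariate Ahlswede--Daykin inequality on $L\times 2^{[t]}$ by extracting the coefficient of $z_1\cdots z_t$. This works because the weights are monomials $\mathbf q^{\mathbf r(S)}$ with $\mathbf r$ modular.
\end{itemize}

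Your fallback of checking closure under $\vee$ and $\wedge$ points in the right direction. Without the oscillating-sequence model and this multivariate machinery, however, the argument does not go through.
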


By Remark \ref{rmk-agh}, we see that  Theorem \ref{main-1} unifies Theorems \ref{thm-ek} and \ref{thm-ek-d} for stable and dual stable Grothendieck polynomials.

Taking $\mathbf t=\mathbf 0$ in $H_\lambda(\mathbf{x}; \mathbf{t}, \mathbf{w})$  gives the refined stable Grothendieck polynomial $G_\lambda(\mathbf x; \mathbf w)$ of Chan  and Pflueger \cite{ChanPflueger2021}. In this case,  Theorem \ref{main-1} leads to a refinement  of Theorem \ref{thm-ek}.

\begin{corollary}[refined version of Theorem \ref{thm-ek}]\label{k-0-p}
 For partitions $\lambda$ and $\mu$, we have   \begin{equation*} 
G_\lambda(\mathbf{x};  \mathbf{w})
   G_\mu(\mathbf{x}; \mathbf{w})
   \leq_{s}^{\mathbf w}
  G_{\lambda\vee\mu}(\mathbf{x}; \mathbf{w})
   G_{\lambda\wedge\mu}(\mathbf{x}; \mathbf{w}).
\end{equation*} 
\end{corollary}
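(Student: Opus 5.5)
Corollary \ref{k-0-p} is a direct specialization of Theorem \ref{main-1}, so the plan is simply to substitute $\mathbf t=\mathbf 0$ into \eqref{eq:intro-Hp-lpp} and check that the positivity notion survives. By Theorem \ref{main-1} there are polynomials $c_\nu(\mathbf t,\mathbf w)\in\mathbb R_{\ge0}[\mathbf t,\mathbf w]$, only finitely many nonzero in each degree, with
\[
H_{\lambda\vee\mu}(\mathbf{x};\mathbf{t}, \mathbf{w})H_{\lambda\wedge\mu}(\mathbf{x};\mathbf{t},\mathbf{w})-H_\lambda(\mathbf{x};\mathbf{t}, \mathbf{w})H_\mu(\mathbf{x};\mathbf{t},\mathbf{w})=\sum_\nu c_\nu(\mathbf t,\mathbf w)\, s_\nu(\mathbf x).
\]

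I would then apply the ring homomorphism $t_i\mapsto 0$ for all $i$. It acts only on the coefficient ring, so it commutes with products and with Schur expansions. By Remark \ref{rmk-agh}, each $H_\kappa(\mathbf x;\mathbf 0,\mathbf w)$ equals $G_\kappa(\mathbf x;\mathbf w)$. Therefore the left-hand side becomes
\[
G_{\lambda\vee\mu}(\mathbf{x}; \mathbf{w})G_{\lambda\wedge\mu}(\mathbf{x}; \mathbf{w})-G_\lambda(\mathbf{x};\mathbf{w})G_\mu(\mathbf{x};\mathbf{w}),
\]
and the right-hand side becomes $\sum_\nu c_\nu(\mathbf 0,\mathbf w)s_\nu(\mathbf x)$.

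Setting some variables to zero in a polynomial with nonnegative coefficients leaves a polynomial with nonnegative coefficients. Hence each $c_\nu(\mathbf 0,\mathbf w)$ lies in $\mathbb R_{\ge0}[\mathbf w]$, which is exactly the relation $\leq_s^{\mathbf w}$.

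There is no genuine obstacle here; all the substance is in Theorem \ref{main-1}. The only point needing care is that the specialization is legitimate. Each coefficient of $s_\nu$ is a polynomial, so it can be evaluated at $\mathbf t=\mathbf 0$. Moreover, because the Schur functions are linearly independent, the Schur expansion of the specialized difference is the specialization of the Schur expansion. Finally, setting $\mathbf w=\mathbf 1$ in the corollary recovers Theorem \ref{thm-ek}, since $H_\lambda(\mathbf x;\mathbf 0,\mathbf 1)=\widetilde G_\lambda(\mathbf x)$; this shows the corollary genuinely refines that theorem.
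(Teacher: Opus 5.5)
Your proposal is correct and follows the paper's approach: the paper obtains Corollary~\ref{k-0-p} by setting $\mathbf t=\mathbf 0$ in Theorem~\ref{main-1}, using $H_\lambda(\mathbf x;\mathbf 0,\mathbf w)=G_\lambda(\mathbf x;\mathbf w)$. Your added checks are sound: the specialization acts coefficientwise on the Schur expansion, and setting variables to zero keeps the polynomial coefficients in $\mathbb R_{\ge 0}[\mathbf w]$.
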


Accordingly, taking   $\mathbf w=\mathbf 0$ in Theorem \ref{main-1}  yields  a LPP inequality for  the refined dual stable Grothendieck polynomial $g_\lambda(\mathbf x;\mathbf t)$ of Galashin, Grinberg and Liu \cite{GalashinGrinbergLiu2016}. 

\begin{corollary}[refined version of Theorem \ref{thm-ek-d}]\label{k-1-p}
 For  partitions $\lambda$ and $\mu$, we have   \begin{equation*} 
g_\lambda(\mathbf{x};\mathbf{t})
   g_\mu(\mathbf{x};\mathbf{t})
   \leq_{s}^{\mathbf t}
   g_{\lambda\vee\mu}(\mathbf{x};\mathbf{t})
   g_{\lambda\wedge\mu}(\mathbf{x};\mathbf{t}).
\end{equation*} 
\end{corollary}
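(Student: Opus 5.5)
The statement to prove is Corollary \ref{k-1-p}. It follows from Theorem \ref{main-1} by specialization, so the plan has two steps: substitute $\mathbf w=\mathbf 0$, then check that polynomial Schur positivity survives that substitution. Throughout we take Theorem \ref{main-1} as given.

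First I will specialize. By Remark \ref{rmk-agh}, setting $w_i=0$ for all $i\geq 1$ turns $H_\nu(\mathbf x;\mathbf t,\mathbf w)$ into $g_\nu(\mathbf x;\mathbf t)$ for every partition $\nu$. This specialization applies to each of the four shapes $\lambda,\mu,\lambda\vee\mu,\lambda\wedge\mu$. The justification is that the hybrid polynomial is a generating function over $\SVRPP(\nu)$ with weight $\mathbf x^{\ircont(T)}\mathbf t^{\ceq(T)}\mathbf w^{\ex(T)}$. Setting $\mathbf w=\mathbf 0$ kills every $T$ with $\ex(T)\neq 0$. The surviving objects are exactly the reverse plane partitions of Galashin--Grinberg--Liu, with the same $\mathbf x$ and $\mathbf t$ weights.

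Next I will write the conclusion of Theorem \ref{main-1} in coordinates:
\[
H_{\lambda\vee\mu}H_{\lambda\wedge\mu}-H_\lambda H_\mu=\sum_\nu c_\nu(\mathbf t,\mathbf w)\, s_\nu(\mathbf x),
\qquad c_\nu\in\mathbb R_{\ge0}[\mathbf t,\mathbf w].
\]
The expansion exists because each $H_\nu$ is symmetric in $\mathbf x$ (shown in \cite{GuoKangLiu2025}), and Theorem \ref{main-1} says every coefficient $c_\nu$ has nonnegative coefficients as a polynomial in $(\mathbf t,\mathbf w)$.

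Then I will substitute $\mathbf w=\mathbf 0$ on both sides. The substitution is a ring homomorphism on coefficients and leaves the Schur functions unchanged. So the left side becomes $g_{\lambda\vee\mu}(\mathbf x;\mathbf t)g_{\lambda\wedge\mu}(\mathbf x;\mathbf t)-g_\lambda(\mathbf x;\mathbf t)g_\mu(\mathbf x;\mathbf t)$, and the right side becomes $\sum_\nu c_\nu(\mathbf t,\mathbf 0)s_\nu(\mathbf x)$. Setting some variables to zero in a polynomial with nonnegative coefficients simply deletes monomials, so each $c_\nu(\mathbf t,\mathbf 0)$ lies in $\mathbb R_{\ge0}[\mathbf t]$. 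This is exactly $\leq_s^{\mathbf t}$.

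There is one subtle point: the identification of $H_\nu(\mathbf x;\mathbf t,\mathbf 0)$ with $g_\nu(\mathbf x;\mathbf t)$ must hold as formal power series in infinitely many variables $\mathbf x$. This is supplied by Remark \ref{rmk-agh}. Beyond that, the argument is routine.
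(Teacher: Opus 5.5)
Your proposal is correct and follows the paper's route exactly: the paper obtains this corollary by setting $\mathbf w=\mathbf 0$ in Theorem \ref{main-1}, with Remark \ref{rmk-agh} identifying $H_\nu(\mathbf x;\mathbf t,\mathbf 0)$ with $g_\nu(\mathbf x;\mathbf t)$. Your observation that setting variables to zero keeps each Schur coefficient in $\mathbb R_{\ge0}[\mathbf t]$ is the routine detail the paper leaves implicit.
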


Note that both Corollaries  \ref{k-0-p} and \ref{k-1-p} specialize to Theorem \ref{llp-kk} by taking $\mathbf w=\mathbf 0$ and $\mathbf t=\mathbf 0$, respectively.

Our proof of  Theorem \ref{main-1}  seems highly nontrivial.  The arguments for Theorems \ref{thm-ek} and \ref{thm-ek-d} in \cite{CCPS} do not seem to directly apply to Theorem \ref{main-1}. Specifically, we will encounter  two difficulties which will be briefly explained as follows.   Write 
\[
H_\lambda(\mathbf{x};\mathbf{t},\mathbf{w})
   =
   \sum_\mu K_{\lambda,\mu}(\mathbf{t},\mathbf{w})s_\mu(\mathbf{x}).
\]
\begin{itemize}
    \item  The proofs of Theorems \ref{thm-ek} and \ref{thm-ek-d} technically rely  on    tableau formulas for the coefficients in the Schur expansion of stable or dual stable Grothendieck polynomials \cite{CCPS}. There exists a combinatorial formula for the coefficients $K_{\lambda,\mu}(\mathbf{t},\mathbf{w})$   using the language of lattice words   \cite[Theorem 1.3]{GuoKangLiu2025}.  Unfortunately, this  formula for $K_{\lambda,\mu}(\mathbf{t},\mathbf{w})$ does not allow us to use similar arguments  as in the  proofs of Theorems \ref{thm-ek} and \ref{thm-ek-d}. The essential obstruction is that the lattice word model  lacks the structure of a distributive lattice. 

    \item Another key tool used to prove  Theorems \ref{thm-ek} and \ref{thm-ek-d} is  the  Ahlswede--Daykin--Schur inequality developed in  \cite[Theorem 1.6]{CCPS}. Intuitively, to acquire  a proof of Theorem \ref{main-1}, we need  a `multivariate' version of the  Ahlswede--Daykin--Schur inequality.  However, we do not find an appropriate multivariate extension   that is applicable to Theorem \ref{main-1}. 
\end{itemize}
In order to address  the above issues, we require substantial  machinery. 

\begin{itemize}
    \item We establish   a new formula for the coefficients $K_{\lambda,\mu}(\mathbf{t},\mathbf{w})$ in terms of oscillating sequences, see Corollary \ref{ciri-j0}. The  idea behind this is to incorporate the operations of  Bandlow and Morse \cite{Bandlow} where the RSK and jeu de taquin algorithms are used to   interpret the coefficients  in the Schur expansion of stable and dual stable Grothendieck polynomials. The advantage is that oscillating sequences naturally form a distributive lattice, see Corollary  \ref{cln-sl}, so that we could  be able to  use  the framework  that have been developed  by Chan, Chen, Pak and Soskin \cite{CCPS} to deal with correlation inequalities. 

    \item We use  a multivariate  Reuter--Lov\'asz--Saks type inequality on finite distributive lattices,  see Theorem  \ref{WRLS}, as well as  
    a variation of the Schur orchestra   inequality    by Chan, Chen, Pak and Soskin \cite[Theorem 5.2]{CCPS}, see  Theorem \ref{cor:straight-complementary-orchestra}. 
\end{itemize}

\begin{remark}
In private communications,  Swee Hong Chan pointed out that Theorem   \ref{WRLS} can be implied (not immediately obviously) by \cite[Claim 6.3]{CP23}, and  Theorem \ref{cor:straight-complementary-orchestra}  follows from  \cite[Theorem 7.14]{CCPS}. Since the statement in \cite[Theorem 7.14]{CCPS} is not needed in \cite{CCPS},  its proof is  omitted in \cite{CCPS}. For clarity and   self-containedness,   we will include  proofs of Theorems  \ref{WRLS}  and \ref{cor:straight-complementary-orchestra} in this paper.  
\end{remark}

\begin{remark}
 The Okounkov   inequality for Schur functions was conjectured by Okounkov \cite{Okounkov}, and proved by Lam, Postnikov and Pylyavskyy \cite[Theorem 4]{LamPostnikovPylyavskyy2007}: 
\[
   s_\lambda(\mathbf x)s_\mu(\mathbf x)
   \leq_s
   s_{\left\lceil(\lambda+\mu)/2\right\rceil}(\mathbf x)s_{\left\lfloor(\lambda+\mu)/2\right\rfloor}(\mathbf x).
\]
Combining Theorems \ref{thm-ek} and \ref{thm-ek-d} with Speyer's theorem \cite{Speyer2026} (see \cite[Theorem 3.5]{CCPS}),   Chan, Chen, Pak and Soskin \cite[Corollaries 9.5 and 10.3]{CCPS}  obtained the Okounkov   inequalities  for stable and dual stable Grothendieck polynomials.
We can also prove a   multivariate  Okounkov  inequality for hybrid Grothendieck polynomials. 
The proof needs additional techniques, and  will be presented in future work.     
\end{remark}

\subsection{Conjectured extensions of the LPP inequality}

In the Schubert calculus of the flag variety, Schur polynomials arise canonically as a subclass of Schubert polynomials, see Remark \ref{re-mno0-1}. 
It is therefore natural  to ask if there exists  a Schubert positivity extension   which    includes
the classical Schur LPP inequality     as a special case.  
Let $S_n$ denote the symmetric group of permutations of $\{1,2,\ldots, n\}$. Write  
$w=w(1)\cdots w(n)\in S_n$  in one-line notation. 
The
Schubert polynomials  $\mathfrak{S}_w(\mathbf x)$,   introduced by   Lascoux  and   Schützenberger \cite{lascoux1989fonctorialite}, represent   the Schubert classes   in  the  cohomology ring of the    flag variety.

To define  $\{\mathfrak{S}_w(\mathbf x)\colon  w\in S_n\}$, recall   the  divided difference operator  $\partial_i$ acting on  a polynomial $f(\mathbf{x})$ by letting  $\partial_i(f)=\frac{f- s_i\cdot f}{x_i-x_{i+1}},$ 
where $s_i\cdot f$ is obtained from $f$ by interchanging  the variables $x_i$ and $x_{i+1}$.
For $w_0=n\cdots 2 1$, set $\mathfrak{S}_{w_0} (\mathbf x)=x_1^{n-1}x_2^{n-2}\cdots x_{n-1}$.  
If $w\neq w_0$, then there must exist an index $1\leq i<n$ such that $w(i)<w(i+1)$.  Define $\mathfrak{S}_{w} (\mathbf x)=\partial_i \,\mathfrak{S}_{ws_i}(\mathbf x),$  where $ws_i$ is obtained from $w$ by swapping $w(i)$ and $w(i+1)$.   
This is independent of the choice of $i$ since the operators $\partial_i$ satisfy the Coxeter  relations.

The Schubert polynomials   $\mathfrak{S}_w(\mathbf x)$ where $w\in S_n$ are linearly independent, constituting  a basis 
of the space $\mathrm{span}_\mathbb R\{x_1^{a_1}\cdots x_n^{a_n}: 0\leq a_i\leq n-i\}$.  For
two polynomials $f$ and $g$,   write $f\leq_{\mathfrak S} g$
  if  $g-f$ is Schubert positive:
\[
g-f\in \sum_{w} \mathbb R_{\geq 0} \cdot \mathfrak S_w.
\]
An expected  Schubert LPP inequality would look  like 
\begin{equation}\label{eq-jo}
    \mathfrak S_u(\mathbf x)\mathfrak S_v(\mathbf x)
\leq_{\mathfrak S}
\mathfrak S_{u\vee v}(\mathbf x)\mathfrak S_{u\wedge v}(\mathbf x).
\end{equation}
The first task  is how  to define the  join and meet   of two permutations. To do this, we use the   Lehmer code (also called  inversion code) of $w$:
\[
c(w)=(c_1(w),\ldots,c_n(w)),\qquad
c_i(w)=\#\{j:i<j\leq n,\ w(i)>w(j)\}.
\]
It is easily seen that  $0\leq c_i(w)\leq n-i$, and conversely  every such integer vector determines
a   permutation in $S_n$.  
For $u,v\in S_n$, define their  join and
meet are permutations determined  by the following Lehmer codes:
\[
c_i(u\vee v)=\max\{c_i(u),c_i(v)\},\qquad
c_i(u\wedge v)=\min\{c_i(u),c_i(v)\}.
\]
Obviously, the above  are indeed valid Lehmer codes of length $n$, and so   
$u\vee v$ and $u\wedge v$ are permutations belonging to $S_n$.

\begin{remark}\label{re-mno0-1}
Let $w$ be a   $k$-Grassmannian, that is, it has at most one descent at position $k$. It is easy to check that 
$c_1(w) \leq \cdots\leq c_k(w)$ and $c_i(w)=0$ for $i>k$. Denote by $\lambda(w)=(c_k(w),\ldots, c_1(w))$   the associated partition. It is a classical  fact that
$\mathfrak S_w(\mathbf x)$ is the Schur polynomial $s_{\lambda(w)}(x_1,\ldots, x_k)$.  Notice that when   $u$ and $v$ are $k$-Grassmannians,  both $u\vee v$ and $u\wedge v$ are also $k$-Grassmannians, and moreover $\lambda(u\vee v)=\lambda(u)\vee \lambda(v)$ and $\lambda(u\wedge v)=\lambda(u)\wedge \lambda(v)$. Therefore, in such a case,  \eqref{eq-jo} will reduce to the LPP inequality in Theorem  \ref{llp-kk}.
\end{remark}

For general $u$ and $v$, we could immediately find examples that violate  \eqref{eq-jo}:  for instance $u=132$ and $v=213$. A reasonable route  is to  consider permutations whose Schubert polynomials are close to Schur polynomials. Arguably, vexillary permutations are such permutations: $w$ is {vexillary} if it avoids the pattern
$2143$, that is, there do not exist $i<j<k<\ell$ such that $w(j)<w(i)<w(\ell)<w(k)$. This class of permutations plays an important role in the combinatorics and geometry of  Schubert varieties and degeneracy loci,  see for example  Fulton \cite{Fulton-1},  Eriksson--Linusson \cite{EL} and  Knutson--Miller--Yong \cite{KMY-1}. 

Wachs' theorem \cite{Wachs1985} identifies the Schubert polynomial of a
vexillary permutation with a flagged Schur polynomial.  
Precisely, for $w\in S_n$, set
\[
I_i(w)=\{j:i<j\leq n,\ w(i)>w(j)\}.
\]
So we have $c_i(w)=\# I_i(w)$.
For every $i$ with $c_i(w)>0$, define
\[
\beta_i(w)=\min I_i(w)-1.
\]
Arrange the entries $\{c_i(w)>0\}$ in weakly decreasing order to get a partition $\lambda(w)=(\lambda_1\geq \cdots\geq \lambda_k)$, and the values $\{\beta_i(w): c_i(w)>0\}$ in weakly increasing order to get a flag $\phi(w)=(\phi_1\leq \cdots\leq  \phi_k)$.  
  When  $w$ is vexillary, it is well known that  $\mathfrak S_w=s_{\lambda(w)}^{\phi(w)}$,
the flagged Schur polynomial of shape $\lambda(w)$ and flagged by  $\phi(w)$   \cite{Wachs1985}. 

\begin{remark}
A Schubert polynomial $\mathfrak S_w$ is a flagged Schur polynomial if and only if $w$ is vexillary \cite{lascoux1989fonctorialite}. However,   not every flagged Schur polynomial can be realized as a Schubert polynomial.
\end{remark}

For two finite sequences $a=(a_1,\ldots,a_r)$ and
$b=(b_1,\ldots,b_s)$, write $a\preceq b$
if $a$ is a prefix of $b$, that is,   $r\leq s$ and $a_i=b_i$ for
$1\leq i\leq r$.

\begin{conjecture}[vexillary Schubert LPP inequality]\label{con-1jefi}
Let $u,v\in S_n$ be vexillary permutations.  Suppose that their Wachs
flags satisfy $\phi(u)\preceq \phi(v)$. 
Then
\[
\mathfrak S_u(\mathbf x) \mathfrak S_v(\mathbf x) 
\leq_{\mathfrak S}
\mathfrak S_{u\vee v}(\mathbf x) \mathfrak S_{u\wedge v}(\mathbf x).
\]
\end{conjecture}

The above conjecture has been verified  for $n\leq 8$. 

\begin{remark}
Notice that Grassmannian permutations are vexillary permutations. The Wachs flag of a $k$-Grassmannian has all entries equal to $k$. Hence when  both $u$ and $v$ are  $k$-Grassmannians,     Conjecture \ref{con-1jefi} would  specialize to Theorem  \ref{llp-kk}. 
\end{remark}

We give an example to illustrate  Conjecture \ref{con-1jefi}. 
Take two vexillary permutations $u=5712364$ and $ v=3514672$ in $S_7$, which have Lehmer codes
\[
c(u)=(4,5,0,0,0,1,0),\ \ \ \ \ c(v)=(2,3,0,1,1,1,0),
\]
and Wachs flags
\[
\phi(u)=(2,2,6)\,\preceq\,\phi(v)=(2,2,6,6,6).
\]
So we have  
$u\vee v=5713462$ and  $u\wedge v=3512476$.
Note that $u\wedge v$  has the pattern  $2143$, and thus is no longer  vexillary.  By  direct computation,  
\[
\mathfrak S_{u\vee v} \mathfrak S_{u\wedge v}-\mathfrak S_u \mathfrak S_v =2\, \mathfrak S_{(7,8,0,1,1,1)}+\mathfrak S_{(6,9,0,1,1,1)},
\]
where, on  the right-hand side,  we use  the Lehmer codes to indicate  permutations (the former  is a permutation in $S_{10}$, and the latter is in $S_{11}$).

We also formulate an equivariant version of Conjecture \ref{con-1jefi}. The double Schubert polynomials $\mathfrak S_w(\mathbf x; \mathbf y)$ represent the Schubert classes in the torus-equivariant cohomology of the flag variety. In the equivariant case, we set  
\[
\mathfrak{S}_{w_0} (\mathbf x;\mathbf y)=\prod_{i+j\leq n}(x_i-y_j).
\]
If $w\neq w_0$, let  $\mathfrak{S}_{w} (\mathbf x; \mathbf y)=\partial_i \,\mathfrak{S}_{ws_i}(\mathbf x; \mathbf y)$ where $w(i)<w(i+1)$. Here $\partial_i$ acts on the $\mathbf x$ variables. 
It is clear that $\mathfrak{S}_{w} (\mathbf x)=\mathfrak{S}_{w} (\mathbf x; \mathbf y)|_{y_i=0}$.

For $i\geq 1$, write 
$\alpha_i=y_{i+1}-y_i$. By Graham's positivity theorem \cite{Gramhan}, the structure constants $c^w_{u,v}(\mathbf y) $ in the expansion of  $\mathfrak S_u(\mathbf x; \mathbf y)\mathfrak S_v(\mathbf x; \mathbf y)$ satisfies 
\[
\mathfrak S_u(\mathbf x; \mathbf y)\mathfrak S_v(\mathbf x; \mathbf y)=\sum_{w} c^w_{u,v}(\mathbf y) \mathfrak S_w(\mathbf x; \mathbf y),\ \ \text{where $c^w_{u,v}(\mathbf y)\in \mathbb Z_{\geq 0}[\alpha_1,\alpha_2,\ldots]$}.
\]
This motivates  the equivariant Schubert positivity:   write $f\leq_{\mathfrak S}^{\text{equiv}} g$
  if   
\[
g-f\in \sum_{w} \mathbb R_{\geq 0}[\alpha_1,\alpha_2,\ldots] \cdot \mathfrak S_w(\mathbf x; \mathbf y).
\]

\begin{conjecture}[vexillary equivariant Schubert LPP inequality]\label{con-1jefi-equi}
Let $u,v\in S_n$ be vexillary permutations.  Suppose that their Wachs
flags satisfy $\phi(u)\preceq \phi(v)$. 
Then
\[
\mathfrak S_u(\mathbf x; \mathbf y) \mathfrak S_v(\mathbf x; \mathbf y) 
\leq_{\mathfrak S}^{\mathrm{equiv}}
\mathfrak S_{u\vee v}(\mathbf x; \mathbf y) \mathfrak S_{u\wedge v}(\mathbf x; \mathbf y).
\]
This conjecture  will  specialize to Conjecture \ref{con-1jefi}  by taking $\mathbf y=\mathbf 0$.
\end{conjecture}

Because of the computational complexity of double Schubert polynomials, we tested Conjecture \ref{con-1jefi-equi} randomly for vexillary permutations in $S_n$ for $n=9$. 

When  restricting  $w$ to a $k$-Grassmannian, $\mathfrak S_w(\mathbf x; \mathbf y)$ is equal to the double Schur polynomial $s_{\lambda(w)}(\mathbf x_k; \mathbf y)$, where $\mathbf x_k=(x_1,\ldots, x_k)$,    which is also called the factorial Schur polynomial, see for example \cite{KT,MS}. In the Grassmannian case, Conjecture \ref{con-1jefi-equi} leads to an equivariant extension of Theorem  \ref{llp-kk}.

\begin{conjecture}[equivariant  Schur LPP inequality]\label{equi-nlvnsln-12}
For   partitions $\lambda$ and $\mu$, we have 
\[
   s_\lambda(\mathbf x_k; \mathbf y)s_\mu(\mathbf x_k; \mathbf y)
   \leq_s^{\mathrm{{equiv}}}
   s_{\lambda\vee\mu}(\mathbf x_k; \mathbf y)s_{\lambda\wedge\mu}(\mathbf x_k; \mathbf y).
\]
This conjecture will   specialize  to the classical  LPP inequality  by taking $\mathbf y=\mathbf 0$.
\end{conjecture}

Conjecture \ref{equi-nlvnsln-12} has been checked 
for all  Grassmannians in $S_n$ with $n$ up to $7$, and randomly for Grassmannians in $S_n$ with $n=10$. 

We lastly turn to the  Grothendieck setting. As the  K-theory counterpart,     Grothendieck polynomials $\mathfrak{G}_w(\mathbf x)$ are   polynomial representatives of   Schubert classes in the Grothendieck ring of the    flag variety \cite{LS-Gro}.  We shall use  the $\beta$-version $\mathfrak{G}_w^{(\beta)}(\mathbf  x)$ defined by  Fomin and Kirillov   \cite{fomin1994grothendieck}, which specializes to $\mathfrak{S}_w(\mathbf x)$ and $\mathfrak{G}_w(\mathbf x)$ by taking $\beta=0$ and $\beta=-1$, respectively. 

The $\beta$-Grothendieck polynomials  are  defined in the same way as Schubert polynomials, except that we replace $\partial_i$ by the   operator 
$\pi_i^{(\beta)}(f)=
\partial_i((1+\beta x_{i+1})f)$.
Now, for two polynomials $f$ and $g$, write $f\leq_{\mathfrak{G}^{(\beta)}} g$
if
\[
g-f\in \sum_w \mathbb R_{\geq 0}[\beta]\cdot  \mathfrak{G}_w^{(\beta)}.
\]

The following  K-theory analogue  of Conjecture \ref{con-1jefi}  has been checked for $n\leq 7$. 
 
\begin{conjecture}[vexillary Grothendieck LPP inequality]\label{con-1jefi-G}
Let $u,v\in S_n$ be vexillary permutations.  Suppose that their Wachs
flags satisfy $\phi(u)\preceq \phi(v)$. 
Then
\[
\mathfrak{G}_u^{(\beta)}(\mathbf x)\mathfrak{G}_v^{(\beta)}(\mathbf x)
\leq_{\mathfrak{G}^{(\beta)}}
\mathfrak{G}^{(\beta)}_{u\vee v}(\mathbf x)\mathfrak{G}^{(\beta)}_{u\wedge v}(\mathbf x).
\]
This conjecture will specialize to Conjecture \ref{con-1jefi} by taking $\beta=0$. 
\end{conjecture}

Similarly to the Schubert case, when $w$ is a vexillay permutation, $\mathfrak{G}_w^{(\beta)}(\mathbf x)$ can be constructed  by flagged set-valued tableaux, see for example \cite{KMY-1}. 
In particular, for $w$
a $k$-Grassmannian, its Grothendieck polynomial (for $\beta=1$) is equal to    $\widetilde{G}_{\lambda(w)}(\mathbf x_k)$. Hence when restricted to $k$-Grassmannians, Conjecture \ref{con-1jefi-G} reduces to   a statement for stable Grothendieck polynomials, as previously predicted   by Thomas and Yong \cite[Conjecture 9.2]{TY-1} (see also   Mihalcea \cite{Mihalcea}) which remains open. 

\begin{conjecture}[\text{\cite[Conjecture 9.2]{TY-1}}]\label{dsj-0sdj-09} 
For   partitions $\lambda$ and $\mu$, we have 
\[
   \widetilde{G}_\lambda(\mathbf x)\widetilde{G}_\mu(\mathbf x)
\leq_{\widetilde{G}}
   \widetilde{G}_{\lambda\vee\mu}(\mathbf x)\widetilde{G}_{\lambda\wedge\mu}(\mathbf x).
\]
\end{conjecture}

The double Grothendieck polynomials are the K-theory analogue of double Schubert polynomials. The structure constants in this setting are positive in the sense of Anderson, Griffeth and Miller \cite{AGM}. The numerical data indicate that Conjectures \ref{con-1jefi-G} and  \ref{dsj-0sdj-09} might also possess an equivariant Grothendieck LPP positivity. 

The structure of this paper is  as follows.  In Section \ref{sec:preliminaries}, we lay out necessary definitions and notation for this paper. In Section \ref{sec-s33}, we introduce  the notion of oscillating sequences which is the starting point of our proof for Theorem \ref{main-1}. In particular, oscillating sequences form a distributive lattice. 
Sections \ref{sec:lpp} and   \ref{vari-srche}  are devoted to  a multivariate  Reuter--Lov\'asz--Saks type inequality and   the  {Schur orchestra inequality}, respectively.
Collecting  the results in  Sections \ref{sec-s33},    \ref{sec:lpp} and \ref{vari-srche}, we complete the proof of Theorem \ref{main-1} in Section \ref{sec:schur-positive}.

\subsection*{Acknowledgements}
We thank Swee Hong Chan for providing very valuable   comments on a draft  of this paper. 
This work was  supported by the National Natural Science Foundation of China (No. 12371329) and the Fundamental Research Funds for the Central Universities (No. 63263094).

\section{Preliminaries}\label{sec:preliminaries}

In this section, we shall collect some terminology and notation. Some basic  properties that we require  will also be mentioned. 
In particular, we refer to \cite[\S 1.1]{Fulton1997} and \cite[Chapter 7]{Stanley1999} for the theory of symmetric functions.

\subsection{Partition and Young lattice}

A partition is a weakly decreasing sequence $ \lambda=(\lambda_1,\lambda_2,\ldots) $ of nonnegative integers with only finitely many nonzero parts. Denote by   $|\lambda|=\sum_i \lambda_i$   the size of $\lambda$, and  by $\ell(\lambda)$  the number of nonzero parts of $\lambda$.
The Young diagram of   $\lambda$ is a left-justified array of boxes with $\lambda_i$ boxes in row $i$.  Let $(i,j)$   represent  the box in row $i$ and column $j$. Here the row (resp., column) indices are counted from top to bottom (resp., from left to right). Let $\mathcal{P}$ represent the set of all partitions.  We use $\mu \subseteq \lambda$ to mean that the Young diagram of $\mu$ is contained in that  of $\lambda$, namely, $\lambda_i\geq \mu_i$ for all $i\geq 1$.  Under the inclusion $\subseteq$ of partitions, $\mathcal{P}$ forms a distributive poset, called the Young lattice. Specifically, for two partitions $\lambda$ and $\mu$, their \textit{join} $\lambda\vee\mu$ and  \textit{meet} $\lambda\wedge\mu$ are partitions satisfying  
\[
(\lambda\vee\mu)_i=\max\{\lambda_i,\mu_i\},
\qquad
(\lambda\wedge\mu)_i=\min\{\lambda_i,\mu_i\}.
\]

For $\mu \subseteq \lambda$, the skew diagram $\lambda/\mu$ is obtained from the   diagram $\lambda$ by removing the boxes in $\mu$. We say  $\lambda/\mu$ is a \textit{horizontal} (resp., {\it vertical}) {\it strip} if  there are no two boxes lying in the same column (resp., row).
Here we allow a horizontal or vertical strip to be empty (namely, in the case $\lambda=\mu$).

\subsection{ Schur and flagged Schur function}

Let $\lambda/\mu$ be a skew shape.  A \textit{semistandard Young  tableau} of shape $\lambda/\mu$ is a filling 
$T\colon \lambda/\mu\longrightarrow \mathbb Z_{>0}$ of the boxes of $\lambda/\mu$ with positive integers, 
 such that the entries are weakly increasing from left to right along each row and strictly increasing from top to bottom along each column. In other words, whenever the corresponding boxes belong to $\lambda/\mu$, we have \[ T(i,j)\le T(i,j+1), \qquad T(i,j)<T(i+1,j). \]

Denote by $\SSYT(\lambda/\mu)$ the set of all semistandard  tableaux of shape $\lambda/\mu$. For   $T\in \SSYT(\lambda/\mu)$, its weight is the sequence $\wt(T)=(\wt_1(T),\wt_2(T),\ldots),$  where $\wt_i(T)$ is the number of entries of $T$ equal to $i$. Write \[ \mathbf{x}^{\wt(T)} = \prod_{i\ge 1} x_i^{\wt_i(T)}. \] 
The   \textit{Schur function} indexed by skew shape $\lambda/\mu$ is defined by \[ s_{\lambda/\mu}(\mathbf{x}) = \sum_{T\in \SSYT(\lambda/\mu)} \mathbf{x}^{\wt(T)}. \] 
In the straight shape case when $\mu=\emptyset$, we write $s_\lambda(\mathbf{x})$ instead of $s_{\lambda/\emptyset}(\mathbf{x})$. If we use a finite number of variables $\mathbf x_n=(x_1,\ldots, x_n)$, then we are given   a Schur polynomial. 
It is a classical result that the set of  Schur functions forms a linear basis of the ring of symmetric functions.

The flagged Schur polynomials are generalizations of Schur polynomials. A \textit{flag} means 
a weakly increasing sequence  $\phi=(\phi_1,\phi_2,\ldots,\phi_r)$  of positive integers. 
Suppose that $\lambda/\mu$ has at most $r$ rows. A \textit{flagged semistandard tableau }of shape $\lambda/\mu$ and with flag $\phi$ is a semistandard tableau   of shape $\lambda/\mu$ such that every entry in row $i$ cannot exceed  $\phi_i$.
Denote the set of such tableaux by  $\SSYT(\lambda/\mu, {\phi})$. The corresponding \textit{flagged Schur function} is \[ s_{\lambda/\mu}^{\phi}(\mathbf{x}) = \sum_{T\in \SSYT(\lambda/\mu, {\phi})} \mathbf{x}^{\wt(T)}. \]

\subsection{Stable and dual stable Grothendieck polynomial}

Stable Grothendieck polynomials $G_\lambda(\mathbf{x})$  and their duals $g_\lambda(\mathbf{x})$ are nonhomogeneous generalizations of Schur functions. 
We first recall the set-valued tableau construction of $G_\lambda(\mathbf{x})$ pioneered by Buch \cite{Buch2002}. For two nonempty finite sets $A,B\subset \mathbb Z_{>0}$, we write $A\le B$ (resp., $A<B $) to mean that $\max A\le \min B$  (resp., $\max A< \min B$). 
A \textit{set-valued tableau} of shape $\lambda/\mu$ is a filling $T$ of each box of $\lambda/\mu$ with a nonempty finite subset of $\mathbb Z_{>0}$ such that the sets are weakly increasing along each row, and strictly increasing down each column. 
Let $\SVT(\lambda/\mu)$ be the set of all   set-valued tableaux of shape $\lambda/\mu$.

For $T\in \SVT(\lambda/\mu)$, define $|T|=\sum_{(i,j)\in \lambda/\mu} \# T(i,j)$.  Its weight is $\wt(T) = (\wt_1(T),\wt_2(T),\ldots),$   where $\wt_i(T)$ is the total number of occurrences of $i$  appearing in $T$. The \textit{stable Grothendieck polynomial} of  skew shape $\lambda/\mu$ is defined by \[ G_{\lambda/\mu}(\mathbf{x}) = \sum_{T\in \SVT(\lambda/\mu)} (-1)^{|T|-|\lambda/\mu|} \mathbf{x}^{\wt(T)}. \] 
It is easy to see that  the Schur function $s_\lambda$  is equal to the lowest-degree homogeneous component   of $G_{\lambda}(\mathbf{x})$. 

The Hall inner product on symmetric functions is defined by setting $<s_\lambda, s_\mu>=\delta_{\lambda, \mu}$. As the dual basis of  $G_{\lambda}(\mathbf{x})$, 
the   \textit{dual stable Grothendieck polynomials} $g_{\lambda}(\mathbf{x})$  were extensively  studied by  Lam and Pylyavskyy \cite{LamPylyavskyy2007}.  Unlike $G_{\lambda}(\mathbf{x})$, the dual  $g_{\lambda}(\mathbf{x})$ is combinatorially constructed in terms of reverse plane partitions. Recall that a \textit{reverse plane partition} of shape $\lambda/\mu$ is a filling  $T\colon \lambda/\mu\longrightarrow \mathbb Z_{>0}$   of the boxes in $\lambda/\mu$   such that the entries are weakly increasing along each row and each column. Denote  by  $\RPP(\lambda/\mu)$ the set of reverse plane partitions of shape $\lambda/\mu$.  

For $T\in \RPP(\lambda/\mu)$, the {irredundant content} vector $\ircont(T)=(r_1,r_2,\ldots)$ is defined by letting  $r_i$ be the number of columns  contains at least one $i$. Define  \[ g_{\lambda/\mu}(\mathbf{x}) = \sum_{T\in \RPP(\lambda/\mu)} \mathbf{x}^{\ircont(T)}. \]
It follows from the definition that the highest-degree homogeneous component   of $g_{\lambda}(\mathbf{x})$ is the Schur function $s_\lambda$.

 Lam and Pylyavskyy \cite{LamPylyavskyy2007} showed that $<G_\lambda, g_\mu>=\delta_{\lambda, \mu}$.  Importantly, this relation yields an alternative way to compute the structure constants in the K-theory of Grassmannians, see also Li,  Morse and   Shields \cite{LMS}. 

 The expansions of $G_\lambda$ and $g_\lambda$ in  the basis of Schur functions have been given in \cite{Buch2002} and \cite{LamPylyavskyy2007},  respectively,  see also Bandlow and Morse \cite{Bandlow}. The coefficients appearing in the expansion have explicit combinatorial formulas in terms of  certain kinds of tableaux. Such  formulas are crucial  in the proofs of  Theorems \ref{thm-ek} and \ref{thm-ek-d}, as given by 
Chan, Chen, Pak and Soskin \cite{CCPS}.

\subsection{Hybrid Grothendieck polynomial}

The hybrid Grothendieck polynomials $ H_{\lambda}(\mathbf{x};\mathbf{t},\mathbf{w})$, which are defined based on set-valued reverse plane  partitions,  provide   a unified platform to investigate stable and dual stable Grothendieck polynomials. 
A  \textit{set-valued reverse plane partition} of shape $\lambda/\mu$ is a filling of every box of $\lambda/\mu$ with  a nonempty finite set of positive integers,  such that the sets are weakly increasing along each row and each column. See Figure \ref{fig-agk-90}  for an illustration. The set of such fillings is denoted  by $\SVRPP(\lambda/\mu)$.
\begin{figure}[h t]
\begin{center}
\begin{tabular}{c c c c c c c}
\begin{ytableau}
\none&\none&\none&2\\
\none&1 & 12 &23\\
12  & 23 & 3 &34\\
23 & 345 & 5\\
35
\end{ytableau}
\end{tabular}
\end{center}
\vspace{-5pt}
\caption{ A set-valued  reverse plane  
partition. Here we have omitted  the braces $\{\ \}$ for sets as well as  the commas between numbers.}
\label{fig-agk-90}
\end{figure}

For $T\in\SVRPP(\lambda/\mu)$ and a box $(i,j)\in \lambda/\mu$, let $T(i,j)$ be the set in $(i,j)$. To define $ H_{\lambda}(\mathbf{x};\mathbf{t},\mathbf{w})$, we need  the following   statistics:
\begin{itemize}
\item the \emph{irredundant content vector} $\ircont(T)=(r_1,r_2,\ldots)$, where $r_i$ is the number of columns containing a set which contains $i$;

\item the \emph{column equality vector} $\ceq(T)=(c_1,c_2,\ldots)$, where $c_i$ is the number of boxes $(i,j)$ in row $i$ such that $(i+1,j)\in\lambda/\mu$ and $\max T(i,j)=\min T(i+1,j)$;

\item the \emph{excess vector} $\ex(T)=(e_1,e_2,\ldots)$, where
\[
   e_i=
   \sum_{(i,j)\in\lambda/\mu}\bigl(\# T(i,j)-1\bigr).
\]
\end{itemize}
For the set-valued reverse plane partition in  Figure \ref{fig-agk-90}, we have 
\begin{align*}
\mathrm{ircont}(T)=(3,4,4,2,3), \ \ \  \mathrm{ceq} (T)=(1,1,2,1),   \ \ \  \mathrm{ex} (T)=(0,2,3,3,1).
\end{align*}

The \textit{hybrid Grothendieck polynomial} of shape $\lambda/\mu$ is
\begin{equation*}\label{eq:Hp-definition}
  H_{\lambda/\mu}(\mathbf{x};\mathbf{t},\mathbf{w})
  =
  \sum_{T\in\SVRPP(\lambda/\mu)}
  \mathbf{x}^{\ircont(T)}\mathbf{t}^{\ceq(T)}\mathbf{w}^{\ex(T)}.
\end{equation*}
We emphasize that  $\mathbf{x}=(x_1,x_2,\ldots)$, $\mathbf{t}=(t_1,t_2,\ldots)$  and $\mathbf{w}=(w_1,w_2,\ldots)$ are countable sequences of variables, and $\mathbf y^\alpha=y_1^{\alpha_1}y_2^{\alpha_2}\cdots$ for a weak composition $\alpha$.

As aforementioned in  Remark \ref{rmk-agh}, hybrid Grothendieck polynomials unify (refined) stable and dual stable  Grothendieck polynomials.  If setting all  $t_i=0$, then     ${H}_{\lambda}(\mathbf{x};\mathbf{0},\mathbf{w})=G_\lambda(\mathbf x; \mathbf w)$ is   the \textit{refined stable Grothendieck polynomial}  due to Chan  and Pflueger \cite{ChanPflueger2021}. If  further setting  all  $w_i=-1$, then ${H}_{\lambda}(\mathbf{x};\mathbf{0},\mathbf{-1})$ is    $G_\lambda(\mathbf{x})$. We also have  ${H}_{\lambda}(\mathbf{x};\mathbf{0},\mathbf{1})=\widetilde{G}_\lambda(\mathbf x)$.
On the other hand, if setting all $w_i=0$, then      ${H}_{\lambda}(\mathbf{x};\mathbf{t},\mathbf{0})=g_\lambda(\mathbf x;\mathbf t)$ is  the \textit{refined dual stable Grothendieck polynomial}  introduced   by Galashin, Grinberg and Liu \cite{GalashinGrinbergLiu2016}. If further setting all   $t_i=1$, then ${H}_{\lambda}(\mathbf{x};\mathbf{1},\mathbf{0})$ becomes  $g_\lambda(\mathbf{x})$.

It was shown in  \cite[Theorem 1.2]{GuoKangLiu2025}
that $H_{\lambda/\mu}(\mathbf{x};\mathbf{t},\mathbf{w})$ is indeed symmetric in $\mathbf x$. Moreover,  $H_{\lambda/\mu}(\mathbf{x};\mathbf{t},\mathbf{w})$ is Schur positive. Using the crystal basis method, the coefficients  $K_{\lambda,\mu}(\mathbf{t},\mathbf{w})$  in the Schur expansion 
\begin{equation} \label{ck-dk}
H_{\lambda}(\mathbf{x};\mathbf{t},\mathbf{w})
   =
   \sum_\mu K_{\lambda,\mu}(\mathbf{t},\mathbf{w})s_\mu(\mathbf{x})
\end{equation}
can be formulated in terms of set-valued reverse  plane partitions satisfying the lattice word condition \cite[Theorem 1.3]{GuoKangLiu2025}. Note that \cite[Theorem 1.3]{GuoKangLiu2025} is valid for general  skew shapes. This paper  only concerns   the straight shape situation. 

For our purpose to prove  Theorem \ref{main-1}, the formula of 
$K_{\lambda,\mu}(\mathbf{t},\mathbf{w})$ given in \cite[Theorem 1.3]{GuoKangLiu2025} is not available, because of the absence of a distributive lattice structure.  To address this problem, inspired by the work  of Bandlow and Morse  \cite{Bandlow}, we design a  dilation--contraction algorithm on set-valued reverse plane partitions to give a new formula for $K_{\lambda,\mu}(\mathbf{t},\mathbf{w})$, see  Section \ref{sec-s33}.

\subsection{RSK algorithm}\label{subsec:rsk-row-insertion}

The \textit{RSK algorithm} inserts a positive integer $x$ to a semistandard tableau  $T$ of straight shape, yielding a new semistandard tableau. 
The algorithm 
is described  in a recursive procedure. 
First, insert $x$ into the first row of $T$. If all entries in the first row are
less than or equal to $x$, then place $x$ in a new box at the end of first  row
and stop. Otherwise, let $y$ be the smallest  entry in the first row satisfying
$y>x$. Replace  $y$ by $x$ and insert the bumped entry $y$ into the
second row. Repeating the same procedure,  the row insertion algorithm eventually terminates, giving rise to a new semistandard tableau, denoted 
$T\leftarrow x$. Note that the shape of $T\leftarrow x$ has one more extra corner, denoted $B$,  than the shape of $T$. 
The  RSK algorithm is reversible in the sense that one  can uniquely recover $T$ and $x$ from $T\leftarrow x$ and the position of the  extra corner box $B$. 
The collection of boxes where an entry is bumped, together with the corner  box where the last bumped entry lands, is called the bumping route. 
Figure     \ref{fig-jpg} gives an illustration of the RSK insertion for $x=3$, where the boxes on the bumping route are  shaded. 
\begin{figure}[h t]
    \centering    
\[ \begin{aligned}
\ytableausetup{boxsize=1.3em}
\begin{ytableau}
1 & 3 & *(routegray)4 & 6 \\
4 & 4 & 5 \\
5 
\end{ytableau}
\quad
\xrightarrow{}
\quad
\begin{ytableau}
1 & 3 &*(routegray)3 & 6 \\
4 & 4 & *(routegray)5 \\
5
\end{ytableau}
\quad
\xrightarrow{}
\quad
\begin{ytableau}
1 & 3 & *(routegray)3 & 6 \\
4 & 4 &*(routegray)4 \\
5 & *(routegray)5
\end{ytableau}
\end{aligned}\]
\caption{An illustration of the RSK algorithm.}
    \label{fig-jpg}
\end{figure}

The following property   is not hard to justify,  see   \cite[Chapter 1]{Fulton1997}.

\begin{proposition}\label{rsk}
Let $T$ be a semistandard tableau of shape $\lambda$, and let
\[
U=(((T\leftarrow x_1)\leftarrow x_2)\cdots )\leftarrow x_p
\]
be obtained    by successively  inserting
$x_1,\ldots,x_p$. Let $\mu$ be the shape of $U$.
If $x_1>x_2>\cdots >x_p$, 
then $\mu/\lambda$ forms a vertical strip.
Conversely, suppose that $U$ is a semistandard tableau  of shape $\mu$, and that  $\lambda$ is a partition in $\mu$ such that $\mu/\lambda$ has $p$ boxes. If $\mu/\lambda$ is a vertical strip, then there exist a unique semistandard tableau  $T$ of shape $\lambda$ and a unique sequence $x_1>x_2>\cdots >x_p$
such that
\[
U=(((T\leftarrow x_1)\leftarrow x_2)\cdots )\leftarrow x_p.
\]
\end{proposition}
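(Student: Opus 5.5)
The statement to prove is Proposition \ref{rsk}. I will rely on two standard features of row insertion: the row bumping lemma, and the reversibility of a single insertion once the new corner box is known.

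\emph{Forward direction.} I will use the row bumping lemma. Suppose $x>x'$, and insert $x$ and then $x'$ into a semistandard tableau. Let $R$ and $R'$ be the two bumping routes, with new boxes $B$ and $B'$. The claim is that $R'$ lies weakly to the left of $R$ and extends at least as far down, so $B'$ sits strictly below $B$ and weakly to its left.

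The proof goes row by row, by induction on the row index. In row $1$, the element bumped by $x'$ is the leftmost entry $>x'$. Since $x'<x$, this entry lies weakly left of the position where $x$ was placed; the updated row contains $x$ there and $x>x'$, so the comparison still applies. The bumped value $y'$ satisfies $y'\le y$, where $y$ was bumped by $x$. If $x$ was appended at the end of row $1$ (no $y$), then $x'$ bumps something weakly left, since $x$ itself is $>x'$, so the route of $x'$ continues to the next row. Thus the inequality $y'<y$ (or at least $y'\le$ the value inserted into the next row), together with the weakly-left position, propagates down the rows.

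Strictness needs care. The value $y'$ is bumped from a position weakly left of the position of $y$. If the positions coincide, the entry there was replaced by $x$, so $y'=x>x'$; if they differ, $y'\le x$ by row weak increase. In either case $y'\le$ the entry inserted into the next row by the first route, and in fact $y'<y$ or equal only in the permitted degenerate cases. Since this is standard (see \cite[Chapter 1]{Fulton1997}), I would cite the row bumping lemma: for $x>x'$, $B'$ is strictly below and weakly left of $B$.

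Applying this to consecutive pairs $x_k>x_{k+1}$, the new boxes $B_1,\dots,B_p$ go strictly down from one to the next. Hence no two of them lie in the same row, and $\mu/\lambda$ is a vertical strip.

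\emph{Converse.} Suppose $\mu/\lambda$ is a vertical strip with $p$ boxes. Order its boxes from bottom to top, $B_p$ lowest up to $B_1$ highest, so that $B_p$ is the corner of $\mu$ in the lowest row of the strip.

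I reverse-bump $U$ successively at $B_p,B_{p-1},\dots,B_1$. This yields values $x_p,x_{p-1},\dots,x_1$ and a tableau $T$ of shape $\lambda$. The intermediate shapes stay partitions because each box removed is a corner of the current shape: the strip has at most one box per row, and the box above a removed box is either outside the strip or removed later. Reverse insertion is well defined at any corner and inverts insertion, so $U=(\cdots(T\leftarrow x_1)\cdots)\leftarrow x_p$.

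It remains to show $x_1>\cdots>x_p$. Here I would use the converse part of the row bumping lemma, which states: if $x'\le x$, then $B'$ is strictly right of and weakly above $B$. So if some $x_k\le x_{k+1}$, then $B_{k+1}$ would lie weakly above $B_k$, contradicting the bottom-to-top ordering, since $B_{k+1}$ is strictly below $B_k$. Hence the sequence is strictly decreasing.

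For uniqueness, suppose we have any decomposition with $x_1>\cdots>x_p$. By the forward direction its new boxes occur in bottom-to-top order, so they must be exactly $B_1,\dots,B_p$ as labelled above. Reverse bumping then determines $T$ and the $x_i$ uniquely.

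The main obstacle is the careful proof of the bumping lemma, in particular the strict-below versus weakly-above dichotomy. I will cite it from \cite{Fulton1997} rather than reprove it.
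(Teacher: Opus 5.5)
Your argument is correct and matches the one the paper relies on (the paper gives no proof of its own and cites Fulton, Chapter 1). As there, the forward direction comes from the row bumping lemma, and the converse comes from reverse bumping the vertical strip bottom to top and then using the other case of the lemma to force $x_k>x_{k+1}$. Two small slips. First, with $x$ inserted before $x'$, that other case has hypothesis $x\le x'$, not $x'\le x$ as you wrote; your application to $x_k\le x_{k+1}$ does use the correct version. Second, in your row-$1$ sketch you always get $y'\le x<y$, so the strict inequality $y'<y$ holds with no degenerate equality case.
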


\subsection{Jeu de taquin algorithm}\label{subsec:jdt}

Let $\lambda/\mu$ be a skew diagram. An \emph{inside corner} of $\lambda/\mu$
is a box which may be removed from $\mu$ while still leaving a valid Young diagram, while an  \emph{outside corner} of $\lambda/\mu$ is a box which
may be removed from $\lambda$ while still leaving a Young diagram.  In Figure    \ref{fig-jpg-er}, the inside and outside corners are marked with $\circ$  and $\times$, respectively. 
\begin{figure}[h t]
    \centering   
    \vspace{-10pt}
    \[
\ytableausetup{boxsize=1.1em}
\begin{ytableau}
*(gray!55) & *(gray!55) & *(gray!55) & *(gray!55){\circ} & {\times} \\
*(gray!55) &*(gray!55) {\circ} &  & {\times} \\
 &   \\
 & {\times}
\end{ytableau}
\]
    \vspace{-10pt}
\caption{Inside and outside corners.}
    \label{fig-jpg-er}
\end{figure}

Let $S$ be a  semistandard tableau  of shape $\lambda/\mu$. Suppose that  $C$ is an inside
corner of $\lambda/\mu$. Regard  $C$ as an empty box. The
\textit{jeu de taquin (jdt) slide} starting from $C$ is defined as follows. 
At each step, compare the entries in the two boxes immediately to the right
and immediately below the empty box.
Move the smaller of these two entries into the empty cell. If only one of the two
neighbors exists, move that entry into the empty cell. If both neighbors exist and
have equal entries, move the lower entry into the empty cell. The empty box has now moved
to the position of the entry just moved.
This process is repeated until the empty box reaches an outside corner, yielding a new   semistandard tableau.
The jdt slide is also reversible. Suppose that  $S'$ is the  semistandard tableau  obtained after a jdt slide, and 
that the outside corner $B$ removed at the end of the slide is specified. Replace $B$ by
an empty box. One can reverse the slide procedure  to recover the original tableau $S$. 
For example, in Figure \ref{fig-as-cf}, we apply the jdt algorithm to the inner box $(2,2)$ where the black boxes constitute the slide path.  

\begin{figure}[h t]
    \centering   
\[
\ytableausetup{boxsize=1.3em}
\setlength{\arraycolsep}{1.2em}
\begin{array}{ccccc}
\begin{ytableau}

\deletedbox & \deletedbox & \deletedbox & \deletedbox & 3 \\
\deletedbox & \jdtbox & 4 & 4 \\
2 & 4 & 5 \\
5 & 6
\end{ytableau}
\quad
\xrightarrow{}
\quad
\begin{ytableau}

\deletedbox & \deletedbox & \deletedbox & \deletedbox & 3 \\
\deletedbox & 4 & 4 & 4 \\
2 & \jdtbox & 5 \\
5 & 6
\end{ytableau}
\quad
\xrightarrow{}
\quad
\begin{ytableau}

\deletedbox & \deletedbox & \deletedbox & \deletedbox & 3 \\
\deletedbox & 4 & 4 & 4 \\
2 & 5 & \jdtbox \\
5 & 6
\end{ytableau}
\end{array}
\]
\caption{An illustration of the jdt slide.}
    \label{fig-as-cf}
\end{figure}


\section{Oscillating sequences}\label{sec-s33}

In this section, we introduce oscillating sequences to  give a new combinatorial formula for the coefficients $K_{\lambda,\mu}(\mathbf{t},\mathbf{w})$ appearing in the expansion in   \eqref{ck-dk}. The construction  is based on the RSK and jeu de taquin algorithms.  As a key property that we need,  
 oscillating sequences admit a distributive lattice structure.

\subsection{Oscillating sequences}

 Recall that a skew shape $\lambda/\mu$  is a horizontal (resp., vertical) strip if there are no two boxes lying  in the same column (resp., row). Here we allow a horizontal or vertical strip to be empty. 
A pair of partitions $(\lambda, \mu)$ is called a  {\textit{$k$-falling pair}} if 
\begin{enumerate}
    \item[(1)] $\lambda \supseteq \mu$ and $\lambda_i = \mu_i$ for all $ i \le k$,
    
    \item[(2)] $\lambda/\mu$ is a horizontal strip.
\end{enumerate}
Dually, we call a pair of partitions $(\lambda, \mu)$ a  {\textit{$k$-rising pair}} if
\begin{enumerate}
    \item[(1')] $\lambda \subseteq \mu$ and $\lambda_i = \mu_i$ for all $ i \le k$,
    
    \item[(2')] $\mu/\lambda$ is a vertical strip.
\end{enumerate}

\begin{definition}\label{def:oscillating-sequence}
Let $\lambda$ and $\mu$ be partitions. For $l\geq 1$, an  {$l$-oscillating sequence} from $\lambda$ to $\mu$ is a sequence $S=(\lambda^0,\lambda^1,\ldots,\lambda^{2l-1})$ of partitions
such that
\begin{enumerate}
    \item[(1)] $\lambda^0=\lambda$ and $\lambda^{2l-1}=\mu$;
    
    \item[(2)] For each $1\le i\le l$, the pair $(\lambda^{2i-2},\lambda^{2i-1})$ is an $(l-i+1)$-rising pair, and for each $1\le j\le l-1$, the pair $(\lambda^{2j-1},\lambda^{2j})$ is an $(l-j)$-falling pair.
\end{enumerate}
\end{definition}

\begin{remark}
In the remainder of this paper, we shall only be concerned with {$l$-oscillating sequences} from $\lambda$ to $\mu$ where  $l\ge \ell(\lambda)$.  
\end{remark}

\ytableausetup{boxsize=1.2em}
\newcommand{\ytb}{\phantom{\scriptstyle 1}}

For example, we take $\lambda=(4,3,2)$, $\mu=(4,3,3,1)$, and $l=3$.   Figure   \ref{fig-abfcf}
 gives an $3$-oscillating sequence from $\lambda$ to $\mu$.
\begin{figure}[h t]
    \centering  
\[
\begin{array}{c@{\quad}c@{\quad}c@{\quad}c@{\quad}c@{\quad}c}
\lambda^0=(4,3,2)
&&
\lambda^1=(4,3,2,1)
&&
\lambda^2=(4,3,1)
&
\\[2mm]
\begin{ytableau}
\ytb & \ytb & \ytb & \ytb \\
\ytb & \ytb & \ytb \\
\ytb & \ytb
\end{ytableau}
&
\xrightarrow{\;3\text{-rising}\;}
&
\begin{ytableau}
\ytb & \ytb & \ytb & \ytb \\
\ytb & \ytb & \ytb \\
\ytb & \ytb \\
\ytb
\end{ytableau}
&
\xrightarrow{\;2\text{-falling}\;}
&
\begin{ytableau}
\ytb & \ytb & \ytb & \ytb \\
\ytb & \ytb & \ytb \\
\ytb
\end{ytableau}
&
\xrightarrow{\;2\text{-rising}\;}
\\[12mm]
\lambda^3=(4,3,2,1)
&&
\lambda^4=(4,2,2)
&&
\lambda^5=(4,3,3,1)
&
\\[2mm]
\begin{ytableau}
\ytb & \ytb & \ytb & \ytb \\
\ytb & \ytb & \ytb \\
\ytb & \ytb \\
\ytb
\end{ytableau}
&
\xrightarrow{\;1\text{-falling}\;}
&
\begin{ytableau}
\ytb & \ytb & \ytb & \ytb \\
\ytb & \ytb \\
\ytb & \ytb
\end{ytableau}
&
\xrightarrow{\;1\text{-rising}\;}
&
\begin{ytableau}
\ytb & \ytb & \ytb & \ytb \\
\ytb & \ytb & \ytb \\
\ytb & \ytb & \ytb\\
\ytb
\end{ytableau}
&
\end{array}
\]
\caption{An oscillating sequence from $(4,3,2)$ to $(4,3,3,1)$ with $l=3$.}
    \label{fig-abfcf}
\end{figure}

\begin{remark}
By definition, it is easy to see that for  an oscillating sequence   $S=(\lambda^0,\lambda^1,\ldots,\lambda^{2l-1})$, the first parts of $\lambda^i$ $(0\leq i\leq 2l-1)$ are of the same size.   
\end{remark}

Let $\mathrm{OS}^l(\lambda\rightarrow\mu)$ denote  the set of all  $l$-oscillating  sequences from $\lambda$ to $\mu$. Note that for fixed $\lambda$ and $\mu$, the set  $\mathrm{OS}^l(\lambda\rightarrow\mu)$ is finite  since there are finitely many choices for the added vertical strips or the deleted horizontal strips in the construction process.  
For an oscillating sequence  $S=(\lambda^0,\ldots,\lambda^{2l-1})$, define
\[
   \wt(S)=\prod_{1\le i\le l}w_{l-i+1}^{a_i}\prod _{1\leq j\leq l-1}t_{l-j}^{b_j},
   \qquad
   a_i=|\lambda^{2i-1}/\lambda^{2i-2}|,
   \quad
   b_j=|\lambda^{2j-1}/\lambda^{2j}|.
\]

We relate set-valued reverse plane partitions to oscillating sequences in the following theorem. 

\begin{theorem}\label{thm:schur-expansion}
For any fixed partition $\l$, there exists a bijection
\[\Phi:\SVRPP(\l)\longrightarrow\bigsqcup_\m \SSYT(\m)\times \OS^{\ell(\lambda)}(\l\rightarrow \m).\] 
Moreover, for $T\in \SVRPP(\l)$, 
if $\Phi(T)=(T', S)$, then \[\mathbf{x}^{\ircont(T)}\mathbf{w}^{\ex(T)}\mathbf{t}^{\ceq(T)}=\mathbf{x}^{\wt(T')} {\wt(S)}.\]
\end{theorem}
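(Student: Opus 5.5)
Let $l=\ell(\lambda)$. I would build $\Phi$ row by row, from the bottom row of $\lambda$ up, in the spirit of Bandlow--Morse: each row of $T$ contributes one rising step (RSK insertion of the extra set entries) and, between consecutive rows, one falling step (a jeu de taquin contraction removing redundant column equalities). The oscillating sequence records the shapes after each step, and $T'$ is the final tableau.

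\emph{Step 1 (processing a row).} Write row $r$ of $T$ as sets $A_1\le A_2\le\cdots\le A_{\lambda_r}$. Keep $\min A_j$ as the ``main'' entry of box $(r,j)$. Read the remaining elements $A_j\setminus\{\min A_j\}$ in the order that makes them strictly decreasing (right to left, and decreasing within each box), and RSK-insert them into the tableau built so far. By Proposition \ref{rsk}, the added cells form a vertical strip. The strip has $\sum_j(\#A_j-1)=e_r$ cells, which produces the factor $w_r^{a}$.

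\emph{Step 2 (contraction between rows).} Between rows $r$ and $r+1$, some boxes in row $r$ satisfy $\max T(r,j)=\min T(r+1,j)$. These create a repeated column value that ircont counts only once. I would delete the corresponding entries and fill the resulting holes by jdt slides. Using column-strictness and the ordering of the removed positions, I would show that the vacated outer corners form a horizontal strip lying strictly below the rows that are already fixed. The number of removed cells is $c_r$, which produces the factor $t_r$. The index bookkeeping must match Definition \ref{def:oscillating-sequence}: row $l-i+1$ corresponds to the $i$-th rising step, and the fixed top rows correspond to the $k$ in a $k$-falling or $k$-rising pair. With this matching, the $x$-weight of the final semistandard tableau $T'$ equals $\ircont(T)$. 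The reason is that every element of every set is either inserted or kept as a main entry, except for the column-repeated ones, which are removed exactly once each.

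\emph{Step 3 (the main obstacle: invertibility).} Injectivity and surjectivity need each step to be reversible using only the shape data in $S$ and the current tableau. Rising steps reverse by Proposition \ref{rsk}: the vertical strip determines a unique decreasing sequence. That sequence must then be redistributed into the boxes of a row so that it gives valid sets with $\min$ constraints. Falling steps reverse by reverse jdt from the horizontal-strip corners. The recovered entries must land exactly on positions forming equalities $\max=\min$ with the row below.

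The hardest part will be proving that the inverse always outputs a legitimate set-valued reverse plane partition. This requires showing that the recovered sets are weakly increasing along rows and columns, and that the recovered equalities occur precisely where ceq counts them. I expect this to need a bumping-route comparison lemma: successive insertions of a decreasing sequence have routes moving weakly left, and successive slides from left to right have paths moving weakly right. Together with an induction on rows, that lemma should establish well-definedness in both directions.
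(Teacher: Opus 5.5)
Your proposal is correct and is essentially the paper's own dilation--contraction argument. For $k=l,l-1,\ldots$, you alternately apply $\mathrm{di}_k$ (RSK-insert the non-minimal entries of row $k$, in decreasing order, into $T_{\ge k+1}$) and $\mathrm{co}_k$ (empty the boxes of the now single-valued row $k$ that equal the maximum of the set above, then jdt-slide the holes out from right to left), recording the shapes as the oscillating sequence; the paper likewise treats the well-definedness and invertibility checks you flag in Step 3 as routine (Lemmas \ref{di} and \ref{co}).
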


As a direct consequence of Theorem \ref{thm:schur-expansion}, we obtain a formula  for the coefficients $K_{\l,\m}(\mathbf{t},\mathbf{w})$ in terms of oscillating sequences.

\begin{corollary}\label{ciri-j0}
For partitions $\lambda$ and $\mu$, we have   
$$ 
K_{\l,\m}(\mathbf{t},\mathbf{w}) =  \sum_\alpha\sum_\beta f_{\l,\m}^{(\alpha,\beta)}\mathbf{w}^\alpha\mathbf{t}^\beta,$$
where \[f_{\l,\m}^{(\alpha,\beta)}=\#\{S\in \OS^{\ell(\l)}(\l\rightarrow \m):\ \wt(S)=\mathbf w^{\alpha} \mathbf t^{\beta}\}.\]

\end{corollary}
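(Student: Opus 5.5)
The plan is to read Corollary \ref{ciri-j0} off Theorem \ref{thm:schur-expansion} by sorting the right-hand side of the bijection $\Phi$ by shape and weight. Since $\Phi$ is a weight-preserving bijection, summing $\mathbf{x}^{\ircont(T)}\mathbf{t}^{\ceq(T)}\mathbf{w}^{\ex(T)}$ over $T\in\SVRPP(\lambda)$ gives
\[
H_\lambda(\mathbf{x};\mathbf{t},\mathbf{w})=\sum_{\mu}\sum_{T'\in\SSYT(\mu)}\sum_{S\in\OS^{\ell(\lambda)}(\lambda\rightarrow\mu)}\mathbf{x}^{\wt(T')}\wt(S).
\]
For each fixed $\mu$ the double sum factors as a product, because the pairs $(T',S)$ range over a Cartesian product. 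The result is
\[
H_\lambda(\mathbf{x};\mathbf{t},\mathbf{w})=\sum_\mu\Bigl(\sum_{S\in\OS^{\ell(\lambda)}(\lambda\rightarrow\mu)}\wt(S)\Bigr)s_\mu(\mathbf{x}),
\]
using the tableau definition of $s_\mu$.

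Next I will group the inner sum by monomials. Each $\wt(S)$ is a monomial $\mathbf{w}^{\alpha}\mathbf{t}^{\beta}$, so the inner sum equals $\sum_{\alpha,\beta}f^{(\alpha,\beta)}_{\lambda,\mu}\mathbf{w}^\alpha\mathbf{t}^\beta$. This is a polynomial, since $\OS^{\ell(\lambda)}(\lambda\rightarrow\mu)$ is finite, as noted after Definition \ref{def:oscillating-sequence}.

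Finally I will compare with \eqref{ck-dk}. The Schur functions are linearly independent over the coefficient ring $\mathbb{R}[\mathbf{t},\mathbf{w}]$, so comparing coefficients of $s_\mu$ gives the claimed formula for $K_{\lambda,\mu}(\mathbf{t},\mathbf{w})$.

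Two points need care, though neither is a real obstacle. First, $H_\lambda$ is an infinite sum, so the rearrangements must be legitimate. For a fixed monomial $\mathbf{x}^\gamma$ the sums are finite, because $|\mu|$ is bounded by $|\gamma|$ via $|\wt(T')|$. Second, linear independence of the $s_\mu$ over the polynomial ring follows degree by degree in $\mathbf{x}$, since $\mathbf{t}$ and $\mathbf{w}$ are separate indeterminates.
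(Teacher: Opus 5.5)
Your proposal is correct and is the argument the paper intends when it calls Corollary \ref{ciri-j0} a direct consequence of Theorem \ref{thm:schur-expansion}. You sum the weight-preserving bijection $\Phi$ over $\SVRPP(\lambda)$, factor each fiber over $\mu$ as $s_\mu(\mathbf{x})$ times the generating polynomial of $\OS^{\ell(\lambda)}(\lambda\rightarrow\mu)$, and compare Schur coefficients with \eqref{ck-dk}; your remarks on finiteness for each $\mathbf{x}$-monomial and on linear independence of the $s_\mu$ over $\mathbb{R}[\mathbf{t},\mathbf{w}]$ just make explicit what the paper leaves implicit.
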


To prove  Theorem \ref{thm:schur-expansion}, we introduce an algorithm, called the \textit{dilation--contraction algorithm}, performing on   set-valued reverse plane partitions. The idea behind this  is mainly inspired by the operations separately on set-valued tableaux and reverse plane partitions   investigated  by 
 Bandlow and Morse \cite{Bandlow}.

We first  define the dilation operation. 
For a vector $v=(v_1,v_2,\ldots)$ and $r\geq 1$, we use $v_{\geq r}$ to denote the truncation $v_{\geq r}=(v_{r}, v_{r+1},\ldots)$ starting  from position $r$. 
For $r, s\geq 1$, let $\SVRPP(\lambda, r, s)$   denote the subset of $\SVRPP(\lambda)$ including  set-valued reverse plane partitions with $\ceq_{\ge r}(T)=\mathbf 0$ and $\ex_{\ge s}(T)=\mathbf 0$.
In other words, every box of $T$ in or below row $r$ has no redundant entry, and   every box of $T$ in or below row $s$ is filled with a single number.

For $k\geq 1$,  the \textit{{$k$-th dilation}} operation, denoted  $\mathrm{di}_k$, is performed on the set  $\SVRPP(\lambda, k, k+1)$. Let  $T\in\SVRPP(\lambda, k, k+1)$.  We use  $T_{\ge k+1}$ to denote the subtableau of $T$ occupying the boxes in and below  row $k+1$. 
Keep in mind that  $T_{\ge k+1}$ itself  is an ordinary semistandard tableau.  The image $\mathrm{di}_k(T)$ of  {$k$-th dilation}  acting on $T$ is defined as follows.
\begin{itemize}
    \item 
Consider the entries in the $k$-th  row   of $T$ by ignoring the smallest entry in each box. Suppose that such entries are $x_1>\cdots>x_m$. 
Remove    the entries $x_1,\ldots, x_m$    from largest to smallest, and meanwhile  insert these entries   successively  into $T_{\ge k+1}$ by applying the RSK algorithm. 
\end{itemize}
For example, in Figure \ref{fig-aghrn-2}, we apply the dilation algorithm  to a set-valued reverse partition of shape $\lambda=(2,2,1,1)$ and with $k=2$. In the process, we insert $(x_1,x_2,x_3)=(4,3,2)$  successively into $T_{\geq 3}$.
 
\begin{figure}[h t]
    \centering   
\ytableausetup{boxsize=1.7em}

\[
\begin{aligned}
T
&=
\ctab{
\begin{ytableau}
1&12\\
1\textcolor{red}{2} & 2\textcolor{red}{3}\textcolor{red}{4}  \\
3   \\
4
\end{ytableau}}
\;\xrightarrow{\text{insert $4$}}\;
\ctab{
\begin{ytableau}
1&12\\
1\textcolor{red}{2} & 2\textcolor{red}{3}  \\
3 & 4  \\
4
\end{ytableau}}
\;\xrightarrow{\text{insert $3$}}\;
\ctab{
\begin{ytableau}
1&12\\
1\textcolor{red}{2} & 2  \\
3 & 3 \\
4 & 4
\end{ytableau}}
\;\xrightarrow{\text{insert $2$}}\;
\ctab{
\begin{ytableau}
1&12\\
1 & 2 \\
2 & *(routegray)3 \\
3 & *(routegray)4 \\
*(routegray)4
\end{ytableau}}
=
\mathrm{di}_2(T).
\end{aligned}
\]
\caption{An illustration of the dilation algorithm.}
    \label{fig-aghrn-2}
\end{figure}

\begin{lemma}\label{di}
Fix a shape $\lambda$.  The map $\mathrm{di}_k$ is a bijection from $\SVRPP(\lambda, k, k+1)$ to the union of $\SVRPP(\mu, k, k)$, where $(\lambda, \mu)$ is a $k$-rising pair.

\end{lemma}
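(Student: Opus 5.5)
The plan is to verify three things in turn: that $\mathrm{di}_k$ produces a shape $\mu$ with $(\lambda,\mu)$ a $k$-rising pair, that the output lies in $\SVRPP(\mu,k,k)$, and that there is an explicit inverse built from the reverse RSK of Proposition \ref{rsk}.

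\textbf{Step 1: distinct entries and the shape.} Let $T\in\SVRPP(\lambda,k,k+1)$ and let $a_1\le a_2\le\cdots\le a_{\lambda_k}$ be the minima of the boxes in row $k$. The non-minimal entries of $T(k,j)$ are strictly larger than $a_j$ and at most $\max T(k,j)\le a_{j+1}$. So they lie in the half-open interval $(a_j,a_{j+1}]$, where $a_{\lambda_k+1}=\infty$. These intervals are disjoint, so the removed entries $x_1>\cdots>x_m$ are pairwise distinct. Conversely, each value $x$ determines its box as the unique $j$ with $a_j<x\le a_{j+1}$.

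Since we insert a strictly decreasing sequence into the semistandard tableau $T_{\ge k+1}$, Proposition \ref{rsk} shows the added boxes form a vertical strip. They all lie in rows $\ge k+1$, and rows $1,\dots,k$ keep their lengths. Hence $(\lambda,\mu)$ is a $k$-rising pair.

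\textbf{Step 2: the output lies in $\SVRPP(\mu,k,k)$.} After all removals, row $k$ consists of the singletons $a_j$, and rows $\ge k+1$ form a semistandard tableau. Rows $<k$ are untouched, and the column relation between row $k-1$ and row $k$ only involves the unchanged minima $a_j$. So the one real issue is the strict inequality between row $k$ and the new row $k+1$: every new entry in row $k+1$ at column $c$ must exceed $a_c$.

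To get this I will maintain the following invariant. After inserting $x_1,\dots,x_{i-1}$, for every column $j\le\lambda_k$ that is occupied in row $k+1$, the current row-$(k+1)$ entry in column $j$ is strictly larger than every not-yet-inserted entry of $T(k,j)$. Initially this holds because $\mathrm{ceq}_k(T)=0$ gives $\max T(k,j)<T(k+1,j)$.

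Now insert $x=x_i$, which comes from box $j'$. By the invariant, the entry in column $j'$ of row $k+1$ exceeds $x$ if that position is occupied. If it is unoccupied, then $\lambda_{k+1}<j'$ and the row ends before $j'$. Either way the insertion in row $k+1$ places $x$ at some column $c\le j'$, so $a_c\le a_{j'}<x$. This is exactly the required strictness at the place where $x$ lands.

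The invariant is preserved for the following reasons. The only changed entry of row $k+1$ is at column $c$. Any remaining entries of $T(k,c)$ are smaller than $x$: either $c=j'$ and the remaining entries are smaller since we insert in decreasing order, or $c<j'$ and $\max T(k,c)\le a_{j'}<x$. Deeper rows are semistandard and are irrelevant to row $k$.

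\textbf{Step 3: the inverse map.} Given $U\in\SVRPP(\mu,k,k)$ and a $k$-rising pair $(\lambda,\mu)$, apply the converse part of Proposition \ref{rsk} to $U_{\ge k+1}$ and the vertical strip $\mu/\lambda$. This yields a unique tableau of shape $\lambda_{\ge k+1}$ and a unique sequence $x_1>\cdots>x_m$. Reattach each $x_i$ to the box $j$ of row $k$ with $a_j<x_i\le a_{j+1}$; this is forced by Step 1. Because the inverse is assembled from a unique decomposition and a forced placement, once it is shown to land in $\SVRPP(\lambda,k,k+1)$ it is automatically two-sided inverse to $\mathrm{di}_k$.

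\textbf{Main obstacle.} The hard part is checking that this reattachment yields a valid $T\in\SVRPP(\lambda,k,k+1)$, namely that $\max T(k,j)<T(k+1,j)$ in the recovered tableau; the row conditions in row $k$ hold automatically by the choice of intervals. I would handle it by running reverse bumping from $x_m$ up to $x_1$, and proving the same invariant as in Step 2 in reverse. The key input is the strict column condition $U(k+1,c)>a_c$ in $U$, which forces the column where $x_i$ exits row $k+1$ to be at most its assigned box $j$.

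I expect this reverse invariant to be the most delicate step. It requires comparing the exit column of reverse bumping with the interval containing $x_i$, and I would try to prove it by induction on $m$ together with the standard fact that successive bumping routes for decreasing insertions move weakly left.
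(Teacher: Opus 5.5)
Your proposal is correct and follows the same route as the paper. You check the forward output using Proposition \ref{rsk} and the row-$k$/row-$(k+1)$ column condition, then invert by reverse RSK on the vertical strip $\mu/\lambda$ with the forced placement into row $k$. Your Step 1 interval argument and Step 2 invariant supply details the paper leaves as ``straightforward''. One of these is $\mu_{k+1}\le\lambda_k$, needed for $\mu$ to be a partition; it comes from your bound $c\le j'\le\lambda_k$ in Step 2, not from Step 1 alone.

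The reverse check you flag as delicate closes in a few lines, with no need for bumping-route comparisons. Write $c_i$ for the column where $x_i$ enters row $k+1$ and $j_i$ for its assigned box. Entries at a fixed position of row $k+1$ only decrease under forward insertion, so $T(k+1,j)\ge U(k+1,j)>a_j$ and $x_i\ge U(k+1,c_i)>a_{c_i}$. The latter gives $x_i>a_1$, so the placement exists, and also $c_i\le j_i$. Finally, if $T(k+1,j_i)\le x_i$, then $x_i$ would land strictly right of column $j_i$, contradicting $c_i\le j_i$; hence $T(k+1,j_i)>x_i$.
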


\begin{myproof}
Let $T\in \SVRPP(\lambda, k, k+1)$, and $T'=\mathrm{di}_k(T)$. We first check that $T'$ belongs to  $\SVRPP(\mu, k, k)$, where $(\lambda, \mu)$ is a $k$-rising pair. Assume that $x_1>\cdots>x_m$ are the entries row $k$, as used in the definition of the dilation. Let $P_1$ be obtained from $T$ by removing $x_1$ and then inserting $x_1$ into $T_{\geq k+1}$. It is straightforward to check that  $P_1$ is a valid set-valued reverse plane partition with $\ceq_{\ge k}(P_1)=\mathbf 0$. Doing the same for $x_2,\ldots, x_m$ yields the image  $T'$ which is also a valid set-valued reverse plane partition with $\ceq_{\ge k}(T')=\mathbf 0$. It is also clear that  $\ex_{\ge k}(T')=\mathbf 0$. 
In view of Proposition \ref{rsk}, the   difference of shapes of $T'$ and $T$ is a vertical strip. So we conclude that $T'\in \SVRPP(\mu, k, k)$, where $(\lambda, \mu)$ is a $k$-rising pair.

Conversely, let $T'$ $\in \SVRPP(\mu, k, k)$, where $(\lambda, \mu)$ is a $k$-rising pair. Do the reverse RSK to the  entries in the vertical strip $\mu/\lambda$ from bottom to top, giving rise to a sequence $x_1>\cdots>x_m$ of bumped entries.  Place these $x_1>\cdots>x_m$ into the boxes in  row $k$ in a unique way such that  row $k$ keeps weakly increasing. It is not hard to check that the final outcome is a set-valued reverse plane partition $T$ in   $\SVRPP(\lambda, k, k+1)$. 
Moreover, by the above construction, we have $\mathrm{di}_{k}(T)=T'$. This allows us to conclude  the lemma. 
\end{myproof}

We next  define the contraction operation. For $k\geq 1$,  the \textit{{$k$-th contraction}}, denoted  $\mathrm{co}_k$, acts on the set  $\SVRPP(\lambda, k, k)$. Let  $T\in \SVRPP(\lambda, k, k)$. Equivalently, the subtableau $T_{\geq k}$ is an ordinary semistandard tableau.   The image  $\mathrm{co}_k(T)$ is constructed as follows. 
\begin{itemize}
    \item Replace  each box in the $k$-th row of $T$, whose entry equals the largest  entry of the set  immediately above it in row $k-1$, by an empty box. Then apply the jdt  slides to move these empty boxes  down and to the right until they exit the diagram. The slide order is from right to left, thus every original empty box can be viewed as an inside corner.
    This procedure can be best understood by an example as given in Figure     \ref{fig-aghafe} where $k=2$. 
\end{itemize}
\begin{figure}[h t]
    \centering 
\ytableausetup{boxsize=1.5em}

\[
\begin{aligned}
T
=&
\ctab{
\begin{ytableau}
1 & 1 & 12 &23 &4\\
*(gray!55)1 & *(gray!55)1 & 3 &*(gray!55)3 &5 \\
2 & 3 & 4 &6\\
4 & 4 & 5 \\
\end{ytableau}}
\rightarrow
\ctab{
\begin{ytableau}
1 & 1 & 12 &23 &4\\
*(gray!55) & *(gray!55) & 3 &*(gray!55)\circ &5 \\
2 & 3 & 4 &6\\
4 & 4 & 5 \\
\end{ytableau}}
\rightarrow
\ctab{
\begin{ytableau}
1 & 1 & 12 &23 &4\\
*(gray!55) & *(gray!55)\circ & 3 &5 &*(gray!55) \\
2 & 3 & 4 &6\\
4 & 4 & 5 \\
\end{ytableau}}
\rightarrow\\[1.5em]
&
\ctab{
\begin{ytableau}
1 & 1 & 12 &23 &4\\
*(gray!55)\circ  & 3 & 3 &5 &*(gray!55) \\
2 & 4 & 4 &6\\
4 & 5 & *(gray!55)\\
\end{ytableau}}
\rightarrow
\ctab{
\begin{ytableau}
1 & 1 & 12 &23 &4\\
2  & 3 & 3 &5 &*(gray!55) \\
4 & 4 & 4 &6\\
5 & *(gray!55) & *(gray!55)\\
\end{ytableau}}
\rightarrow
\ctab{
\begin{ytableau}
1 & 1 & 12 &23 &4\\
2  & 3 & 3 &5  \\
4 & 4 & 4 &6\\
5 \\
\end{ytableau}}
=
\mathrm{co}_2(T).
\end{aligned}
\]
\caption{An illustration of the contraction  algorithm.}
    \label{fig-aghafe}
\end{figure}

\begin{lemma}\label{co}
Fix a shape $\lambda$.  The map $\mathrm{co}_k$ is a bijection from $\SVRPP(\lambda, k, k)$ to the union of $\SVRPP(\mu, k-1, k)$, where $(\lambda, \mu)$ is a $(k-1)$-falling pair.
\end{lemma}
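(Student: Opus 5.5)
The plan mirrors the proof of Lemma \ref{di}. First check that $\mathrm{co}_k$ is well defined and lands in the claimed set. Then build an explicit inverse using reverse jeu de taquin slides.

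\textbf{Well-definedness.} Let $T\in\SVRPP(\lambda,k,k)$, so $T_{\ge k}$ is an ordinary semistandard tableau. Let $E$ be the set of boxes $(k,j)$ with $T(k,j)=\max T(k-1,j)$; these are exactly the boxes in row $k$ that contribute to $\ceq_{k-1}(T)$.

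First I will show that $E$ occupies a set of positions compatible with sliding. Take $(k,j)\in E$ and $(k,j')\notin E$ with $j'<j$. Suppose we had $T(k,j')=T(k,j)$. Then
\[
\max T(k-1,j')\le \max T(k-1,j)=T(k,j)=T(k,j'),
\]
and $T(k-1,j')\le T(k,j')$ as sets, so $\max T(k-1,j')=T(k,j')$. That would put $(k,j')$ in $E$, a contradiction. Hence within a run of equal entries in row $k$, the boxes of $E$ form an initial segment.

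Emptying the boxes of $E$ from right to left, each one becomes an inside corner of the remaining skew filling of rows $\ge k$ at the moment it is slid. The reason is that everything to its left in row $k$ has either already been slid or lies weakly left, and the rows above are not part of the semistandard region. So the standard jdt facts apply. Each slide keeps the region in rows $\ge k$ semistandard, and the vacated outside corners of successive slides, taken right to left, move weakly left and strictly down. Therefore the removed boxes $\lambda/\mu$ form a horizontal strip; this is the usual row-bumping/jdt lemma, dual to Proposition \ref{rsk}. Row $k-1$ and above are untouched, so $\lambda_i=\mu_i$ for $i\le k-1$, and $(\lambda,\mu)$ is a $(k-1)$-falling pair.

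Next check that the new row $k$ has no redundant entry relative to row $k-1$. After sliding, every entry now in row $k$ either came up from row $k+1$ or was an original non-$E$ entry. An entry coming up from row $k+1$ is strictly larger than the old row-$k$ entry in that column, and that old entry was $\ge\max T(k-1,\cdot)$. An original non-$E$ entry that shifts left stays $\ge$ the max above it, by the row-weak inequalities, and can equal that max only in the situation excluded by the run argument above. This also gives the SVRPP column condition between rows $k-1$ and $k$. Thus $\ceq_{\ge k-1}=\mathbf 0$ and $\ex_{\ge k}=\mathbf 0$, i.e.\ $\mathrm{co}_k(T)\in\SVRPP(\mu,k-1,k)$.

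\textbf{Inverse.} Given $T'\in\SVRPP(\mu,k-1,k)$ and a horizontal strip $\lambda/\mu$ lying in rows $\ge k$, perform reverse jdt slides starting from the boxes of $\lambda/\mu$, from left to right. Each box moves up and left until it reaches row $k$, where it must stop because rows above $k$ are not allowed to move. Then fill each resulting empty box $(k,j)$ with $\max T'(k-1,j)$.

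\textbf{Main obstacle.} Two points need care. First, each reverse slide must terminate in row $k$ rather than in some lower row, and the resulting set of empty positions must be exactly the type of set $E$ described above. Second, filling with $\max T'(k-1,j)$ must keep row $k$ weakly increasing and must leave a semistandard tableau below. Both come down to comparing entries near row $k$ with the maxima in row $k-1$, using the condition $\ceq_{k-1}(T')=0$, which gives strict inequalities $\max T'(k-1,j)<T'(k,j)$. I will try an inequality-chasing argument along the slide paths, as in Bandlow--Morse. Once this is established, the two constructions are mutually inverse because jdt slides are reversible (Section \ref{subsec:jdt}), which gives the bijection.
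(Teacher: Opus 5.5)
Your outline matches the paper's: jdt slides to land in $\SVRPP(\mu,k-1,k)$ with $\lambda/\mu$ a horizontal strip, and for the inverse, reverse slides taken left to right followed by refilling with row-$(k-1)$ maxima. However, the inverse you actually specify is wrong, and it fails exactly at the step you deferred.

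You stop each reverse slide as soon as the empty box enters row $k$. A forward slide of $\mathrm{co}_k$ can travel to the right along row $k$ before it turns down, so the reverse slide must be allowed to keep moving left inside row $k$. The correct rule is the paper's ``stop when the next step would move the empty box up into row $k-1$.'' It treats $\max T'(k-1,j)$ as the entry above $(k,j)$ and stops only when the reverse jdt step would pull that entry down, i.e.\ when $j=1$ or $\max T'(k-1,j)\ge T'(k,j-1)$, with ties counting as a stop.

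Here is a concrete failure. Take $k=2$ and $T$ of shape $(2,2,1)$ with rows $\{1\},\{5\}$; $1,6$; $7$. Then $E=\{(2,1)\}$, the slide moves $6$ left, and $\mathrm{co}_2(T)=T'$ has rows $\{1\},\{5\}$; $6$; $7$, with $\lambda/\mu=\{(2,2)\}$. Your procedure halts immediately at $(2,2)$ and writes $5$ there, giving the row $6,5$, which is not an SVRPP. The correct rule sees $6>5$, moves $6$ right, stops at $(2,1)$, and refills it with $1$, recovering $T$.

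This stopping condition is what resolves the obstacles you listed. It gives $\max T'(k-1,j)\ge T'(k,j-1)$ at every refilled box, so row $k$ stays weakly increasing. Every other entry remaining in row $k$ is strictly larger than the maximum above it: unmoved entries by $\ceq_{k-1}(T')=\mathbf 0$, and entries shifted right because they beat that maximum. Hence the refilled boxes are exactly the set $E$ of the reconstructed tableau, which is what makes the construction a two-sided inverse of $\mathrm{co}_k$.

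Two slips in your forward direction should also be fixed, though neither is load-bearing.
\begin{itemize}
\item Your run claim is backwards. Within a run of equal entries in row $k$, the boxes of $E$ form a final segment: row-$(k-1)$ maxima $1,2$ over row-$k$ entries $2,2$ give $E=\{(k,2)\}$. Your argument only produces two upper bounds on $\max T(k-1,j')$, not an equality. You do not need the claim anyway: an entry moving left from $(k,c+1)\notin E$ satisfies $T(k,c+1)>\max T(k-1,c+1)\ge\max T(k-1,c)$ directly.
\item The cells vacated by successive slides move strictly left and weakly down; in the paper's example they are $(2,5),(4,3),(4,2)$. ``Weakly left and strictly down'' would describe a vertical strip, not the horizontal strip you conclude.
\end{itemize}
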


\begin{myproof}
Let  $T\in \SVRPP(\l, k, k)$.    We can forget all the entries of $T$ in row $k-1$ except the largest one in every box and add them back after the whole process. 
Suppose that the empty boxes in row $k$ are $(k, j_1),\ldots, (k,j_m)$, listed from right  to left. By the construction of $\co_k$, we successively perform the jdt algorithm starting from the empty  boxes in the order  $(k, j_1),\ldots, (k,j_m)$. A crucial observation is that   the $m$ slide paths do not intersect,  distributed from upper right to lower left.  
Denote by $T'$ the resulting filling.  Now it is a routine task to check that $T'$ is a set-valued reverse plane partition belonging to $\SVRPP(\mu, k-1, k)$, where $\lambda/\mu$ is a horizontal strip (so $(\lambda, \mu)$ is a $(k-1)$-falling pair).

Conversely, consider a $T'\in \SVRPP(\mu, k-1, k)$, where $(\lambda, \mu)$ is a $(k-1)$-falling pair. Then do the reverse jdt slides to these empty boxes in the horizontal strip $\lambda/\mu$ from left to right, and terminate at row $k$ (which means the next jdt slide will move the empty box up into row $k-1$). Then fill every  empty box in row $k$ with the largest  entry in the box immediately  above it. It is not hard to check that the final outcome is a set-valued reverse plane partition $T$ in $\SVRPP(\lambda, k, k)$. Moreover, the above construction is indeed the reverse map of   $\co_{k}$. So the proof is complete. 
\end{myproof}

We can now describe our  dilation-contraction  algorithm 
which transforms a  set-valued reverse plane  partition of shape $\lambda$ to a semistandard tableau of shape $\mu$ together with an $\ell(\lambda)$-oscillating sequence from $\lambda$ to $\mu$. 
Let $T\in \SVRPP(\lambda)$. We first use the dilation and contraction operations to generate a sequence $(T_0, T_1,\ldots, T_{2\ell(\lambda)-1})$ of set-valued reverse plane partitions.  Set $l=\ell(\lambda)$ and $T_0=T$. 
The tableaux $T_i$ for $i=1,\ldots, 2l-1$ are constructed as follows.
\begin{itemize}
    \item 
 Let $1\leq i\leq 2l-1$. If $i=2k-1$ is odd, then let 
 \[
 T_i=\mathrm{di}_{l+1-k}(T_{i-1})
 \]
be obtained from $T_{i-1}$ by applying the $(l+1-k)$-th dilation.   If $i=2k$ is even, then let    
   \[
 T_i=\mathrm{co}_{l+1-k}(T_{i-1})
 \]
be obtained from $T_{i-1}$ by applying the $(l+1-k)$-th contraction.  
That is,
\[
T_0 \xrightarrow{\text{$\mathrm{di}_l$}}T_1\xrightarrow{\text{$\mathrm{co}_l$}}T_2\xrightarrow{\text{$\mathrm{di}_{l-1}$}}T_3\xrightarrow{\text{$\mathrm{co}_{l-1}$}}T_4\xrightarrow{\text{$\mathrm{di}_{l-2}$}}\cdots\xrightarrow{\text{$\mathrm{co}_{2}$}}T_{2l-2}\xrightarrow{\text{$\mathrm{di}_{1}$}}T_{2l-1}.
\]

\end{itemize}
Figure   \ref{fig-fb-9jnfj} gives  a concrete example to illustrate the dilation-contraction algorithm.
\begin{figure}[h t]
    \centering  
\[
\begin{aligned}
&T_0
=\ctab{
\begin{ytableau}
12 & 23 & 3 \\
2  & 3  & 45 \\
2\textcolor{red}{4}
\end{ytableau}}
\xrightarrow{\,\operatorname{di}_3\,}
T_1
=\ctab{
\begin{ytableau}
12 & 23 & 3 \\
2  & 3  & 45 \\
\textcolor{blue}{2} \\
4
\end{ytableau}}
\xrightarrow{\,\operatorname{co}_3\,}
T_2
=\ctab{
\begin{ytableau}
12 & 23 & 3 \\
2  & 3  & 4\textcolor{red}{5} \\
4
\end{ytableau}}
\xrightarrow{\,\operatorname{di}_2\,}\\
&T_3
=\ctab{
\begin{ytableau}
12 & 23 & 3 \\
\textcolor{blue}{2}  & \textcolor{blue}{3}  & 4 \\
4  & 5
\end{ytableau}}
\xrightarrow{\,\operatorname{co}_2\,}
T_4
=\ctab{
\begin{ytableau}
1\textcolor{red}{2} & 2\textcolor{red}{3} & 3 \\
4  & 4 \\
5
\end{ytableau}}
\xrightarrow{\,\operatorname{di}_1\,}
T_5
=\ctab{
\begin{ytableau}
1 & 2 & 3 \\
2 & 4 \\
3 \\
4 \\
5
\end{ytableau}}.
\end{aligned}
\]
\caption{An illustration of the dilation-contraction  algorithm.}
    \label{fig-fb-9jnfj}
\end{figure}

We can now construct the map $\Phi$
in Theorem \ref{thm:schur-expansion}. For $T\in \SVRPP(\lambda)$, let  $(T_0, T_1,\ldots, T_{2\ell(\lambda)-1})$ be the sequence of set-valued reverse partitions after applying the dilation-contraction algorithm.
Notice that $T_{2\ell(\lambda)-1}$ is an ordinary semistandard tableau. For $0\leq i\leq 2\ell(\lambda)-1$, write $\lambda^i$ for the shape of $T_i$. Define $\Phi(T)=(T', S)$ where $T'=T_{2\ell(\lambda)-1}$ and 
\[
S=(\lambda^0, \lambda^1,\ldots, \lambda^{2\ell(\lambda)-1}).
\]

\noindent
{\it Proof of Theorem \ref{thm:schur-expansion}.}
By Lemmas  \ref{di} and \ref{co}, it is an easy task to justify   that the map $\Phi$ is indeed a bijection from  $\SVRPP(\l)$ to 
\[\bigsqcup_\m \SSYT(\m)\times \OS^{\ell(\lambda)}(\l\rightarrow \m).\] 
Moreover, from the definitions of $\ircont(T)$, $\ex(T)$, $\ceq(T)$ and $\wt(S)$, we can directly check    that 
\[\mathbf{x}^{\ircont(T)}\mathbf{w}^{\ex(T)}\mathbf{t}^{\ceq(T)}=\mathbf{x}^{\wt(T')} {\wt(S)}.\]
This completes the proof. 
\qed

\begin{remark}\label{ren-jai}
For a  partition $\lambda$, let  $\overline{\l}=(\overline{\l}_1,\overline{\l}_2,\ldots)$ be given by
$$\overline{\l}_i:=\min\{\l_j + j - 1: 1\leq j\leq i\}.$$ 
By  \cite[Section 4]{GuoKangLiu2025}, 
\[
(\l_1)\subseteq \m\subseteq \overline{\l}\Longleftrightarrow K_{\l,\m}(\mathbf{t},\mathbf{w})\neq 0.
\]
(In \cite[Section 4]{GuoKangLiu2025}, we defined $\overline{\l}^{(n)}$ and here we need to let $n\rightarrow \infty$ to obtain $\overline{\l}$.)
It should be noted that $\overline{\l}$ is   regarded as a weakly decreasing sequence since it  has  infinitely many nonzero parts, and the notation $\m\subseteq \overline{\l}$ means that $\mu_i\leq \overline{\l}_i$ for $i\geq 1$.
Combining Corollary \ref{ciri-j0},
we have 
\[(\l_1)\subseteq \m\subseteq \overline{\l} \Longleftrightarrow \OS^{\ell(\lambda)}(\l\rightarrow \m)\neq \emptyset  .\]

\end{remark}

\subsection{Distributive lattice of oscillating sequences}

We show that oscillating sequences admit a natural distributive lattice structure.  A crucial  observation is as follows.

\begin{lemma}\label{lemn-nb-0}
If $(\lambda, \rho)$ and $(\mu, \gamma)$ are   $k$-rising pairs (resp., $k$-falling pairs), then $(\lambda \vee \mu, \rho \vee \gamma)$ and 
$(\lambda \wedge \mu, \rho \wedge \gamma)$ 
are also  $k$-rising pairs  (resp., $k$-falling pairs).
\end{lemma}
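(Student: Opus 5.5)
By the symmetry between the two notions (a $k$-falling pair $(\lambda,\mu)$ is a $k$-rising pair $(\mu,\lambda)$ with ``vertical strip'' replaced by ``horizontal strip''), we treat the rising case in detail; the falling case follows the same argument, or is obtained by conjugating partitions, where only the condition $\lambda_i=\mu_i$ for $i\le k$ has to be rephrased.

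So let $(\lambda,\rho)$ and $(\mu,\gamma)$ be $k$-rising pairs. The plan is to rewrite each defining condition as coordinatewise inequalities on the parts, since $\vee$ and $\wedge$ act coordinatewise and $\max$, $\min$ are monotone.

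\emph{Inclusion and the first $k$ rows.} From $\lambda_i\le\rho_i$ and $\mu_i\le\gamma_i$ we obtain $\max\{\lambda_i,\mu_i\}\le\max\{\rho_i,\gamma_i\}$ and $\min\{\lambda_i,\mu_i\}\le \min\{\rho_i,\gamma_i\}$. For $i\le k$ we have $\lambda_i=\rho_i$ and $\mu_i=\gamma_i$, so equality holds in both.

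\emph{Vertical strip condition.} The key reformulation is: for partitions $\alpha\subseteq\beta$, the skew shape $\beta/\alpha$ is a vertical strip if and only if $\beta_i\le\alpha_i+1$ for all $i$. Indeed, a row containing two boxes of $\beta/\alpha$ is exactly an index with $\beta_i-\alpha_i\ge 2$. Given this, $\rho_i\le\lambda_i+1$ and $\gamma_i\le\mu_i+1$ imply
\[
\max\{\rho_i,\gamma_i\}\le \max\{\lambda_i,\mu_i\}+1,\qquad \min\{\rho_i,\gamma_i\}\le \min\{\lambda_i,\mu_i\}+1,
\]
which is the required condition for both $(\lambda\vee\mu,\rho\vee\gamma)$ and $(\lambda\wedge\mu,\rho\wedge\gamma)$. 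That $\vee$ and $\wedge$ of partitions are again partitions is standard (Young lattice).

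\emph{Falling case.} Here the analogous reformulation is: for $\beta\subseteq\alpha$, the skew shape $\alpha/\beta$ is a horizontal strip if and only if the parts interlace, $\alpha_{i+1}\le\beta_i\le\alpha_i$ for all $i$. Interlacing consists of inequalities between coordinates of the two partitions, shifted by one index. With $(\lambda,\rho)$ and $(\mu,\gamma)$ falling pairs, we have $\lambda_{i+1}\le\rho_i$ and $\mu_{i+1}\le\gamma_i$. Taking $\max$ gives $\max\{\lambda_{i+1},\mu_{i+1}\}\le\max\{\rho_i,\gamma_i\}$, and taking $\min$ gives the corresponding inequality for the meets. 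The upper bound $\rho_i\le\lambda_i$ is handled the same way. The equality in rows $i\le k$ is as in the rising case.

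The only step needing care is the strip characterization: for a vertical strip the bound is $\beta_i-\alpha_i\le1$, while for a horizontal strip it is the interlacing condition. Once these are stated, monotonicity of $\max$ and $\min$ finishes the argument.
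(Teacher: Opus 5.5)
Your proof is correct and follows essentially the same route as the paper: equality in the first $k$ rows, preservation of inclusion, and the bound $\rho_j-\lambda_j\le 1$ pushed through $\max$ and $\min$. The paper only calls the falling case analogous, and your interlacing characterization $\alpha_{i+1}\le\beta_i\le\alpha_i$ is the right way to carry it out, since a bound on row differences alone does not characterize horizontal strips.
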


\begin{myproof}
We only  verify the case for $k$-rising pairs. The  proof for  $k$-falling pairs can be dealt with in an analogous  manner. 

For
$1 \le i \le k$, it follows from the definition that     $\lambda_i = \rho_i$ and $\mu_i = \gamma_i$, and so we have 
\[
(\lambda \vee \mu)_i
=
\max\{\lambda_i,\mu_i\}
=
\max\{\rho_i,\gamma_i\}
=
(\rho \vee \gamma)_i,
\]
and
\[
(\lambda \wedge \mu)_i
=
\min\{\lambda_i,\mu_i\}
=
\min\{\rho_i,\gamma_i\}
=
(\rho \wedge \gamma)_i.
\]
It remains to show that both $(\rho \vee \gamma)/(\lambda \vee \mu)$ and $(\rho \wedge \gamma)/(\lambda \wedge \mu)$ 
are  vertical strips.

Notice that if $\lambda \subseteq \rho$ and
$\mu \subseteq \gamma$, then
\[
\lambda \vee \mu \subseteq \rho \vee \gamma
\qquad\text{and}\qquad
\lambda \wedge \mu \subseteq \rho \wedge \gamma.
\]
Since both $\rho/\lambda$ and $\gamma/\mu$ are   vertical strips, we see that for $j\geq 1$,
\[
\rho_j-\lambda_j\le 1,
\qquad
\gamma_j-\mu_j\le 1.
\] 
Thus, for  $j\geq 1$, we deduce that 
\begin{align*}
(\rho \vee \gamma)_j-(\lambda \vee \mu)_j
&=
\max\{\rho_j,\gamma_j\}
-
\max\{\lambda_j,\mu_j\}\\
&=
\max\{
\rho_j-\max\{\lambda_j,\mu_j\},
\,
\gamma_j-\max\{\lambda_j,\mu_j\}
\}\\
&\le
\max\{\rho_j-\lambda_j,\gamma_j-\mu_j\}
\le 1,
\end{align*}
and 
\begin{align*}
(\rho \wedge \gamma)_j-(\lambda \wedge \mu)_j
&=
\min\{\rho_j,\gamma_j\}
-
\min\{\lambda_j,\mu_j\}\\
&=
\min\{\rho_j,\gamma_j\}
+
\max\{-\lambda_j,-\mu_j\}\\
&=
\max\{
\min\{\rho_j,\gamma_j\}-\lambda_j,\,
\min\{\rho_j,\gamma_j\}-\mu_j
\}\\
&\le
\max\{\rho_j-\lambda_j,\gamma_j-\mu_j\}
\le 1,
\end{align*}
which implies that   $(\rho \vee \gamma)/(\lambda \vee \mu)$ and $(\rho \wedge \gamma)/(\lambda \wedge \mu)$ 
are  vertical strips. 
\end{myproof}

Denote 
\[
\OS^{l}(\l)=\bigcup_{\mu} \OS^l(\lambda\to\mu)=\bigcup_{\mu: (\l_1)\subseteq \m\subseteq \overline{\l}} \OS^l(\lambda\to\mu),
\]
where the second equality follows from 
Remark \ref{ren-jai}.
We also use  the notation
\[
\mathrm{OS}^l_k=\bigcup_{\lambda:\,\ell(\lambda)\le l,\,\lambda_1\le k}\mathrm{OS}^l(\lambda),
\]
and 
\begin{equation}\label{aghkhn-afn-0}
\OS^{l,m}_k=\bigcup_{\substack{\l:\ell(\l)\leq l\\\l_1\leq k}}\ \bigcup_{\substack{\mu:\ell(\m)\leq m\\ (\l_1)\subseteq \m\subseteq \overline{\l}}}\OS^l(\l\rightarrow \m).
\end{equation}

The following is a consequence of Lemma \ref{lemn-nb-0}.

\begin{corollary}\label{cln-sl}
For two $l$-oscillating sequences $S=(\lambda^0, \lambda^1,\ldots, \lambda^{2l-1})$ and $S'=(\mu^0, \mu^1,\ldots, \mu^{2l-1})$,
define 
\[
S\vee S'=(\lambda^0\vee \mu^0, \lambda^1\vee \mu^1,\ldots, \lambda^{2l-1}\vee \mu^{2l-1})
\]
and 
\[
S\wedge S'=(\lambda^0\wedge \mu^0, \lambda^1\wedge \mu^1,\ldots, \lambda^{2l-1}\wedge \mu^{2l-1}).
\]
Then $S\vee S'$ and $S\wedge S'$ are also $l$-oscillating sequences. With the above join and meet, the sets $\OS^l(\lambda\to\mu)$, $\OS^l(\lambda)$, 
$\OS^l_k$ and  $\OS^{l,m}_k$ are all distributive lattices. In particular, the first and the last are finite distributive lattices. 
\end{corollary}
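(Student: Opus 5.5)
The plan is to apply Lemma \ref{lemn-nb-0} componentwise and then check that the boundary conditions defining each of the four sets are preserved; distributivity will be inherited from the Young lattice.

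First, let $S=(\lambda^0,\ldots,\lambda^{2l-1})$ and $S'=(\mu^0,\ldots,\mu^{2l-1})$ be $l$-oscillating sequences.
\begin{itemize}
\item For each $1\le i\le l$, both $(\lambda^{2i-2},\lambda^{2i-1})$ and $(\mu^{2i-2},\mu^{2i-1})$ are $(l-i+1)$-rising pairs. Lemma \ref{lemn-nb-0} then shows that $(\lambda^{2i-2}\vee\mu^{2i-2},\lambda^{2i-1}\vee\mu^{2i-1})$ and the corresponding meet pair are again $(l-i+1)$-rising pairs.
\item The falling steps $(\lambda^{2j-1},\lambda^{2j})$ are handled the same way, using the falling-pair case of Lemma \ref{lemn-nb-0}.
\end{itemize}
Hence $S\vee S'$ and $S\wedge S'$ are $l$-oscillating sequences, from $\lambda^0\vee\mu^0$ to $\lambda^{2l-1}\vee\mu^{2l-1}$ (and similarly for the meet).

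Second, we check closure of each set.
\begin{itemize}
\item For $\OS^l(\lambda\to\mu)$: both endpoints are fixed, so the join and meet have the same endpoints $\lambda$ and $\mu$.
\item For $\OS^l(\lambda)$: the starting point is fixed. The endpoints of oscillating sequences are automatically admissible by Remark \ref{ren-jai}, so closure holds.
\item For $\OS^l_k$: the conditions $\ell(\lambda^0)\le l$ and $\lambda^0_1\le k$ are preserved by componentwise max and min. For the meet this is clear. For the join, $\ell(\lambda\vee\mu)=\max(\ell(\lambda),\ell(\mu))\le l$ and $(\lambda\vee\mu)_1=\max(\lambda_1,\mu_1)\le k$.
\item For $\OS^{l,m}_k$: we additionally need $\ell(\lambda^{2l-1})\le m$, and this is preserved by the same reasoning. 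The condition $(\lambda_1)\subseteq\mu\subseteq\overline{\lambda}$ is automatic by Remark \ref{ren-jai}, since any oscillating sequence from $\lambda$ to $\mu$ forces it.
\end{itemize}

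Third, the lattice axioms and distributivity. The operations are componentwise join and meet in the Young lattice $\mathcal P$, which is distributive. Each set is therefore a subset of $\mathcal P^{2l}$ closed under the product operations, i.e.\ a sublattice of a distributive lattice, and so is itself a distributive lattice. The induced order is the componentwise inclusion order.

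Finally, finiteness. $\OS^l(\lambda\to\mu)$ is finite, as already noted after Definition \ref{def:oscillating-sequence}. For $\OS^{l,m}_k$, there are finitely many $\lambda$ with $\ell(\lambda)\le l$ and $\lambda_1\le k$. For each such $\lambda$, the endpoint $\mu$ satisfies $\mu_1=\lambda_1\le k$ (first parts are constant along the sequence) and $\ell(\mu)\le m$, so there are finitely many $\mu$. Each $\OS^l(\lambda\to\mu)$ is finite, so the union is finite.

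The only mildly delicate point is not omitting the endpoint and length constraints, so the main obstacle is essentially bookkeeping. The substantive content is entirely Lemma \ref{lemn-nb-0}.
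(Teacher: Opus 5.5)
Your proposal is correct and follows the paper's approach: the paper gives no separate argument and simply records the statement as a consequence of Lemma \ref{lemn-nb-0} applied to each rising and falling step. Your checks of the endpoint and length constraints, the observation that each set is a sublattice of $\mathcal P^{2l}$ (hence distributive), and the finiteness count via $\mu_1=\lambda_1\le k$ and $\ell(\mu)\le m$ supply the bookkeeping the paper leaves implicit.
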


\section{Multivariate Reuter--Lov\'asz--Saks inequality}\label{sec:lpp}

In this section, we   present  a {multivariate  Reuter--Lov\'asz--Saks type inequality} which is another  ingredient in the proof of   Theorem \ref{main-1}. 
For the history on the classical  Reuter--Lov\'asz--Saks inequality, we refer the reader to \cite{CCPS} for an overview.

Throughout this section, let $({L}, \wedge, \vee)$ be a {\it finite} distributive lattice. 
Fix variables $\mathbf{q}=(q_1, \ldots, q_N)$, and \emph{modular functions} 
$\mathbf{r}=(r_1, \ldots, r_N)$, that is, for $1\leq i\leq N$,  $r_i: {L} \to \mathbb{Z}_{\geq 0}$ satisfies 
\begin{equation}\label{req-12}
r_i(x) + r_i(y) = r_i(x \vee y) + r_i(x \wedge y) \quad \text{for } x, y \in {L}.
\end{equation}
For $x \in  {L}$, write
 $\mathbf q^{\mathbf{r}(x)} := q_1^{r_1(x)} \cdots q_N^{r_N(x)}$.
 For a function $\rho: {L} \to \mathbb{R}_{\geq 0}$ and subset $X \subseteq {L}$, define
\begin{equation*}
\rho_{\langle \mathbf q, \mathbf{r} \rangle}(X) := \sum_{x \in X} \rho(x) \mathbf q^{\mathbf{r}(x)} \in \mathbb{R}_{\geq 0}[q_1, \ldots, q_N].
\end{equation*}
For two polynomials $f, g$ in $\mathbf q$,   write $f\leq^{\mathbf q}g$
if
\[
g-f\in \mathbb R_{\ge 0}[\mathbf q].
\]

For two subset $X, Y\subseteq L$, let $X \vee Y=\{x\vee y\colon x\in X, y\in Y\}$ and 
$X \wedge  Y=\{x\wedge y\colon x\in X, y\in Y\}$. The following is a  multivariate Reuter--Lov\'asz--Saks type inequality. 

\begin{theorem}[multivariate RLS type  inequality]\label{WRLS}
Let $(L,\wedge,\vee)$ be a finite distributive lattice, and let
$r_1,\ldots,r_N:L\to \mathbb Z_{\geq 0}$
be modular functions satisfying \eqref{req-12}. 
Suppose that for every subset
$H\subseteq[n]:=\{1,\ldots, n\}$ , we are given subsets
\[
A_H,\ B_H,\ C_H,\ D_H\subseteq L
\]
such that for any  $I,J\subseteq[n]$, 
$A_I\vee B_J\subseteq C_{I\cup J}$ and $ 
A_I\wedge B_J\subseteq D_{I\cap J}.$
For a subset $X\subseteq L$, define
\[
X(\mathbf q)=\sum_{x\in X}\mathbf q^{\mathbf r(x)}.
\]
Then
\[
\sum_{H\subseteq[n]}
A_H(\mathbf q)B_{\overline{H}}(\mathbf q)
\le^{\mathbf q}
\sum_{H\subseteq[n]}
C_H(\mathbf q)D_{\overline{H}}(\mathbf q), \ \ \ \text{where $\overline{H}=[n]\setminus H$}.
\]
\end{theorem}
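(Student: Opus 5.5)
The plan is to reduce the multivariate statement to a coefficientwise statement and then run the standard Ahlswede--Daykin induction on a product structure. First, since each $r_i$ is modular and takes values in $\mathbb Z_{\ge0}$, we have $\mathbf r(x)+\mathbf r(y)=\mathbf r(x\vee y)+\mathbf r(x\wedge y)$ for all $x,y\in L$. It therefore suffices to compare, for each exponent vector $\gamma\in\mathbb Z_{\ge0}^N$, the coefficient of $\mathbf q^\gamma$ on both sides. The left-hand coefficient counts triples $(H,a,b)$ with $a\in A_H$, $b\in B_{\overline H}$ and $\mathbf r(a)+\mathbf r(b)=\gamma$. The right-hand coefficient counts triples $(H,c,d)$ with $c\in C_H$, $d\in D_{\overline H}$ and $\mathbf r(c)+\mathbf r(d)=\gamma$.

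To make this count amenable to lattice methods, we work in the finite distributive lattice $\widehat L=L\times 2^{[n]}$, with product order and Boolean lattice on the second factor. Define
\[
\mathcal A=\{(a,H):a\in A_H\},\qquad \mathcal B=\{(b,H):b\in B_H\},
\]
\[
\mathcal C=\{(c,H):c\in C_H\},\qquad \mathcal D=\{(d,H):d\in D_H\}.
\]
The hypotheses $A_I\vee B_J\subseteq C_{I\cup J}$ and $A_I\wedge B_J\subseteq D_{I\cap J}$ say exactly that $\mathcal A\vee\mathcal B\subseteq\mathcal C$ and $\mathcal A\wedge\mathcal B\subseteq\mathcal D$ in $\widehat L$.

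The constraint pairing $H$ with $\overline H$ is handled by the function $h(H)=|H|$, which is modular on $2^{[n]}$. The sum $\sum_H A_H(\mathbf q)B_{\overline H}(\mathbf q)$ is not simply $\mathcal A(\mathbf q)\mathcal B(\mathbf q)$, so we introduce an extra variable $z$ and the grading $\mathbf q^{\mathbf r(x)}z^{|H|}$. Pairs with complementary sets are not characterized by $|I|+|J|=n$ alone; they need $I\cup J=[n]$ and $I\cap J=\emptyset$. We therefore use $n$ coordinatewise variables $z_1,\dots,z_n$, weighting $(x,H)$ by $\mathbf q^{\mathbf r(x)}\prod_{i\in H}z_i$. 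Each indicator $[i\in H]$ is modular on $\widehat L$. A pair $(I,J)$ is complementary iff each $z_i$ appears with total exponent exactly $1$. For any lattice pair, $[i\in I]+[i\in J]=[i\in I\cup J]+[i\in I\cap J]$. Hence the complementary pairs are exactly those with multidegree $(1,\dots,1)$ in $z$, and likewise on the $\mathcal C,\mathcal D$ side.

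The theorem will then follow from the \emph{unweighted multivariate Ahlswede--Daykin statement}: for finite distributive $\widehat L$, modular $\mathbf s:\widehat L\to\mathbb Z_{\ge0}^M$, and $\mathcal A\vee\mathcal B\subseteq\mathcal C$, $\mathcal A\wedge\mathcal B\subseteq\mathcal D$,
\[
\mathcal A(\mathbf u)\mathcal B(\mathbf u)\le^{\mathbf u}\mathcal C(\mathbf u)\mathcal D(\mathbf u).
\]
Extracting the $z_1\cdots z_n$ coefficient of this inequality gives the theorem.

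This unweighted statement is the key step and the main obstacle. It does not follow from the classical four functions theorem applied to the weights $\mathbf u^{\mathbf s}$ with numerical $\mathbf u$: that only gives an inequality of evaluations, not coefficientwise positivity. The plan is to reduce coefficientwise positivity to a fixed-degree injection-type statement. Fix $\gamma$. We need
\[
\#\{(a,b)\in\mathcal A\times\mathcal B:\mathbf s(a)+\mathbf s(b)=\gamma\}\le\#\{(c,d)\in\mathcal C\times\mathcal D:\mathbf s(c)+\mathbf s(d)=\gamma\}.
\]
By Birkhoff's theorem, embed $\widehat L$ as a sublattice of a Boolean lattice $2^{P}$ of down-sets of a poset $P$. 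Each modular function then satisfies $s_j(x)=s_j(\hat 0)+\sum_{p\in x}\omega_j(p)$ for some valuation $\omega_j$ on $P$; this is standard for modular functions on distributive lattices. Extend $\mathcal A,\dots,\mathcal D$ to $2^P$ and note that the weights $\omega_j(p)$ may be negative there. To avoid this, follow the original induction proof of Ahlswede--Daykin on the number of elements of $P$, one coordinate at a time. At each step, split each set according to membership of the last element $p$, and use the polynomial two-element inequality
\[
\alpha_0\beta_0,\ \alpha_0\beta_1,\ \alpha_1\beta_0,\ \alpha_1\beta_1 \ \text{vs.}\ \gamma_0\delta_0,\ \gamma_1\delta_0,\ \gamma_1\delta_1,
\]
where each factor is a polynomial with nonnegative coefficients and the extra monomial factor $\mathbf u^{\omega(p)}$ is attached to the index-$1$ parts. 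The four-functions base case, $\alpha_0\beta_0\le\gamma_0\delta_0$, $\alpha_1\beta_1\le\gamma_1\delta_1$, $\alpha_0\beta_1+\alpha_1\beta_0\le\gamma_1\delta_0+\gamma_1\delta_0$-type combinations, must then be verified coefficientwise. For $0/1$ indicator-type data (sets rather than arbitrary functions) this should work, because we can use the set-theoretic form: $\mathcal A_0\vee\mathcal B_1\subseteq\mathcal C_1$, $\mathcal A_0\wedge\mathcal B_1\subseteq\mathcal D_0$, and so on. The positivity argument of the base case, $\alpha_0\beta_1+\alpha_1\beta_0\le\gamma_1\delta_0+\gamma_0\delta_1$ given the diagonal inequalities, is where I expect the difficulty, since the classical proof uses division and AM--GM-type manipulations unavailable coefficientwise. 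If that fails, I would instead follow the paper's hint and adapt \cite[Claim 6.3]{CP23}, proving the fixed-$\gamma$ count by a direct injection built from the Daykin--Kleitman lattice "$\mathcal A\times\mathcal B\to(\mathcal A\vee\mathcal B)\times(\mathcal A\wedge\mathcal B)$" injection. That injection preserves $\mathbf s(a)+\mathbf s(b)$ by modularity, which would give the graded inequality directly.
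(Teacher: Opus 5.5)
Your reduction is the same as the paper's. You pass to $L\times 2^{[n]}$, weight $(x,H)$ by $\mathbf q^{\mathbf r(x)}\prod_{i\in H}z_i$ using that the indicators $[i\in H]$ are modular, check $\mathcal A\vee\mathcal B\subseteq\mathcal C$ and $\mathcal A\wedge\mathcal B\subseteq\mathcal D$, and extract the coefficient of $z_1\cdots z_n$. All of that is correct.

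\textbf{The gap.} The step you yourself call the main obstacle is never proved: the coefficientwise (multivariate) Ahlswede--Daykin inequality for sets. The paper does not prove it either. It applies the Chan--Pak multivariate Ahlswede--Daykin theorem (Theorem~\ref{MAD}, from \cite[Theorem 6.1]{CP23}) on $L\times 2^{[n]}$, with $\alpha=\beta=\rho=\delta\equiv 1$ and the modular functions $\widetilde r_1,\ldots,\widetilde r_N,\varepsilon_1,\ldots,\varepsilon_n$. Quoting that theorem is the missing idea. With it, your argument is complete and identical to the paper's; without it, what you have is a reduction, not a proof.

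\textbf{Why your self-contained route fails as sketched.} However you organize the Ahlswede--Daykin induction, the data stop being $0/1$ after one step. Either the projected functions become sums of two monomials, or the halves $\mathcal A_i(\mathbf u),\ldots$ are arbitrary polynomials. So your "set-theoretic form" is not available, and you would need a two-point lemma for polynomial data. That lemma says: from
\[
\alpha_0\beta_0\le\gamma_0\delta_0,\qquad \alpha_0\beta_1,\ \alpha_1\beta_0\le\gamma_1\delta_0,\qquad \alpha_1\beta_1\le\gamma_1\delta_1
\]
holding coefficientwise, conclude $\alpha_0\beta_1+\alpha_1\beta_0\le\gamma_1\delta_0+\gamma_0\delta_1$ coefficientwise. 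This is false. Take
\[
\alpha_0=\beta_0=\gamma_0=\gamma_1=1,\qquad \alpha_1=\beta_1=q,\qquad \delta_0=1+q,\qquad \delta_1=q^2 .
\]
All four hypotheses hold, yet $\alpha_0\beta_1+\alpha_1\beta_0=2q$ while $\gamma_1\delta_0+\gamma_0\delta_1=1+q+q^2$. The full product comparison also fails when the attached monomial is $z=q$. So any working induction must carry more structure than the four product inequalities, and that is exactly the nontrivial content of the Chan--Pak theorem.

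\textbf{The fallback does not close the gap either.} The map $(a,b)\mapsto(a\vee b,a\wedge b)$ preserves $\mathbf s(a)+\mathbf s(b)$, but it is not injective. You do not construct any grading-preserving injection $\mathcal A\times\mathcal B\hookrightarrow(\mathcal A\vee\mathcal B)\times(\mathcal A\wedge\mathcal B)$. The Daykin--Kleitman inequality is normally derived from the four functions theorem, not from such a map, so it cannot simply be quoted here.
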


To give a proof of Theorem  \ref{WRLS}, we need the multivariate   Ahlswede--Daykin inequality due to Chan and Pak \cite[Theorem 6.1]{CP23}.

\begin{theorem}[\normalfont{\cite[Theorem 6.1]{CP23}}]\label{MAD} 
Let $({L}, \wedge, \vee)$ be a finite distributive lattice, and let 
$\alpha, \beta, \rho, \delta: {L} \to \mathbb{R}_{\geq 0}$ be nonnegative functions on ${L}$. 
Suppose that  
\begin{equation*}
\alpha(x) \cdot \beta(y) \leq \rho(x \vee y) \cdot \delta(x \wedge y) \quad \text{for } x, y \in  {L}.
\end{equation*}
Then, for  $X, Y \subseteq {L}$,  we have 
\begin{equation*}
\alpha_{\langle \mathbf q, \mathbf{r} \rangle}(X) \cdot \beta_{\langle \mathbf q, \mathbf{r} \rangle}(Y) \leq^ {\mathbf q}  \rho_{\langle \mathbf q, \mathbf{r} \rangle}(X \vee Y) \cdot \delta_{\langle \mathbf q, \mathbf{r} \rangle}(X \wedge Y) .
\end{equation*}
\end{theorem}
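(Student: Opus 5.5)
The plan is to reduce to the Boolean lattice, absorb the monomial weights into the four functions, and then run the classical induction proof of the four functions theorem with values in $\mathbb R_{\ge 0}[\mathbf q]$ ordered by $\le^{\mathbf q}$.

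\textbf{Step 1: reductions.} First I reduce to $X=Y=L$. Replace $\alpha$ by $\alpha\cdot 1_X$, $\beta$ by $\beta\cdot 1_Y$, $\rho$ by $\rho\cdot 1_{X\vee Y}$ and $\delta$ by $\delta\cdot 1_{X\wedge Y}$. The hypothesis survives: if $x\in X$ and $y\in Y$, then $x\vee y\in X\vee Y$ and $x\wedge y\in X\wedge Y$; otherwise the left side of the hypothesis is $0$. Next, by Birkhoff's theorem, $L$ is a sublattice of a Boolean lattice $2^{[n]}$. Extending all four functions by zero keeps the hypothesis. Each modular $r_i$ must also be extended modularly. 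I would do this through the valuation description: $r_i(x)=r_i(\hat 0)+\sum_{p\in x}\omega_i(p)$ for weights $\omega_i$ on join-irreducibles. I expect that the nonnegativity of the exponents and the monomial factorization below are all that is used, so negative weights $\omega_i(p)$ should be harmless; I would double-check that point.

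\textbf{Step 2: absorbing the weights.} Set
\[
\tilde\alpha(x)=\alpha(x)\mathbf q^{\mathbf r(x)},\quad
\tilde\beta(x)=\beta(x)\mathbf q^{\mathbf r(x)},\quad
\tilde\rho(x)=\rho(x)\mathbf q^{\mathbf r(x)},\quad
\tilde\delta(x)=\delta(x)\mathbf q^{\mathbf r(x)}.
\]
By modularity \eqref{req-12}, $\mathbf q^{\mathbf r(x)}\mathbf q^{\mathbf r(y)}=\mathbf q^{\mathbf r(x\vee y)}\mathbf q^{\mathbf r(x\wedge y)}$. Hence $\tilde\alpha(x)\tilde\beta(y)\le^{\mathbf q}\tilde\rho(x\vee y)\tilde\delta(x\wedge y)$, and the claim becomes
\[
\sum_x\tilde\alpha(x)\cdot\sum_y\tilde\beta(y)\ \le^{\mathbf q}\ \sum_z\tilde\rho(z)\cdot\sum_w\tilde\delta(w).
\]
This is the four functions inequality on $2^{[n]}$ for functions valued in $\mathbb R_{\ge0}[\mathbf q]$.

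\textbf{Step 3: induction on $n$.} Write $x=(x',\epsilon)$ with $\epsilon\in\{0,1\}$, and marginalize: $\hat\alpha(x')=\tilde\alpha(x',0)+\tilde\alpha(x',1)$, and similarly for $\hat\beta,\hat\rho,\hat\delta$. Fix $x',y'$ and put
\[
a_\epsilon=\tilde\alpha(x',\epsilon),\quad b_\epsilon=\tilde\beta(y',\epsilon),\quad c_\epsilon=\tilde\rho(x'\vee y',\epsilon),\quad d_\epsilon=\tilde\delta(x'\wedge y',\epsilon).
\]
The hypothesis then gives four inequalities:
\[
a_0b_0\le^{\mathbf q} c_0d_0,\quad a_1b_1\le^{\mathbf q} c_1d_1,\quad a_0b_1\le^{\mathbf q} c_1d_0,\quad a_1b_0\le^{\mathbf q} c_1d_0.
\]
The inductive step requires the one-point lemma $(a_0+a_1)(b_0+b_1)\le^{\mathbf q}(c_0+c_1)(d_0+d_1)$. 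Granting it, the marginalized functions satisfy the hypothesis on $2^{[n-1]}$, and induction finishes the proof.

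\textbf{Main obstacle: the one-point lemma over $\mathbb R_{\ge0}[\mathbf q]$.} Over the reals it is classical. If $c_1d_0=0$ it is immediate; otherwise one uses $c_0d_1\ge a_0b_0a_1b_1/(c_1d_0)$ and the inequality $(u-s)(v-s)\ge0$-type trick. Coefficientwise comparison of polynomials allows no division, so this argument does not transfer.

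My first attempt would keep more structure. After Step 2 and the induction, every value is a sum of nonnegative multiples of monomials whose exponents are controlled by $\mathbf r$. I would aim to prove the graded statement one monomial $\mathbf q^{\mathbf m}$ at a time. The key observation is that the modular constraint $\mathbf r(x)+\mathbf r(y)=\mathbf m$ forces $\mathbf r(x\vee y)+\mathbf r(x\wedge y)=\mathbf m$. For the single weight split by the last coordinate, the exponent shift from $\epsilon=0$ to $\epsilon=1$ is one fixed vector $\boldsymbol\omega$. So $c_1d_0$ and $c_0d_1$ both sit at shift $\boldsymbol\omega$, and the cross term is a genuine real-number identity between coefficient sequences. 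I would then prove the lemma for coefficient sequences by the real argument applied to suitable generating-function evaluations, or by an explicit injection that matches terms.

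If this fails, the fallback is to keep the induction on $\mathbb R_{\ge 0}$-valued functions on the enlarged lattice $L\times\prod_i \mathbb Z$. There one encodes $\mathbf r$ as an extra lattice coordinate, so that extracting a coefficient corresponds to a modular, hence lattice-compatible, projection. One then applies the real Ahlswede--Daykin inequality directly, level by level.
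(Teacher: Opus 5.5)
The paper gives no proof of this statement; it is quoted from \cite[Theorem 6.1]{CP23}. So your argument has to stand on its own, and it has a genuine gap exactly where you flag it.

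Steps 1 and 2 are fine. Restricting to $X=Y=L$, embedding $L$ in $2^{[n]}$ with the valuation extension of $\mathbf r$ (Laurent monomials do no harm), and absorbing $\mathbf q^{\mathbf r(x)}$ by modularity are all correct.

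The one-point lemma that Step 3 needs, however, is false over $\mathbb R_{\ge0}[\mathbf q]$. Take $N=1$, $a_0=b_0=c_0=d_0=1$, $a_1=b_1=q$, $c_1=1+q$, $d_1=q^2$. All four hypotheses hold: $1\le 1$, $q^2\le q^2+q^3$, and $q\le 1+q$ twice. Yet $(c_0+c_1)(d_0+d_1)-(a_0+a_1)(b_0+b_1)=1-q+q^2+q^3$.

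The extra structure you invoke does not rescue the lemma. Suppose the split coordinate contributes its own factor $z$, so $a_1=za_1'$, and so on. The $z^1$-part of the lemma then asks for $a_0b_1'+a_1'b_0\le c_0d_1'+c_1'd_0$, and the same data violate it (rename $q$ to $y$, the weight of coordinates already marginalized). Your ``real-number'' observation holds only at the very first split, when every value is a real multiple of a single monomial. After that the values are genuine polynomials, and each coefficient of the cross inequality is a convolution. The example shows that no argument using only the four marginal inequalities can close the induction, whether by evaluation at real points or by a term-matching injection.

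The fallback fails too. First, $x\mapsto(x,\mathbf r(x))$ is not a lattice map into $L\times\mathbb Z^N$: for $r(S)=|S|$ on $2^{[2]}$ one gets $(\{1\},1)\vee(\{2\},1)=(\{1,2\},1)\neq(\{1,2\},2)$. Second, the classical inequality only bounds products of total masses $\alpha(X)\beta(Y)$. The $\mathbf q^{\mathbf m}$-coefficient of your left side is instead $\sum_{\mathbf r(x)+\mathbf r(y)=\mathbf m}\alpha(x)\beta(y)$, a sum over coupled pairs. If you apply the classical inequality to level sets $X_{\mathbf a},Y_{\mathbf m-\mathbf a}$ and sum over $\mathbf a$, the right-hand sides contain pairs $(u,v)$ with $\mathbf r(u)+\mathbf r(v)\neq\mathbf m$, counted repeatedly. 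So the result does not bound the $\mathbf q^{\mathbf m}$-coefficient of the right side.

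What is actually required becomes visible if you push your reductions one step further. On $2^{[n]}$ every modular function has the form $\mathbf r(S)=\mathbf r(\emptyset)+\sum_{i\in S}\omega_i$. Since substituting monomials preserves coefficientwise nonnegativity, it suffices to prove the inequality with $\mathbf q^{\mathbf r(S)}$ replaced by $\mathbf z^S=\prod_{i\in S}z_i$ for independent variables $z_i$. For $\mathbf m\in\{0,1,2\}^n$, the coefficient of $\mathbf z^{\mathbf m}$ only involves pairs with $S\cap T=K:=\{i:m_i=2\}$ and $S\cup T=U:=\{i:m_i\ge 1\}$. The theorem therefore reduces to the real inequality
\[
\sum_{K\subseteq S\subseteq U}\alpha(S)\,\beta\bigl(K\cup(U\setminus S)\bigr)\ \le\ \sum_{K\subseteq S\subseteq U}\rho(S)\,\delta\bigl(K\cup(U\setminus S)\bigr)
\]
over complementary pairs in every interval $[K,U]$. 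This is the same coefficient extraction the paper uses to pass from this statement to Theorem \ref{WRLS}. The complementary-pair inequality does not follow from the classical four functions theorem applied to total masses. It needs its own proof, with an inductive hypothesis stronger than the four marginal inequalities, and that argument is precisely what your proposal is missing.
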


We  now  give a proof of Theorem  \ref{WRLS}.

\noindent
{\it Proof of Theorem  \ref{WRLS}}
We first  encode a subset   $H\subseteq [n]$ by extra variables 
 $\mathbf z=(z_1,\ldots,z_n)$, that is, write
\[
\mathbf z^H=\prod_{i\in H}z_i.
\]
Consider the product lattice
\[
\widetilde L=L\times 2^{[n]},
\]
with join and meet given by
\[
(x,H)\vee(y,J)=(x\vee y,H\cup J),
\ \ \ 
(x,H)\wedge(y,J)=(x\wedge y,H\cap J).
\]
With the above operations, 
$\widetilde L$ is a finite  distributive lattice.

We extend the modular functions on $L$ to $\widetilde L$ by setting
\[
\widetilde r_i(x,H)=r_i(x),
\qquad 1\le i\le N.
\]
Moreover, we introduce Boolean modular functions: for $1\le j\le n$, 
\[
\varepsilon_j(x,H)=
\begin{cases}
1,& j\in H,\\
0,& j\notin H.
\end{cases}
\]
Indeed, one has 
\[
\varepsilon_j(x,H)+\varepsilon_j(y,J)
=
\varepsilon_j(x\vee y,H\cup J)
+
\varepsilon_j(x\wedge y,H\cap J),
\]
because for each fixed $j\in [n]$,
\[
\mathbf 1_{j\in H}+\mathbf 1_{j\in J}
=
\mathbf 1_{j\in H\cup J}+\mathbf 1_{j\in H\cap J}.
\]
Now we see that  $\widetilde r_1,\ldots,\widetilde r_N,\varepsilon_1,\ldots,\varepsilon_n$
are modular functions on $\widetilde L$.

Define four subsets of $\widetilde L$ by
\begin{align*}
\mathcal A&=\{(a,H):H\subseteq[n],\ a\in A_H\},  \\[5pt]
\mathcal B&=\{(b,J):J\subseteq[n],\ b\in B_J\},\\[5pt]
\mathcal C&=\{(c,K):K\subseteq[n],\ c\in C_K\},\\[5pt]
\mathcal D&=\{(d,M):M\subseteq[n],\ d\in D_M\}.
\end{align*}
We claim that
\[
\mathcal A\vee\mathcal B\subseteq \mathcal C,
\qquad
\mathcal A\wedge\mathcal B\subseteq \mathcal D.
\]
This can be seen as follows. If $(a,H)\in\mathcal A$ and $(b,J)\in\mathcal B$, then
$a\in A_H$ and $b\in B_J$. By the assumption in the theorem, we have 
\[
a\vee b\in C_{H\cup J},
\qquad
a\wedge b\in D_{H\cap J}.
\]
Therefore
\[
(a,H)\vee(b,J)=(a\vee b,H\cup J)\in\mathcal C,
\]
and
\[
(a,H)\wedge(b,J)=(a\wedge b,H\cap J)\in\mathcal D.
\]
This verifies the claim. 

Applying Theorem  \ref{MAD}, along with the above claim,  to   $\widetilde L$ with  
$\alpha=\beta=\rho=\delta=1$ 
and with the modular functions
$\widetilde r_1,\ldots,\widetilde r_N,
\varepsilon_1,\ldots,\varepsilon_n$, we deduce  that 
\begin{align*}
\left(
\sum_{(a,H)\in\mathcal A}
\mathbf q^{{\mathbf  r}(a)}\mathbf z^H
\right)
\left(
\sum_{(b,J)\in\mathcal B}
\mathbf q^{{\mathbf  r}(b)}\mathbf z^J
\right)
&\le^{\mathbf q,\mathbf z}
\left(
\sum_{(c,K)\in\mathcal A\vee\mathcal B}
\mathbf q^{{\mathbf  r}(c)}\mathbf z^K
\right)
\left(
\sum_{(d,M)\in\mathcal \mathcal A\wedge\mathcal B}
\mathbf q^{{\mathbf  r}(d)}\mathbf z^M
\right)\\[5pt]
&\le^{\mathbf q,\mathbf z}
\left(
\sum_{(c,K)\in\mathcal C}
\mathbf q^{{\mathbf  r}(c)}\mathbf z^K
\right)
\left(
\sum_{(d,M)\in\mathcal D}
\mathbf q^{{\mathbf  r}(d)}\mathbf z^M
\right).
\end{align*}
Equivalently,
\[
\left(
\sum_{H\subseteq[n]}A_H(\mathbf q)\mathbf z^H
\right)
\left(
\sum_{J\subseteq[n]}B_J(\mathbf q)\mathbf z^J
\right)
\le^{\mathbf q,\mathbf z}
\left(
\sum_{K\subseteq[n]}C_K(\mathbf q)\mathbf z^K
\right)
\left(
\sum_{M\subseteq[n]}D_M(\mathbf q)\mathbf z^M
\right).
\]
We now take the coefficients of
$\mathbf z^{[n]}=z_1z_2\cdots z_n$ on both sides.
 Since each $\mathbf z^H$ is squarefree, the coefficient of
$\mathbf z^{[n]}$ on the left-hand side  is obtained precisely from pairs
$(H,J)$ satisfying
$H\cap J=\emptyset, 
H\cup J=[n]$.
Equivalently, $J=\overline{H}.$
Hence
\[
\left[\mathbf z^{[n]}\right]
\left(
\sum_{H\subseteq[n]}A_H(\mathbf q)\mathbf z^H
\right)
\left(
\sum_{J\subseteq[n]}B_J(\mathbf q)\mathbf z^J
\right)
=
\sum_{H\subseteq[n]}A_H(\mathbf q)B_{\overline{H}}(\mathbf q).
\]
Similarly,
\[
\left[\mathbf z^{[n]}\right]
\left(
\sum_{K\subseteq[n]}C_K(\mathbf q)\mathbf z^K
\right)
\left(
\sum_{M\subseteq[n]}D_M(\mathbf q)\mathbf z^M
\right)
=
\sum_{K\subseteq[n]}C_K(\mathbf q)D_{\overline{K}}(\mathbf q).
\]
Note that  the inequality before taking coefficients is coefficientwise in
both $\mathbf q$ and $\mathbf z$. Hence  taking the coefficient of
$\mathbf z^{[n]}$ preserves coefficientwise nonnegativity in
$\mathbf q$. This implies that 
\[
\sum_{H\subseteq[n]}
A_H(\mathbf q)B_{\overline{H}}(\mathbf q)
\le^{\mathbf q}
\sum_{H\subseteq[n]}
C_H(\mathbf q)D_{\overline{H}}(\mathbf q),
\]
and so  the proof is complete. 
\qed

\section{Schur orchestra inequality}\label{vari-srche}

In this section, we recall the {Schur orchestra inequality} developed  by Chan, Chen, Pak and Soskin \cite{CCPS}. We then  present a variation of this inequality which will be needed in the proof of Theorem \ref{main-1}.

 Let
\[
\mathcal E=\{1,\ldots,\ell\}\times\{0\},
\qquad
\mathcal F=\{1,\ldots,\ell\}\times\{1\}
\]
be two disjoint copies of $[\ell]$. Denote by $\mathcal E\sqcup\mathcal F$
 their disjoint union.
Let
\[
O: 2^{\mathcal E\sqcup\mathcal F}\longrightarrow \mathbb R
\]
be a real-valued function on the Boolean lattice of subsets of
$\mathcal E\sqcup\mathcal F$.
We say  that \(O\) satisfies the orchestra inequalities if for every set partition $\mathcal E\sqcup\mathcal F=B_1\sqcup\cdots\sqcup B_t$, 
\[
\sum_{H\subseteq[t]}
O\!\left(\bigsqcup_{h\in H}B_h\right)\ge0.
\]

Suppose that 
\[
\alpha,\beta,\gamma,\delta\in\mathbb Z_{\geq 0}^\ell.
\]
For a subset \(E\subseteq\mathcal E\), define
$\lambda(E)=(\lambda_1(E),\ldots,\lambda_\ell(E))$
by
\begin{equation}\label{eq-algm-1}
\lambda_i(E)=
\begin{cases}
\alpha_i+\gamma_i, & i\in E,\\
\alpha_i, & i\notin E,
\end{cases}
\end{equation}
where we simply use $i\in E$ to represent  $(i,0)\in E$. 
Similarly, for a subset \(F\subseteq\mathcal F\), define
 $\mu(F)=(\mu_1(F),\ldots,\mu_\ell(F))$
 by
\[
\mu_j(F)=
\begin{cases}
\beta_j+\delta_j, & j\in F,\\
\beta_j, & j\notin F.
\end{cases}
\]

For \(E\subseteq\mathcal E\) and \(F\subseteq\mathcal F\), write
$\overline E=\mathcal E\setminus E$ and  
$\overline F=\mathcal F\setminus F$.  We also adopt  the following convention: if \(\lambda\) or \(\mu\) is not a partition,
or if \(\mu\not\subseteq\lambda\), then
 $s_{\lambda/\mu}=0$.

\begin{theorem}[\text{Schur orchestra inequality \cite[Theorem 5.2]{CCPS}}]\label{cik-oo}
Let
\[
O:2^{\mathcal E\sqcup\mathcal F}\longrightarrow\mathbb R
\]
be a function satisfying the  orchestra inequalities. Then
\[
\sum_{\substack{E\subseteq\mathcal E\\ F\subseteq\mathcal F}}
O(E\sqcup F)\,
s_{\lambda(E)/\mu(F)}(\mathbf x)\,
s_{\lambda(\overline E)/\mu(\overline F)}(\mathbf x)
\geq_s0.
\]
\end{theorem}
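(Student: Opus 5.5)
The plan is to follow the Lam--Postnikov--Pylyavskyy method of Temperley--Lieb immanants. First I would rewrite each product $s_{\lambda(E)/\mu(F)}\,s_{\lambda(\overline E)/\mu(\overline F)}$ as a sum of Temperley--Lieb immanants of one fixed $2\ell\times 2\ell$ matrix, independent of $(E,F)$. The sum in the statement then collects as a nonnegative combination of Schur positive immanants, and the coefficients are exactly the orchestra sums.

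\textbf{Step 1: the common matrix.} For each $i\in[\ell]$ there are two "source" values, $\alpha_i-i$ and $\alpha_i+\gamma_i-i$. Exactly one of them goes to $\lambda(E)$ and the other to $\lambda(\overline E)$, and which one is decided by whether $(i,0)\in E$. Similarly, for each $j$ there are two "sink" values, $\beta_j-j$ and $\beta_j+\delta_j-j$, distributed according to $F$.

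By Jacobi--Trudi (equivalently, Lindstr\"om--Gessel--Viennot on lattice paths), $s_{\lambda(E)/\mu(F)}s_{\lambda(\overline E)/\mu(\overline F)}$ is a product of two $\ell\times\ell$ minors of
\[
M=\bigl(h_{a-b}\bigr),
\]
where $a$ runs over the $2\ell$ sources and $b$ over the $2\ell$ sinks. The two minors are selected by the 2-colouring of sources and sinks determined by $(E,F)$.

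Next I would invoke the LPP/Rhoades--Skandera expansion. It says that a product of two complementary minors equals $\sum_\tau \mathrm{Imm}_\tau(M)$, where $\tau$ ranges over the Temperley--Lieb (noncrossing matching) diagrams compatible with the colouring. Each $\mathrm{Imm}_\tau(M)$ is Schur positive, being a Temperley--Lieb immanant of a Jacobi--Trudi matrix.

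\textbf{Step 2: exchange the sums.} The left-hand side of the theorem becomes
\[
\sum_\tau \mathrm{Imm}_\tau(M)\sum_{(E,F)\ \text{compatible with}\ \tau} O(E\sqcup F).
\]
For fixed $\tau$, I would show that the compatible colourings form a "flip class" indexed by the connected components of a graph on $\mathcal E\sqcup\mathcal F$. In this graph, $(i,0)$ stands for the pair of sources $\{\alpha_i-i,\ \alpha_i+\gamma_i-i\}$, and $(j,1)$ stands for the analogous pair of sinks. The edges are induced by the arcs of $\tau$ together with the identification of each pair.

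The point is that compatibility forces colours along each arc to agree or disagree in a prescribed way. Toggling the membership of an element in $E$ or $F$ swaps the colours of its pair. Hence the compatible $(E,F)$ are exactly $E\sqcup F=\bigsqcup_{h\in H}B_h$ for $H\subseteq[t]$, where $B_1\sqcup\cdots\sqcup B_t$ is the block partition determined by $\tau$. If the base colouring does not correspond to the empty set, I would normalise by complementation, using the symmetry $(E,F)\leftrightarrow(\overline E,\overline F)$ of the product. The inner sum is then $\sum_{H\subseteq[t]}O(\bigsqcup_{h\in H}B_h)\ge 0$ by the orchestra inequalities, which finishes the proof.

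\textbf{Main obstacle.} I expect the hard step to be making Step 2 precise: checking that, for every $\tau$, the set of compatible $(E,F)$ is exactly a Boolean family of unions of blocks of a set partition. This requires tracking the planar order of the $4\ell$ boundary points, since the relative order of $\alpha_i-i$ and $\alpha_i+\gamma_i-i$ among the other sources matters. A second difficulty is the degenerate cases, where $\lambda(E)$ or $\mu(F)$ is not a partition or $\mu\not\subseteq\lambda$. In these cases the Jacobi--Trudi determinant need not literally vanish, so the convention $s_{\lambda/\mu}=0$ must be reconciled with the immanant expansion. I would first try to arrange the sources and sinks in a fixed total order in which nonpartition choices produce coinciding or out-of-order endpoints. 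LGV would then give zero directly, and I would verify that the corresponding immanant terms cancel or are excluded.
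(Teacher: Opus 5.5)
Your architecture is the natural LPP-type route: one sorted $2\ell\times 2\ell$ matrix $M=(h_{a-b})$, the Temperley--Lieb expansion of products of complementary minors, and then reading each coefficient off as a sum of $O$ over compatible colourings. (The paper gives no proof and simply quotes CCPS.) However, both points you defer carry the real content, and the fixes you sketch do not work.

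\textbf{First gap: the flip argument does not give unions of blocks.} The flip argument in Step~2 is purely formal. Each boundary point carries one $\tau$-arc and one pair-edge, so the components are cycles, and what follows is only that the compatible set is $\{X_0\,\triangle\,\bigsqcup_{h\in H}B_h : H\subseteq[t]\}$ for any one compatible $X_0=E\sqcup F$. This is a family of unions of blocks iff $X_0$ itself is a union of blocks, i.e.\ iff the $(\emptyset,\emptyset)$ colouring is compatible with $\tau$. Global complementation flips all blocks at once. It therefore cannot repair a cycle on which some ``high'' endpoints ($\alpha_i+\gamma_i-i$ or $\beta_j+\delta_j-j$) lie in the first minor and others in the second, and the orchestra inequalities say nothing about the resulting sums.

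This is already visible for $\ell=2$, $\beta=\delta=0$ and $O$ depending only on $E$. Up to the factor $2^{\ell}$ from $F$, the left-hand side is $(O(\emptyset)+O(\{1,2\}))\,s_\alpha s_{\alpha+\gamma}+(O(\{1\})+O(\{2\}))\,s_{(\alpha_1+\gamma_1,\alpha_2)}s_{(\alpha_1,\alpha_2+\gamma_2)}$. The orchestra inequalities allow $O(\{1\})+O(\{2\})<0$, so what is needed is exactly the LPP inequality.

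So you must prove the following. Every $\tau$ compatible with a colouring for which $\lambda(E),\lambda(\overline E),\mu(F),\mu(\overline F)$ are partitions is also compatible with the $(\emptyset,\emptyset)$ colouring, namely that of $\alpha/\beta$ and $(\alpha+\gamma)/(\beta+\delta)$. Since $\lambda(E)\vee\lambda(\overline E)=\alpha+\gamma$ and $\lambda(E)\wedge\lambda(\overline E)=\alpha$ (and likewise for $\mu$), this is the noncrossing compatibility lemma behind LPP's proof of $s_{\lambda/\mu}s_{\nu/\rho}\le_s s_{\lambda\vee\nu/\mu\vee\rho}\,s_{\lambda\wedge\nu/\mu\wedge\rho}$. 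It genuinely uses the planarity of $\tau$ and the sorted positions of the $4\ell$ points, and it needs the shapes to be partitions.

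\textbf{Second gap: degenerate terms do not vanish.} Out-of-order endpoints do not make the LGV determinant vanish. Take $\alpha=(1,1)$, $\gamma=(0,2)$, $E=\{2\}$, so $\lambda(E)=(1,3)$. The sorted-matrix minor is $s_{(2,2)}$ and the Jacobi--Trudi determinant in natural order is $-s_{(2,2)}$, whereas the convention requires $0$. An unsorted row order is not an option anyway, because Schur positivity of $\mathrm{Imm}_\tau$ needs the weakly decreasing arrangement.

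A fix that works goes as follows.
\begin{enumerate}
\item For each $i$, whether $\lambda(E)$ and $\lambda(\overline E)$ both satisfy the $i$-th partition inequality depends only on whether $i,i+1$ lie on the same side of $E$.
\item Because $\gamma\ge 0$, the ``opposite sides'' condition implies the ``same side'' one. Hence the good $E$ are exactly the unions of blocks of an interval partition of $\mathcal E$ (or there are none), and likewise for $F$ using $\delta\ge 0$. Call the resulting partition of $\mathcal E\sqcup\mathcal F$ by $\pi_0$.
\item Set $O'=O\cdot\mathbf 1[(E,F)\text{ good}]$. It still satisfies the orchestra inequalities, since its orchestra sum for a partition $\pi$ equals that of $O$ for $\pi\vee\pi_0$. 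It leaves the left-hand side unchanged, and it makes every surviving summand an honest product of complementary minors of the sorted matrix (the case $\mu(F)\not\subseteq\lambda(E)$ vanishes by Jacobi--Trudi).
\item Every $\tau$ then either has no good compatible colouring, giving coefficient $0$, or, by the lemma above, has coefficient $\sum_{H}O'(\bigsqcup_{h\in H}B_h)\ge 0$.
\end{enumerate}
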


What we require is a variation  of the Schur orchestra inequality.

\begin{theorem}
\label{cor:straight-complementary-orchestra}
Let \(I=[\ell]\), and let
$O:2^I\longrightarrow \mathbb R$
be a function satisfying the orchestra inequalities: for every set partition
$I=B_1\sqcup\cdots\sqcup B_t$,
\[
\sum_{H\subseteq[t]}
O\left(\bigcup_{h\in H}B_h\right)\ge 0.
\]
Let \(\alpha,\gamma\in\mathbb Z_{\ge0}^{\ell}\). For \(E\subseteq I\), define $\lambda(E)$ as in \eqref{eq-algm-1}. Then
\[
\sum_{E\subseteq I}
O(E)\,
s_{\lambda(E)}(\mathbf x)\,
s_{\lambda(I\setminus E)}(\mathbf x)
\ge_s 0.
\]
\end{theorem}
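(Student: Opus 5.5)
The plan is to obtain the statement as a specialization of the Schur orchestra inequality, Theorem \ref{cik-oo}. I make the skew parts trivial by taking $\beta=\delta=0\in\mathbb Z_{\ge 0}^\ell$, and I lift $O$ to a function on $2^{\mathcal E\sqcup\mathcal F}$ that ignores the $\mathcal F$-coordinate. With $\beta=\delta=0$ we get $\mu(F)=\emptyset$ for every $F\subseteq\mathcal F$. Hence $s_{\lambda(E)/\mu(F)}=s_{\lambda(E)}$ and $s_{\lambda(\overline E)/\mu(\overline F)}=s_{\lambda(\overline E)}$, under the convention that these vanish when $\lambda(E)$ is not a partition. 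This matches the convention implicit in the statement. Identifying $\mathcal E$ with $I=[\ell]$ via $(i,0)\mapsto i$, we have $\overline E=I\setminus E$.

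Define $O':2^{\mathcal E\sqcup\mathcal F}\to\mathbb R$ by
\[
O'(E\sqcup F)=2^{-\ell}\,O(E),\qquad E\subseteq\mathcal E,\ F\subseteq\mathcal F.
\]
Then
\[
\sum_{E\subseteq\mathcal E,\,F\subseteq\mathcal F}O'(E\sqcup F)\,s_{\lambda(E)/\mu(F)}\,s_{\lambda(\overline E)/\mu(\overline F)}=\sum_{E\subseteq I}O(E)\,s_{\lambda(E)}\,s_{\lambda(I\setminus E)},
\]
because the sum over the $2^\ell$ choices of $F$ cancels the factor $2^{-\ell}$. So Theorem \ref{cik-oo} gives the claim once $O'$ is shown to satisfy the orchestra inequalities.

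That verification is the only real step. Take a set partition $\mathcal E\sqcup\mathcal F=B_1\sqcup\cdots\sqcup B_t$. Let $P$ be the set of indices $h$ with $B_h\cap\mathcal E\neq\emptyset$, and $Q=[t]\setminus P$ the set of indices whose blocks lie entirely in $\mathcal F$. The nonempty sets $B_h\cap\mathcal E$, for $h\in P$, form a set partition of $\mathcal E\cong I$. For $H\subseteq[t]$, the $\mathcal E$-part of $\bigsqcup_{h\in H}B_h$ is $\bigcup_{h\in H\cap P}(B_h\cap\mathcal E)$, and it does not depend on $H\cap Q$. Therefore
\[
\sum_{H\subseteq[t]}O'\Big(\bigsqcup_{h\in H}B_h\Big)=2^{|Q|-\ell}\sum_{H'\subseteq P}O\Big(\bigcup_{h\in H'}(B_h\cap\mathcal E)\Big)\ge 0,
\]
where the inequality is the orchestra inequality for $O$ applied to the induced partition of $I$.

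I expect no serious obstacle. The points to check are that the orchestra hypothesis is used exactly with partitions of $I$, which holds because every element of $\mathcal E$ lies in some block $B_h$ with $h\in P$, and that the zero conventions for non-partitions agree between the two statements.
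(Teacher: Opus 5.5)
Your proposal is correct and matches the paper's proof essentially step for step. The paper uses the same lift $\widetilde O(A\sqcup B)=2^{-\#I}O(\pi(A))$ and checks the orchestra inequalities through the induced partition of $I$, with the factor $2^{t-m}$ coming from blocks contained in $\mathcal F$. It then specializes Theorem~\ref{cik-oo} to $\beta=\delta=\mathbf 0$, and the sum over $B\subseteq\mathcal F$ cancels the normalizing factor $2^{-\#I}$.
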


\begin{myproof}
Let
\[
\mathcal E=I\times\{0\},
\qquad
\mathcal F=I\times\{1\}
\]
be two disjoint copies of \(I\). For \(A\subseteq\mathcal E\), define the projection 
\[
\pi(A)=\{i\in I:(i,0)\in A\}.
\]
Construct  a function
\[
\widetilde O:2^{\mathcal E\sqcup\mathcal F}\longrightarrow\mathbb R
\]
by
\[
\widetilde O(A\sqcup B)
=
2^{-\# I}\,O(\pi(A)),
\qquad
A\subseteq\mathcal E,\ B\subseteq\mathcal F.
\]
Thus \(\widetilde O\) depends only on the part   lying in the first copy
\(\mathcal E\).

We first prove that \(\widetilde O\) satisfies the orchestra inequalities on
\(\mathcal E\sqcup\mathcal F\). Let
\[
\mathcal E\sqcup\mathcal F
=
C_1\sqcup\cdots\sqcup C_t
\]
be an arbitrary set partition. For each \(j\in[t]\), define
\[
D_j=\pi(C_j\cap\mathcal E)\subseteq I.
\]
The nonempty sets among \(D_1,\ldots,D_t\) are pairwise disjoint and have union
\(I\). Let
\[
M=\{j\in[t]:D_j\neq\emptyset\},
\qquad m=\# M.
\]
Then
\[
I=\bigsqcup_{j\in M}D_j.
\]
For every \(H\subseteq[t]\), we have
\[
\pi\left(
\left(\bigsqcup_{h\in H}C_h\right)\cap\mathcal E
\right)
=
\bigcup_{h\in H}D_h
=
\bigcup_{h\in H\cap M}D_h.
\]
Hence
\begin{align}\label{akl-8-kbn}
\sum_{H\subseteq[t]}
\widetilde O\left(\bigsqcup_{h\in H}C_h\right)
&=
2^{-\# I}
\sum_{H\subseteq[t]}
O\left(\bigcup_{h\in H}D_h\right)\nonumber\\[5pt]
&=
2^{-\# I}\,2^{t-m}
\sum_{H'\subseteq M}
O\left(\bigcup_{h\in H'}D_h\right).
\end{align}
Indeed, after fixing \(H'=H\cap M\), the remaining indices in
\([t]\setminus M\) can be chosen arbitrarily, and they do not change the union
\(\bigcup_{h\in H}D_h\), since \(D_j=\emptyset\) for \(j\notin M\).

Since the nonempty \(D_j\)'s form a set partition of \(I\), the sum in \eqref{akl-8-kbn} is
nonnegative by the orchestra inequalities satisfied by  \(O\). Therefore \(\widetilde O\)
satisfies the orchestra inequalities on \(\mathcal E\sqcup\mathcal F\).

We now apply Theorem \ref{cik-oo} to \(\widetilde O\) where we set $\beta=\delta=\mathbf 0$.
For  \(A\subseteq\mathcal E\) and  \(B\subseteq\mathcal F\), we see that  
\[
s_{\lambda(A)/\mu(B)}(\mathbf x)
=
s_{\lambda(\pi(A))}(\mathbf x),
\]
and 
\[
s_{\lambda(\mathcal E\setminus A)/\mu(\mathcal F\setminus B)}(\mathbf x)
=
s_{\lambda(I\setminus\pi(A))}(\mathbf x).
\]
 Theorem \ref{cik-oo} therefore gives
\begin{align}\label{hak-0kj-0}
0
&\le_s
\sum_{A\subseteq\mathcal E}
\sum_{B\subseteq\mathcal F}
\widetilde O(A\sqcup B)\,
s_{\lambda(\pi(A))}(\mathbf x)\,
s_{\lambda(I\setminus\pi(A))}(\mathbf x)\nonumber\\[5pt]
&=
\sum_{A\subseteq\mathcal E}
\sum_{B\subseteq\mathcal F}
2^{-\# I}O(\pi(A))\,
s_{\lambda(\pi(A))}(\mathbf x)\,
s_{\lambda(I\setminus\pi(A))}(\mathbf x).
\end{align}
For a fixed \(A\subseteq\mathcal E\), the summand is independent of
\(B\subseteq\mathcal F\), and there are \(2^{\# I}\) choices of \(B\). Hence the
normalizing factor \(2^{-\# I}\) cancels the sum over \(B\). Since
\(A\mapsto \pi(A)\) is a bijection from subsets of \(\mathcal E\) to subsets of
\(I\),  \eqref{hak-0kj-0}  becomes
\[
\sum_{E\subseteq I}
O(E)\,
s_{\lambda(E)}(\mathbf x)\,
s_{\lambda(I\setminus E)}(\mathbf x),
\]
as desired. 
\end{myproof}

\section{Proof of Theorem \ref{main-1}}\label{sec:schur-positive}

We will finish the proof of Theorem \ref{main-1} in this section. The main ingredients include  Corollaries  \ref{ciri-j0} and  \ref{cln-sl}, Theorem \ref{WRLS}, as well as  Theorem \ref{cor:straight-complementary-orchestra}.

\begin{theorem}[=Theorem \ref{main-1}]\label{thm:hgp-lpp}
For    partitions $\lambda,\mu$, we have 
\[
H_\lambda(\mathbf x;\mathbf t,\mathbf w)\,
H_\mu(\mathbf x;\mathbf t,\mathbf w)
\leq _{s}^{\mathbf t,\mathbf w}
H_{\lambda\vee\mu}(\mathbf x;\mathbf t,\mathbf w)\,
H_{\lambda\wedge\mu}(\mathbf x;\mathbf t,\mathbf w).
\]
\end{theorem}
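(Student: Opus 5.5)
The plan is to reduce the statement to the framework of \cite{CCPS}, using the oscillating-sequence expansion from Corollary \ref{ciri-j0} and the lattice structure from Corollary \ref{cln-sl}. Write $\nu=\lambda\vee\mu$ and $\kappa=\lambda\wedge\mu$, and choose a common $l\ge\ell(\lambda)+\ell(\mu)$. First I will check that $\OS^{l}(\lambda\to\rho)$ has the same weighted generating function as $\OS^{\ell(\lambda)}(\lambda\to\rho)$ when $l\ge\ell(\lambda)$. Indeed, the extra initial rising/falling steps below row $\ell(\lambda)$ are forced: the corresponding SVRPP rows are empty, so the dilation–contraction algorithm applied with a larger $l$ gives the same bijection. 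This lets all four polynomials $H_\lambda,H_\mu,H_\nu,H_\kappa$ be expanded over oscillating sequences of one common length $l$. By Theorem \ref{thm:schur-expansion}, each $H_\rho=\sum_{S\in\OS^l(\rho)}\wt(S)\,s_{\mathrm{end}(S)}$. Hence
\[
H_\lambda H_\mu=\sum_{S\in\OS^l(\lambda),\,S'\in\OS^l(\mu)}\wt(S)\wt(S')\,s_{\mathrm{end}(S)}s_{\mathrm{end}(S')}.
\]
The analogous expansion holds for $H_\nu H_\kappa$. The statistics $a_i,b_j$ are differences of sizes of consecutive partitions, so they are modular functions on the distributive lattice $\OS^{l,m}_k$ for suitable $k,m$. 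This puts us exactly in the setting of Theorem \ref{WRLS} with $\mathbf q=(\mathbf t,\mathbf w)$.

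Next I will follow the CCPS strategy for proving LPP from RLS plus the orchestra inequality. Let $\rho=\mathrm{end}(S)$ and $\rho'=\mathrm{end}(S')$. Write $\alpha=\rho\wedge\rho'$ and $\gamma=|\rho-\rho'|$ coordinatewise, and let $E$ be the set of rows where $\rho$ is the larger. Then $\{\rho,\rho'\}=\{\lambda(E),\lambda(I\setminus E)\}$ in the notation of \eqref{eq-algm-1}. Fix $(\alpha,\gamma)$ with support $I=\{i:\gamma_i>0\}$, $|I|=n$. For $H\subseteq I$, define the following sets:
\begin{itemize}
\item $A_H$ is the set of $S\in\OS^l(\lambda)$ ending at $\lambda(H)$;
\item $B_H$ is the set of $S'\in\OS^l(\mu)$ ending at $\lambda(H)$;
\item $C_H$ is the set of sequences in $\OS^l(\nu)$ ending at $\lambda(H)$;
\item $D_H$ is the set of sequences in $\OS^l(\kappa)$ ending at $\lambda(H)$.
\end{itemize}
By Corollary \ref{cln-sl}, the join of $S$ and $S'$ starts at $\nu$ and ends at $\lambda(H)\vee\lambda(J)=\lambda(H\cup J)$. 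Likewise, the meet starts at $\kappa$ and ends at $\lambda(H\cap J)$. So $A_I\vee B_J\subseteq C_{I\cup J}$ and $A_I\wedge B_J\subseteq D_{I\cap J}$. The same applies with $A,B$ swapped, and all sets live in one finite lattice, a product of two copies of the $\OS$ lattice if needed. Applying Theorem \ref{WRLS} to every sub-block partition of $I$ (grouping coordinates into blocks $B_1,\dots,B_t$) gives
\[
\sum_{H}\bigl(C_H D_{\overline H}-A_HB_{\overline H}\bigr)(\mathbf t,\mathbf w)\ \ge^{\mathbf t,\mathbf w} 0 .
\]
This inequality holds for every coarsening into blocks, and it holds coefficientwise in each monomial $\mathbf t^\beta\mathbf w^\alpha$. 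Hence, for each fixed monomial, $O(H):=[\mathbf t^\beta\mathbf w^\alpha](C_HD_{\overline H}-A_HB_{\overline H})$ satisfies the orchestra inequalities on $2^I$.

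Finally, the coefficient of $\mathbf t^\beta\mathbf w^\alpha$ in $H_\nu H_\kappa-H_\lambda H_\mu$ splits into contributions indexed by $(\alpha,\gamma)$, each equal to $\sum_{E\subseteq I}O(E)\,s_{\lambda(E)}s_{\lambda(I\setminus E)}$. Theorem \ref{cor:straight-complementary-orchestra} then makes each of these Schur positive, and summing finishes the proof.

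I expect the main obstacle to be the orchestra step. Theorem \ref{WRLS} directly gives only the complementary-sum inequality for the full set $I$, but the orchestra inequalities require it for every block partition $B_1\sqcup\dots\sqcup B_t$. I would handle this by reindexing: replace $[n]$ by $[t]$ and define $A_{H}$ as $A_{\bigcup_{h\in H}B_h}$, and similarly for $B,C,D$. I would then verify that the join/meet compatibility survives this coarsening, which holds because unions and intersections of block-unions are block-unions. A secondary technical point is that we must work coefficientwise in the multigraded monomials. This is justified because Theorem \ref{WRLS} gives $\le^{\mathbf q}$, and modularity of $a_i,b_j$ guarantees the weight of $S\vee S'$ times that of $S\wedge S'$ equals $\wt(S)\wt(S')$.
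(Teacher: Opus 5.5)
Your proposal is correct and follows essentially the same route as the paper: expand all four $H$'s over $l$-oscillating sequences of a common length, group terms by the join/meet fiber of the endpoint pair, verify the orchestra inequalities coefficientwise by applying Theorem \ref{WRLS} to the block-coarsened families (e.g.\ $A_{\bigcup_{h\in H}B_h}$), and finish with Theorem \ref{cor:straight-complementary-orchestra}. The differences are only cosmetic. You index by the support of $\gamma$ rather than by all rows, so to match the literal hypotheses of Theorem \ref{cor:straight-complementary-orchestra} you need the same $2^{-\#}$ padding the paper uses. Also, no product lattice is needed, since the single lattice $\OS^{l,m}_k$ already contains all four families.
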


\begin{myproof}
Let $l=\max\{\ell(\l),\ell(\mu)\}$. 
For an $l$-oscillating sequence 
\[
S=(\lambda^0,\lambda^1,\ldots,\lambda^{2l-1}),
\]
define $\mathbf r(S)=(r_1(S),\ldots,r_{2l-1}(S))$ by setting 
\[
r_{2i-1}(S)=|\lambda^{2i-1}/\lambda^{2i-2}|\ \text{for $1\le i\le l$},\ \ 
r_{2j}(S)=|\lambda^{2j-1}/\lambda^{2j}| \ \text{ for $1\le j\le l-1$}.
\]
For two $l$-oscillating sequences $S$  and $S'$, it is readily checked that for $1\leq i\leq 2l-1$,
\begin{equation}\label{abkb-8-i}
  r_i(S)+ r_i(S')=  r_i(S\vee S')+  r_i(S\wedge S').   
\end{equation}

Write   $\mathbf q=(q_1,\ldots,q_{2l-1})$. Then 
\[
\mathbf q^{\mathbf r(S)}
=\prod_{1\le i\le l}q_{2i-1}^{r_{2i-1}(S)}\prod_{1\le j\le l-1}q_{2j}^{r_{2j}(S)}.
\]
For partitions $\lambda,\alpha$, define
\[
K_{\lambda,\alpha}(\mathbf q)
=
\sum_{S\in \OS^l(\lambda\to\alpha)}
\mathbf q^{\mathbf r(S)}.
\]
For $\mathbf a\in\mathbb Z_{\geq 0}^{2l-1}$, set 
\[
F_\alpha^{\mathbf a}(\beta)
=
\#\{S\in \OS^l(\alpha\to\beta): \mathbf r(S)=\mathbf a\}.
\]
Thus
\[
K_{\alpha,\beta}(\mathbf q)
=
\sum_{\mathbf a}
F_\alpha^{\mathbf a}(\beta)\mathbf q^{\mathbf a}.
\]

Let 
\[
{H}_{\lambda}(\mathbf x;\mathbf q)
=
\sum_\alpha K_{\lambda,\alpha}(\mathbf q)s_{\alpha}(\mathbf x),
\]
which, together with Corollary \ref{ciri-j0}, specializes  to  ${H}_{\lambda}(\mathbf x;\mathbf t, \mathbf w)$ after the substitutions
\begin{equation}\label{agle-789}
q_{2i-1}=w_{l-i+1}\ (1\le i\leq l),
\qquad
q_{2j}=t_{l-j}\ (1\le j\le l-1).
\end{equation}

\noindent
{\it Remark.} Strictly speaking, in  Corollary \ref{ciri-j0}, we require  $l=\ell(\lambda)$. In the case when $l>\ell(\lambda)$, we notice that  the first $2(l-\ell(\lambda))$ shapes in any oscillating sequence in $\OS^l(\lambda\to\alpha)$ are equal, and so there is a  bijection from  $\OS^l(\lambda\to\alpha)$ to  $\OS^{\ell(\lambda)}(\lambda\to\alpha)$ by ignoring the initial  $2(l-\ell(\lambda))$ shapes. In particular, after the substitutions in \eqref{agle-789}, ${H}_{\lambda}(\mathbf x;\mathbf q)$ still specializes to ${H}_{\lambda}(\mathbf x;\mathbf t, \mathbf w)$.

For $\zeta \in\mathbb Z_{\geq 0}^{2l-1}$, denote 
\[
L_\zeta(\nu,\gamma)
=
\sum_{\mathbf a+\mathbf b=\zeta}
F_\lambda^{\mathbf a}(\nu)
F_\mu^{\mathbf b}(\gamma),
\]
and
\[
R_\zeta(\nu,\gamma)
=
\sum_{\mathbf a+\mathbf b=\zeta}
F_{\lambda\vee\mu}^{\mathbf a}(\nu)
F_{\lambda\wedge\mu}^{\mathbf b}(\gamma).
\]
Then the coefficient of $\mathbf q^\zeta$ in
\[
H_{\lambda\vee\mu}(\mathbf x;\mathbf q) H_{\lambda\wedge\mu}(\mathbf x;\mathbf q)
-
H_\lambda(\mathbf x;\mathbf q) H_\mu(\mathbf x;\mathbf q)
\]
is
\begin{equation}\label{xx-k0}
\Delta_\zeta
=
\sum_{\nu,\gamma}
\bigl(R_\zeta(\nu,\gamma)-L_\zeta(\nu,\gamma)\bigr)
s_\nu s_\gamma.
\end{equation}
Therefore, it is  enough to prove that for every fixed $\zeta$, $\Delta_\zeta$ is Schur positive.  The proof will be divided into three steps.

Step 1.  
We first explain  that the  sum in \eqref{xx-k0} is finite for each fixed
  $\zeta\in \mathbb Z_{\geq 0}^{2l-1}$. It follows from definition that if 
$F^{\mathbf a}_\lambda(\nu)\neq 0$, then there exists
\[
S=(\lambda^0,\lambda^1,\ldots,\lambda^{2l-1})
\in \OS^l(\lambda\to\nu)
\]
with $\mathbf r(S)=\mathbf a$. Recall that 
\[
a_{2i-1}=|\lambda^{2i-1}/\lambda^{2i-2}|\ (1\le i\leq l),
\qquad
a_{2j}=|\lambda^{2j-1}/\lambda^{2j}|\ (1\le j\leq l-1).
\]
Hence
\[
|\nu|
=
|\lambda|
+
\sum_{i=1}^l a_{2i-1}
-
\sum_{j=1}^{l-1} a_{2j}.
\]
Thus, for fixed $\lambda$ and $\mathbf a$, the endpoint shape $\nu$ must have
a fixed size, and so there  are  finitely many such partitions $\nu$.

For fixed $\zeta$, there are only finitely many decompositions
$\mathbf a+\mathbf b=\zeta$. It follows that
$L_\zeta(\nu,\gamma)$ and $R_\zeta(\nu,\gamma)$ are nonzero for only
finitely many pairs $(\nu,\gamma)$. 

The following two steps will  borrow  some ideas in the proofs of \cite[Theorem 1.7]{CCPS} and \cite[Theorem 8.1]{CCPS}.

Step 2.   We now group the summands of $\Delta_\zeta$ in \eqref{xx-k0} according to their join and
meet. Fix a pair of partitions $\sigma\subseteq\rho$, and consider the
fiber
\[
\mathcal F(\rho,\sigma)
=
\{(\nu,\gamma):\nu\vee\gamma=\rho,\ \nu\wedge\gamma=\sigma\}.
\]
Note that for two pairs of partitions $\sigma^1\subseteq\rho^1$ and $\sigma^2\subseteq\rho^2$, if $(\sigma^1,\rho^1)\neq (\sigma^2,\rho^2)$, then $$\mathcal F(\rho^1,\sigma^1)\cap\mathcal F(\rho^2,\sigma^2)=\emptyset.$$
Hence \eqref{xx-k0} can be split into disjoint  summands  according to $\mathcal F(\rho,\sigma)$. 

Let $l'=\max\{\ell(\rho), \ell(\sigma)\}$, and set
\begin{equation}\label{an-2-zbnd}
I=\{1,2,\ldots, l'\}.
\end{equation}
For $E\subseteq I$, define two vectors $\nu_E$ and $\gamma_E=\nu_{I\setminus E}$ in $\mathbb Z_{\geq 0}^{l'}$ by 
\[
\nu_E(i)=
\begin{cases}
\rho_i,& i\in E,\\
\sigma_i,& i\notin E,
\end{cases}
\qquad
\gamma_E(i)=
\begin{cases}
\sigma_i,& i\in E,\\
\rho_i,& i\notin E.
\end{cases}
\]
Observe that  every pair $(\nu,\gamma)$ satisfying $\nu\vee\gamma=\rho$ and $\nu\wedge\gamma=\sigma$ 
is obtained in exactly  $2^{\#\{i:\rho_i=\sigma_i\}}$ ways by taking $E$ satisfying that 
\[
\{i\in I:\nu_i=\rho_i, \rho_i\ne \sigma_i\}\subseteq E\subseteq\{i\in I:\nu_i=\rho_i\}.
\] Hence the contribution of $\mathcal F(\rho,\sigma)$ is
\begin{align}
\Delta_{\zeta;\rho,\sigma}
&=\sum_{(\nu,\gamma)\in \mathcal F(\rho,\sigma)}\bigl(R_\zeta(\nu,\gamma)-L_\zeta(\nu,\gamma)\bigr)
s_\nu s_\gamma
=\sum_{E\subseteq I}
O_\zeta(E)\,
s_{\nu_E}s_{\gamma_E}\nonumber\\[5pt]
&=\sum_{E\subseteq I}
O_\zeta(E)\,
s_{\nu_E}s_{\nu_{I\setminus E}},\label{k-0-123}
\end{align}
where
\[
O_\zeta(E)
=
2^{-\#\{i:\rho_i=\sigma_i\}}(R_\zeta(\nu_E,\gamma_E)-L_\zeta(\nu_E,\gamma_E)).
\]
Here, if   $\nu_E$ or $\gamma_E$ is not a partition,  the
corresponding   terms are understood as zero. 
Now, to confirm the Schur positivity of $\Delta_\zeta$, it is enough to show that each $\Delta_{\zeta;\rho,\sigma}$ is
Schur positive.

Step 3.   We apply    Theorem  \ref{cor:straight-complementary-orchestra} to prove the Schur positivity of $\Delta_{\zeta;\rho,\sigma}$ in \eqref{k-0-123}. 
It suffices to  verify  that $O_\zeta$ satisfies the  orchestra
inequalities required in Theorem  \ref{cor:straight-complementary-orchestra}.
Fix any  set partition $B_1\sqcup\cdots\sqcup B_t$ of the set $I$ in  \eqref{an-2-zbnd}.
For $H\subseteq[t]$, let
\[
E_H=\bigcup_{h\in H}B_h,
\qquad
\nu_H=\nu_{E_H},
\qquad
\gamma_H=\gamma_{E_H}=\nu_{E_{\overline{H}}}.
\]
The  orchestra
inequality we  need to prove is 
\[
\sum_{H\subseteq[t]}
O_\zeta(E_H)\ge0,
\]
which is equivalent to (both sides multiplied   by the   factor $2^{\#\{i:\rho_i=\sigma_i\}}$)
\[
\sum_{H\subseteq[t]}
[\mathbf q^\zeta]\,
K_{\lambda\vee\mu,\nu_H}(\mathbf q)
K_{\lambda\wedge\mu,\gamma_H}(\mathbf q)
\ge
\sum_{H\subseteq[t]}
[\mathbf q^\zeta]\,
K_{\lambda,\nu_H}(\mathbf q)
K_{\mu,\gamma_H}(\mathbf q).
\]
We will prove a stronger version
\begin{equation}\label{ghk-nk}
\sum_{H\subseteq[t]}
K_{\lambda,\nu_H}(\mathbf q)
K_{\mu,\gamma_H}(\mathbf q)
\le^{\mathbf q}
\sum_{H\subseteq[t]}
K_{\lambda\vee\mu,\nu_H}(\mathbf q)
K_{\lambda\wedge\mu,\gamma_H}(\mathbf q).
\end{equation}

We use   Theorem  \ref{WRLS} to give a proof of \eqref{ghk-nk}. 
By Step 1, we see that  there exists an integer $m$ such that every partition
$\nu$ or $\gamma$ appearing in a nonzero term of $\Delta_\zeta$
satisfies
$\ell(\nu),\ell(\gamma)\le m$. Set $k=\max\{\l_1,\mu_1\}$. 
Consider the finite distributive lattice 
 $\OS_k^{l,m}$, as defined in \eqref{aghkhn-afn-0}. 
For each $H\subseteq[t]$, define subsets of $\OS_k^{l,m}$ by
\begin{alignat*}{2}
  A_H &= \OS^l(\lambda\to\nu_H), & \qquad B_H &= \OS^l(\mu\to\nu_H), \\[5pt]
  C_H &= \OS^l(\lambda\vee\mu\to\nu_H), & \qquad D_H &= \OS^l(\lambda\wedge\mu\to\nu_H).
\end{alignat*}
Here when $\nu_H$ is not a valid partition, the above sets are understood as empty sets. 
Then
\[
\sum_{S\in A_H}\mathbf q^{\mathbf r(S)}
=
K_{\lambda,\nu_H}(\mathbf q),
\qquad
\sum_{T\in B_H}\mathbf q^{\mathbf r(T)}
=
K_{\mu,\nu_H}(\mathbf q),
\]
and similarly for $C_H,D_H$.

It remains to  verify the hypotheses  in    Theorem  \ref{WRLS}.
Take $S\in A_H$ and $S'\in B_J.$
The  initial shapes of $S\vee S'$ and $S\wedge S'$ are $\lambda\vee\mu$ and $\lambda\wedge\mu$,  and their final shapes are $\nu_H\vee\nu_J$ and $\nu_H\wedge\nu_J$. 
 By the construction of $\nu_H$, one has
\[
\nu_H\vee\nu_J=\nu_{H\cup J},
\qquad
\nu_H\wedge\nu_J=\nu_{H\cap J}.
\]
Therefore
\[
A_H\vee B_J\subseteq C_{H\cup J},
\qquad
A_H\wedge B_J\subseteq D_{H\cap J}.
\]
Moreover, we see  from \eqref{abkb-8-i}  that   $\mathbf r=(r_1,\ldots,r_{2l-1})$ are modular functions. 
Applying  Theorem  \ref{WRLS} to these four families, we have 
\[
\sum_{H\subseteq[t]}
\left(\sum_{S\in A_H}\mathbf q^{\mathbf r(S)}\right)
\left(\sum_{T\in B_{\overline{H}}}\mathbf q^{\mathbf r(T)}\right)
\le^{\mathbf q}
\sum_{H\subseteq[t]}
\left(\sum_{U\in C_H}\mathbf q^{\mathbf r(U)}\right)
\left(\sum_{V\in D_{\overline{H}}}\mathbf q^{\mathbf r(V)}\right),
\]
which, along with $\gamma_H=\nu_{\overline{H}}$,   is exactly \eqref{ghk-nk}.

Collecting the above  analysis in  Steps 1, 2 and 3, we conclude that for every fixed $\zeta$, $\Delta_\zeta$ in \eqref{xx-k0} is Schur positive. 
This enables us to complete  the proof.
\end{myproof}

\footnotesize{

\textsc{\{Peter L. Guo, Mingyang Kang, Jiaji Liu\} Center for Combinatorics, Nankai University, LPMC, Tianjin 300071, P.R. China}

{\it
Email address: \tt lguo@nankai.edu.cn, 2120240005@mail.nankai.edu.cn,\\ 1120240006@mail.nankai.edu.cn}

\end{document}